\theoremstyle{plain}
\newtheorem{theorem}{{Theorem}}[section]
\newtheorem{lemma}[theorem]{{Lemma}}
\newtheorem{proposition}[theorem]{{Proposition}}
\newtheorem{corollary}[theorem]{{Corollary}}
\newtheorem{remark}[theorem]{Remark}
\newtheorem{definition}[theorem]{{Definition}}
\newcommand{\R}{\mathbb{R}}
\newcommand{\Z}{\mathbb{Z}}
\newcommand{\N}{\mathbb{N}} 
\newcommand{\BF}{\mathbb{F}} 
\newcommand{\gl}{\mathfrak{gl}} 
\newcommand{\map}[1]{\xrightarrow{#1}}
\newcommand{\inv}{^{-1}}
\newcommand{\st}{\ \mid \ }
\newcommand{\idmap}{\mathrm{id}}  
\newcommand{\dual}{^\vee} 
\newcommand{\CC}{\mathcal{C}} 
\newcommand{\Kh}{\mathrm {Kh}}
\newcommand{\CKh}{\mathrm{CKh}} 
\newcommand{\BN}{\mathrm{BN}}
\newcommand{\KhR}{\mathrm{KhR}} 
\newcommand{\CKhR}{\mathrm{CKhR}} 
\newcommand{\unframedKhR}{\mathbf{KhR}} 
\newcommand{\newKh}{\overline{\Kh}} 
\newcommand{\newCKh}{\overline{\CKh}} 
\newcommand{\gVect}{\mathrm{gVect}} 
\newcommand{\ggVect}{\mathrm{ggVect}} 
\newcommand{\Lee}{\mathrm{Lee}} 
\newcommand{\Cob}{\mathbf{Cob}} 
\newcommand{\skein}{\mathcal{S}}
\newcommand{\BNcat}{\mathcal{BN}} 
\newcommand{\TLcat}{\mathcal{TL}} 
\newcommand{\BNbrack}[1]{\llbracket #1 \rrbracket_{\BN}} 
\newcommand{\KhRbrack}[1]{\llbracket #1 \rrbracket_{\KhR}} 
\newcommand{\FT}{\mathrm{FT}} 
\newcommand{\fraks}{\mathfrak{s}} 
\newcommand{
    \begin{tikzpicture}[scale=.35, baseline=-.67ex]
        
\begin{scope}[xshift=-.5cm, yshift=-.5cm]
\draw[->] (1,0) -- (0,1);
\fill[white] (.5,.5) circle (.2cm);
\draw[->] (0,0) -- (1,1);
\end{scope} 
    \end{tikzpicture}
}{
    \begin{tikzpicture}[scale=.35, baseline=-.67ex]
        
\begin{scope}[xshift=-.5cm, yshift=-.5cm]
\draw[->] (1,0) -- (0,1);
\fill[white] (.5,.5) circle (.2cm);
\draw[->] (0,0) -- (1,1);
\end{scope} 
    \end{tikzpicture}
}
\newcommand{
    \begin{tikzpicture}[scale=.35, baseline=-.67ex]
        
\begin{scope}[xshift=-.5cm, yshift=-.5cm]
\draw[->] (0,0) -- (1,1);
\fill[white] (.5,.5) circle (.2cm);
\draw[->] (1,0) -- (0,1);
\end{scope} 
    \end{tikzpicture}
}{
    \begin{tikzpicture}[scale=.35, baseline=-.67ex]
        
\begin{scope}[xshift=-.5cm, yshift=-.5cm]
\draw[->] (0,0) -- (1,1);
\fill[white] (.5,.5) circle (.2cm);
\draw[->] (1,0) -- (0,1);
\end{scope} 
    \end{tikzpicture}
}
\newcommand{
    \begin{tikzpicture}[scale=.35, baseline=-.67ex]
        
\begin{scope}[xshift=-.5cm, yshift=-.5cm]
\draw (1,0) -- (0,1);
\fill[white] (.5,.5) circle (.2cm);
\draw (0,0) -- (1,1);
\end{scope}
    \end{tikzpicture}
}{
    \begin{tikzpicture}[scale=.35, baseline=-.67ex]
        
\begin{scope}[xshift=-.5cm, yshift=-.5cm]
\draw (1,0) -- (0,1);
\fill[white] (.5,.5) circle (.2cm);
\draw (0,0) -- (1,1);
\end{scope}
    \end{tikzpicture}
}
\newcommand{
    \begin{tikzpicture}[scale=.35, baseline=-.67ex, rotate=-90]
        
    \end{tikzpicture}
}{
    \begin{tikzpicture}[scale=.35, baseline=-.67ex, rotate=-90]
        
    \end{tikzpicture}
}
\newcommand{\F}{\mathcal{F}} 
\newcommand{\gr}{\mathrm{gr}} 
\newcommand{\colim}{\mathrm{colim}\,}
\newcommand{\cone}{\mathrm{Cone}}
\newcommand{\Cone}{\cone} 
\newcommand{\Ext}{\mathrm{Ext}} 
\newcommand{\Ob}{\mathrm{Ob}} 
\newcommand{\Mor}{\mathrm{Mor}} 
\newcommand{\Mat}{\mathrm{Mat}} 
\newcommand{\Kom}{\mathrm{Kom}} 
\newcommand{\Kar}{\mathrm{Kar}} 
\newcommand{\Ind}{\mathrm{Ind}} 
\newcommand{\Tot}{\mathrm{Tot}} 
\newcommand{\symgroup}[1]
{\mathfrak{S}_{#1}} 
\newcommand{\symmetrized}[1]{\mathrm{Sym}(#1)} 
\newcommand{\hocolim}{\mathrm{hocolim}\,}
\newcommand{\acup}{
    \begin{tikzpicture}[scale=.125, baseline=-1ex]
    \draw[thick] (1,0) arc (0:-180:1);
    \foreach \x in {-1,1}{
        \draw[thick] (\x,0) -- (\x,1);
    }
    \end{tikzpicture}    
}
\newcommand{\acupdot}{
    \begin{tikzpicture}[scale=.125, baseline=-1ex]
    \draw[thick] (1,0) arc (0:-180:1);
    \foreach \x in {-1,1}{
        \draw[thick] (\x,0) -- (\x,1);
    }
    \node at (0,-1) {$\bullet$};
    \end{tikzpicture}    
}
\DeclareRobustCommand\robacupdot{\acupdot}
\newcommand{\acap}{
    \begin{tikzpicture}[yscale=-1, scale=.125, baseline=-1ex]
    \draw[thick] (1,0) arc (0:-180:1);
    \foreach \x in {-1,1}{
        \draw[thick] (\x,0) -- (\x,1);
    }
    \end{tikzpicture}    
}
\newcommand{\capcup}{
    \tikzset{every picture/.style={line width=0.75pt}} 
    \begin{tikzpicture}[x=0.75pt,y=0.75pt,yscale=-1,xscale=1]
    \draw    (325.75,19.58) .. controls (325.85,13.87) and (335.1,13.87) .. (334.89,19.4) ;
    \draw    (325.85,8.43) .. controls (325.75,14.78) and (335.2,14.69) .. (335,8.25) ;
    \end{tikzpicture}
}
\newcommand{\acapdot}{
    \begin{tikzpicture}[yscale=-1, scale=.125, baseline=-1ex]
    \draw[thick] (1,0) arc (0:-180:1);
    \foreach \x in {-1,1}{
        \draw[thick] (\x,0) -- (\x,1);
    }
    \node at (0,-1) {$\bullet$};
    \end{tikzpicture}    
}
\newcommand{\kap}{
    \begin{scope}[xscale=.5]
    \draw (0,1) arc (90:270:1cm);
    \draw[dotted] (0,1) arc (90:-90:1cm);
    \end{scope}
    \begin{scope}[xscale=1.5]
        \draw (0,1) arc (90:-90:1cm);
    \end{scope}
}
\newcommand{\kapdot}{
    \kap
    \node (dot) at (1,0) {$\bullet$};
}
\newcommand{\kup}{
    \begin{scope}[xscale=-.5]
    \draw (0,1) arc (90:270:1cm);
    \draw (0,1) arc (90:-90:1cm);
    \end{scope}
    \begin{scope}[xscale=-1.5]
        \draw (0,1) arc (90:-90:1cm);
    \end{scope}
}
\newcommand{\kupdot}{
    \kup
    \node (dot) at (-1,0) {$\bullet$};
}
\newcommand{\Id}{\mathrm{Id}}
\newcommand{\one}{\mathbf{1}}
\newcommand{\End}{\mathrm{End}}
\newcommand{\Hom}{\mathrm{Hom}}
\newcommand{\cl}[1]{\mathrm{Tr} \left ( #1 \right)} 
\newcommand{\close}{\mathrm{Tr}} 
\newcommand{
    \begin{tikzpicture}[scale=.3, baseline=-.67ex]
        
    \end{tikzpicture}
}{
    \begin{tikzpicture}[scale=.3, baseline=-.67ex]
        
    \end{tikzpicture}
}
\newcommand{
    \begin{tikzpicture}[scale=.3, baseline=-.67ex]
         
    \end{tikzpicture}
}{
    \begin{tikzpicture}[scale=.3, baseline=-.67ex]
         
    \end{tikzpicture}
}
\newcommand{
    \begin{tikzpicture}[scale=.3, baseline=-.67ex]
        
    \end{tikzpicture}
}{
    \begin{tikzpicture}[scale=.3, baseline=-.67ex]
        
    \end{tikzpicture}
}
\newcommand{
    \begin{tikzpicture}[scale=.3, baseline=-.67ex,rotate=-90]

\begin{scope}[xshift=-.5cm, yshift=-.5cm]
\draw (0,0) .. controls (.5, .5).. (0,1);
\draw (1,0) .. controls (.5, .5).. (1,1);
\end{scope}
    \end{tikzpicture}
}{
    \begin{tikzpicture}[scale=.3, baseline=-.67ex,rotate=-90]

\begin{scope}[xshift=-.5cm, yshift=-.5cm]
\draw (0,0) .. controls (.5, .5).. (0,1);
\draw (1,0) .. controls (.5, .5).. (1,1);
\end{scope}
    \end{tikzpicture}
}
\newcommand{
    \begin{tikzpicture}[scale=.3, baseline=-.67ex]
        
    \end{tikzpicture}
}{
    \begin{tikzpicture}[scale=.3, baseline=-.67ex]
        
    \end{tikzpicture}
}
\numberwithin{equation}{section}
\begin{document}

\title{Kirby belts, categorified projectors, and the skein lasagna module of $S^2 \times S^2$}



\emsauthor{1}{
	\givenname{Ian}
	\surname{Sullivan}
	\mrid{}
	\orcid{0000-0002-2731-251X}}{I.~A.~Sullivan}
\emsauthor{2}{
	\givenname{Melissa}
	\surname{Zhang}
	\mrid{1261780}
	\orcid{0000-0002-2750-2794}}{M.~Zhang}

\Emsaffil{1}{
	\department{Department of Mathematics}
	\organisation{University of California, Davis}
	\rorid{05rrcem69}
	\address{One Shields Ave.}
	\zip{95616}
	\city{Davis, CA}
	\country{U.S.A.}
	\affemail{iasullivan@ucadvis.edu}}
\Emsaffil{2}{
	\department{Department of Mathematics}
	\organisation{University of California, Davis}
	\rorid{05rrcem69}
	\address{One Shields Ave.}
	\zip{95616}
	\city{Davis, CA}
	\country{U.S.A.}
	\affemail{mlzhang@ucadvis.edu}}

\classification[57K41]{57K18}

\keywords{skein lasagna module, Khovanov homology, cabled Khovanov homology, categorified projector}

\begin{abstract}
We interpret Manolescu-Neithalath's cabled Khovanov homology formula for computing Morrison-Walker-Wedrich's $\KhR_2$ skein lasagna module as a homotopy colimit (mapping telescope) in a completion of the category of complexes over Bar-Natan's cobordism category.
Using categorified projectors, we compute the $\KhR_2$ skein lasagna modules of (manifold, boundary link) pairs $(S^2 \times B^2, \tilde \beta)$, where $\tilde \beta$ is a geometrically essential boundary link, identifying a relationship between the lasagna module and the Rozansky projector appearing in the Rozansky-Willis invariant for nullhomologous links in $S^2 \times S^1$. 
As an application, we show that the $\KhR_2$ skein lasagna module of $S^2 \times S^2$ is trivial, confirming a conjecture of Manolescu.
\end{abstract}

\maketitle


\section{Introduction}

In recent years, link homology theories have proven to be useful tools in the study of smooth 4-manifolds. For example, Khovanov homology classes can distinguish pairs of exotic slice disks for certain knots \cite{hay-sun}, and Rasmussen's $s$-invariant \cite{RasInv} gives bounds on a knot's $4$-ball genus. 

Morrison-Walker-Wedrich's \emph{skein lasagna modules} are a promising new tool; in \cite{MWW-lasagna}, the authors describe a method that extends link homology theories for links in $S^{3}$ to diffeomorphism invariants of a pair $(W,L\subset{\partial{W}})$, where $W$ is a $4$-manifold with a link $L$ in its boundary. Methods for computing skein lasagna modules were developed by Manolescu-Neithalath \cite{MN22} (for 2-handlebodies) and extended by Manolescu-Walker-Wedrich \cite{MWWhandles}. 
The hope is that, by improving the tools for computing lasagna modules, these 4-manifold invariants based in quantum topology will be able to distinguish smooth structures in cases where gauge-theoretic invariants (e.g.\ Seiberg-Witten invariants) may be incomputable or inconclusive.

In this paper, we develop and employ novel computational techniques and compute $\skein_{0}^{2}$ for some new $(W,L)$ pairs. Note that $\skein_{0}^{2}(W;L)$ is an $H_{2}(W,L;\mathbb{Z})\times{\mathbb{Z}}\times{\mathbb{Z}}$-graded invariant, where the $H_{2}(W,L;\mathbb{Z})$ grading is called the \emph{homological level} (see Section \ref{subsec:skeinlasagnamodules} for more details). 
Throughout, we work over a field
$\mathbb{F}$ of characteristic 0, and we denote the skein lasagna module of a pair $(W,L)$ at homological level $\alpha$ by $\skein_{0}^{2}(W;L,\alpha)$.
Let $A_0$ and $A_1$ be formal variables with $q$-degree $0$ and $-2$ respectively, and
let $\mathbb{F}_{|\alpha|}[A_{0},A_{0}^{-1},A_{1}]$ denote the subgroup of $\mathbb{F}[A_{0},A_{0}^{-1},A_{1}]$ consisting of degree $\alpha$ homogeneous polynomials in $A_0$, $ A_0\inv,$ and $A_1$. 
Manolescu and Neithalath in \cite{MN22} show that $\skein_{0}^{2}(S^{2}\times{B^{2}};\emptyset,\alpha)$ is isomorphic to $\mathbb{F}_{|\alpha|}[A_{0},A_{0}^{-1},A_{1}]$. 

Let $P_{n,k}$ denote the \emph{higher order projectors} introduced in \cite{Cooper-Hog-family}, where $P_{n}=P_{n,n}$ is the $n$th \emph{categorified Jones-Wenzl projector}, and $P_{n,0}$ is the $n$th \emph{Rozansky projector} (see Section \ref{sec:projectors} for more details). For any projector $P_{n}$, let $P_{n}^{\vee}$ denote the corresponding dual projector.
Letting $\widetilde{\one}_{n}$ denote the \emph{geometrically essential} $n$-component link in $\partial{S^{2}\times{B^{2}}}=S^{1}\times{S^{2}}$ (see Figure \ref{fig:geom-essen}), we prove the following.

\begin{theorem}\label{thrm:S2xB2thrm}
    Let $\alpha\in{H_{2}(S^{2}\times{B^{2}};\Z)}\cong{\mathbb{Z}}$. Then
    \begin{equation}
            \skein^{2}_{0}(S^{2}\times{B^{2};\widetilde{\one}_{n},\alpha)\cong}
        \begin{cases}
            0 & \text{if $\alpha$ or $n$ is odd.}\\
            \mathbb{F}_{|\alpha|}[A_{0},A_{0}^{-1},A_{1}]\otimes{\KhR_{2}(\close(P^{\vee}_{n,0}))} & \text{if $\alpha$ and $n$ are even.}
        \end{cases}
    \end{equation}
    Here, $\close(P^{\vee}_{n,0})$ denotes the trace of the dual Rozansky projector on $n$ strands, when $n$ is even.
\end{theorem}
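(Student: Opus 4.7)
The plan is to realize $\skein^2_0(S^2 \times B^2; \widetilde{\one}_n, \alpha)$ as the homology of the trace-closure of a Kirby-belted identity braid, then analyze that homotopy colimit using the properties of higher-order projectors established earlier in the paper. The first step is to apply the Manolescu-Neithalath $2$-handlebody formula to a Kirby diagram of $S^2 \times B^2$ consisting of a single $0$-framed unknot $U$, expressing $\skein^2_0(S^2 \times B^2; \widetilde{\one}_n, \alpha)$ as a colimit of symmetrized $\KhR_2$ groups of cabled links $U^{(k)} \cup \widetilde{\one}_n \subset S^3$ with dotted-annulus cobordism morphisms. Identifying this colimit with $\KhR_2$ of the trace of the Kirby-belted identity braid $T_n^{\Omega_i}$, for $i$ of parity dictated by $\alpha$, reduces the problem to analyzing the trace of $T_n^{\Omega_i}$.

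The vanishing cases are handled by a combination of a parity obstruction and vanishing properties of the Rozansky projector. When $n$ is odd, use the fact that $\symmetrized{T_n^{\otimes k}}$ factors through the higher-order projector $P_{n,0}$ in the homotopy category; since $\close(P_{n,0}) \simeq 0$ at odd $n$ (paralleling the classical vanishing $\close(P_n) \simeq 0$ for Jones-Wenzl projectors, and traceable to the defining through-degree property of the higher-order projector), the $\KhR_2$-trace of $T_n^{\Omega_i}$ is contractible for every $\alpha$. When $n$ is even but $\alpha$ is odd, the dotted-cup maps $\symmetrized{f}$ increase the Kirby-belt strand count by $2$, so every term in the directed system has cable count of fixed parity matching $n$; the $\alpha$-graded sector at mismatched parity is empty in the homotopy colimit, hence zero after taking $\KhR_2$.

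For the even case, identify $\colim_k \symmetrized{T_n^{\otimes k}}$, as a chain complex, with one modeling the Rozansky projector $P_{n,0}$ inserted on the $n$ strands of $T_n$. This identification follows from the through-degree-$0$ defining property of $P_{n,0}$, together with the algebraic properties of Grigsby-Licata-Wehrli symmetrization used in the construction of $T_n^{\Omega_i}$. The Kirby belt itself, decoupled from the $n$-strand insertion, realizes $\skein^2_0(S^2 \times B^2; \emptyset) \cong \mathbb{F}[A_0, A_0^{-1}, A_1]$. Passing to the trace and applying $\KhR_2$ then splits the resulting homology as a tensor product of the \emph{internal} factor $\KhR_2(\close(P_{n,0}))$ with the \emph{external} pure Kirby-belt factor $\mathbb{F}_\alpha[A_0, A_0^{-1}, A_1]$ at homological level $\alpha$. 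The main obstacle will be rigorously establishing this tensor-product splitting at the level of homotopy colimits: justifying the commutation of symmetrization, trace, and the $\Omega_i$-colimit requires the homotopy-coherent machinery around Kirby belts and categorified projectors developed earlier, with careful bookkeeping of the $\alpha$-grading across the identifications.
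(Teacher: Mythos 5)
Your high-level framing---trace the Kirby-belted identity braid, reduce to the homotopy colimit $T_n^{\Omega_\alpha}$ via Proposition \ref{prop:everythingthesame}, then analyze it with projector machinery---matches the paper's approach. However, there are genuine gaps in the way you carry out the parity analysis, and you omit the technical result that actually makes the argument run.

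The most serious problem is your treatment of the odd-$n$ case. You assert that ``$\symmetrized{T_n^{\otimes k}}$ factors through the higher-order projector $P_{n,0}$'' and that ``$\close(P_{n,0}) \simeq 0$ at odd $n$.'' Neither of these is what happens. The Rozansky projector $P_{n,0}$ is a through-degree-$0$ object and only arises for even $n$; for odd $n$ the resolution of the identity \eqref{eq:resofidentity} begins at $P_{n,1}$, not $P_{n,0}$. Moreover, the identity braid does not ``factor through'' any single $P_{n,k}$---it is homotopy equivalent to an entire twisted complex built from all the $P_{n,k}$ of the correct parity. The paper's actual mechanism is Proposition \ref{prop:projectorkirbyprop} ($P_n \otimes T_n^{\Omega_\alpha} \simeq 0$ for $n > 0$, proved via the explicit splitting of $\symmetrized{P_n\otimes T_n^{\otimes k}}$ and the $\Ext$ computations of Lemma \ref{lem:hogextgroups}), combined with Corollaries \ref{cor:KirbyBSwap} and \ref{cor:PnkkillsTnOmega} showing that $P_{n,k} \otimes T_n^{\Omega_\alpha} \simeq 0$ for all $0 < k \leq n$, since $P_{n,k}$ factors through $P_k$. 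When $n$ is odd, \emph{every} term in the resolution of the identity is then annihilated, giving $T_n^{\Omega_\alpha} \simeq 0$; this is Proposition \ref{prop:kirbybeltcontractible}. Your proposal never invokes this annihilation result, which is the technical core of the whole paper, and your parenthetical claim that ``$\close(P_n) \simeq 0$'' for Jones--Wenzl projectors is false as stated (the decategorified trace of $P_n$ is the quantum integer $[n+1]$).

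Your sketch of the even case points in the right direction but skips the two ingredients that justify it. First, for $n$ even the resolution of the identity together with Corollary \ref{cor:PnkkillsTnOmega} gives $T_n^{\Omega_\alpha} \simeq P_{n,0} \otimes T_n^{\Omega_\alpha}$ (you cannot simply ``identify $\colim_k \symmetrized{T_n^{\otimes k}}$ with $P_{n,0}$''---these are different objects). Second, the tensor splitting you anticipate requires the slide-off Theorem \ref{thrm:kirbyslideoff}, which says that for a through-degree-$0$ complex $A$ one has $A \otimes T_n^{\Omega_\alpha} \simeq A \sqcup U^{\Omega_\alpha}$; this is what decouples the $P_{n,0}$ factor from the pure belt factor $U^{\Omega_\alpha}$. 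You flag ``commutation of symmetrization, trace, and the colimit'' as the main obstacle, but the more specific missing step is the through-degree-$0$ slide-off, which is what the paper establishes via split cobordisms and the twisted-complex totalization of Proposition \ref{lem:tot=twistedcomplex}.

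Finally, for the ``$n$ even, $\alpha$ odd'' case, your parity argument is confused: you say the belt strand count has ``fixed parity matching $n$,'' but the parity of the belt cable count is governed by $\alpha$, not by $n$. The mechanism you describe does not yield vanishing here.
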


An expected property for any diffeomorphism invariant of $4$-manifolds with a connect sum formula is some notion of triviality on $S^{2}\times{S^{2}}$. As, given an exotic pair $(X_{1},X_{2})$, the manifolds $X_{1}\#{r}(S^{2}\times{S^{2}})$ and $X_{2}\#{r}(S^{2}\times{S^{2}})$ are diffeomorphic for a sufficiently large $r>0$ by Wall's stabilization theorem \cite{CTCWall}. Over a field of characteristic 0, the skein lasagna module is such an invariant. Our Theorem \ref{thrm:S2xB2thrm} admits the following corollary.

\begin{corollary}\label{cor:main}
We have that
    \begin{equation}
        \skein^{2}_{0}(S^{2}\times{S^{2}};\emptyset,(\alpha_{1},\alpha_{2}))\cong{0}
    \end{equation}
for every homological level $(\alpha_{1},\alpha_{2})\in{H_{2}(S^{2}\times{S^{2}}})$.    
\end{corollary}

This confirms a conjecture of Ciprian Manolescu \cite{manolescu-ictp-video}. 
This result was also simultaneously and independently proven by Ren-Willis in \cite{ren2024khovanov} using different techniques from ours.

We prove Theorem \ref{thrm:S2xB2thrm} by proving algebraic properties about homotopy colimits of directed systems of chain complexes associated to cablings of specific tangle diagrams. These homotopy colimits, called \emph{Kirby belts} or \emph{Kirby-belted tangles}, model the skein lasagna module of $(S^{2}\times{B^{2}}; \widetilde{L})$. 

Let $\KhR_{2}(L)$ denote the $\mathfrak{gl}_{2}$ Khovanov-Rozansky homology group of a framed, oriented link $L$. By the Manolescu-Neithalath $2$-handlebody formula \cite[Theorem 1.1]{MN22}, the skein lasagna module $\skein_{0}^{2}(S^{2}\times{S^{2}};\emptyset,(\alpha_{1},\alpha_{2}))$ is isomorphic to the \textit{cabled} Khovanov homology $\underline{\KhR}_{2,(\alpha_{1},\alpha_{2})}(L)$, where $L$ is the oriented Hopf link with $0$-framing on both components. After fixing a homological level $(\alpha_{1},\alpha_{2})$, computing $\skein_{0}^{2}(S^{2}\times{S^{2}};\emptyset,(\alpha_{1},\alpha_{2}))$ amounts to computing the colimit of a \emph{cabling directed system} (see Figure \ref{fig:mountain}). Roughly, a cabling directed system for $\KhR_{2}$ is a directed system of symmetrized $\KhR_{2}$ vector spaces associated to a directed system given by a cabling pattern and (dotted) annulus cobordisms between cables (see Definition \ref{def:CDS}). The Hopf link has two components, so the corresponding cabling directed system is $2$-dimensional; we consider the two directions in the cabling directed system individually and compute their colimits. 

\begin{figure}[t]
    \begin{center}
        \includegraphics[width=\linewidth]{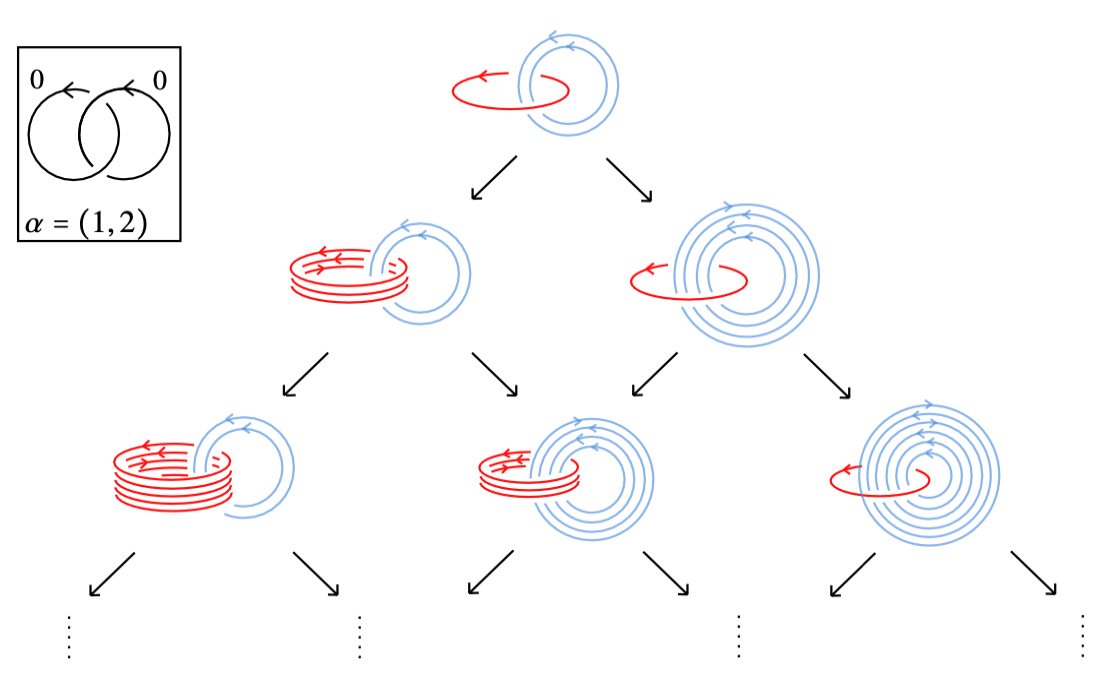}
    \end{center}
    \caption{An illustration of the cabling directed system 
    $\mathcal{B}(S^{2}\times{S^{2}};\emptyset,(\alpha_{1},\alpha_{2}))$ for the cabled Khovanov homology of the $(0,0)$-framed Hopf link in homological level $(1,2)\in{H_{2}(S^{2}\times{S^{2}};\mathbb{Z})}$. Each link diagram represents the symmetrized $\KhR_{2}$ homology group of said cabled link. The arrows correspond to dotted annulus cobordism maps, increasing the number of link components according to the direction of the arrow.}
    \label{fig:mountain}
\end{figure}

We construct a homotopy colimit whose homology is isomorphic to the skein lasagna module of the pair $(S^{2}\times{B^{2}};\widetilde{\one}_{n})$. We begin our construction by considering tangle diagrams and corresponding complexes of the form shown below.
\begin{center}
    \includegraphics[width=1.5in]{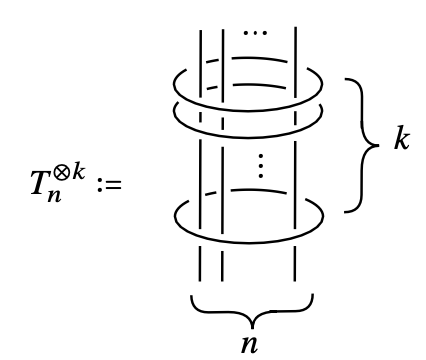}
\end{center}

Let $\symmetrized{T_{n}^{\otimes{k}}}$ denote the symmetrized complex associated to $T_{n}^{\otimes{k}}$ under Grigsby-Licata-Wehrli's braid group action \cite{GLW-schur-weyl}. Then the \emph{Kirby-belted identity braid} on $n$ strands is defined as the following colimit, denoted $T_{n}^{\omega_{i}}$:

\begin{equation}
\raisebox{-0.67cm}{\includestandalone{tikz-imgs/TnKirby}} := \colim{\left(\symmetrized{T_{n}^{\otimes{i}}}\xrightarrow{\mathrm{Sym}\left({\acupdot}\right)}{\symmetrized{T_{n}^{\otimes{(i+2)}}}}\xrightarrow{\mathrm{Sym}\left(\acupdot\right)}{\symmetrized{T_{n}^{\otimes{(i+4)}}}}\rightarrow{\cdots}\right)}
\label{eq:omegakirbybelt}
\end{equation}

The \emph{Kirby belt} is the colored unknotted component, decorated with a label $\omega_{i}$. This $\omega_{i}$ labelling was chosen to reference the Kirby objects and Kirby-colored Khovanov homology of \cite{hogancamp2022kirby}; our constructions are similar to theirs, but we do not work in the annular setting (see Remark \ref{rmk:annularkirby}). To make our computations explicit, we choose instead to study the homotopy colimit of the directed system in \eqref{eq:omegakirbybelt}. This homotopy colimit, denoted $T_{n}^{\Omega_{i}}$, is the total complex of the diagram shown in \eqref{eq:1}. The $[i]$ denotes a shift in homological degree, and $f$ denotes the dotted annulus map as in Equation \eqref{eq:omegakirbybelt}. 

\begin{figure}[t]
\begin{center}
\begin{equation}
\label{eq:1}
\begin{tikzcd}
    \symmetrized{T_{n}^{\otimes{i}}}[0]
    & \symmetrized{T_{n}^{\otimes{(i+2)}}}[0] 
        & \symmetrized{T_{n}^{\otimes{(i+4)}}}[0] 
        & \text{\phantom{A4cell}}  \\
    \symmetrized{T_{n}^{\otimes{i}}}[-1] \arrow{u}{\idmap} \arrow{ur}{-\symmetrized{f}}
        & \symmetrized{T_{n}^{\otimes{(i+2)}}}[-1] \arrow{u}{\idmap} \arrow{ur}{-\symmetrized{f}}
        & \symmetrized{T_{n}^{\otimes{(i+4)}}}[-1] \arrow{u}{\idmap} \arrow{ur}{-\symmetrized{f}}
        & \\
\end{tikzcd}
    \cdots
\end{equation}
\end{center}
\end{figure}

In the Ind-completion of the homotopy category of $\Kom(\TLcat_{n})$, these two notions of colimits agree, so using the complex $T_{n}^{\Omega_{i}}$ in place of $T_{n}^{\omega_{i}}$ is legitimate.
Observe that the trace of $T_{n}^{\otimes{k}}$ is the Khovanov-Rozansky complex of a cable of the Hopf link; this allows us to compute the homology of the trace of 0-framed Kirby-belted diagrams.
Using properties of the Kirby-belted identity braid, we are able to prove Theorem \ref{thrm:S2xB2thrm}.

Observe that the vector spaces $\skein_{0}^{2}(S^{2}\times{B^{2}};\widetilde{\one}_{n},\alpha)$ are the colimits of `diagonals' of the cabling directed system of $S^{2}\times{S^{2}}$. Hence, the skein lasagna module of $S^{2}\times{S^{2}}$ can be realized as a colimit of a directed system involving skein modules of the form $\skein_{0}^{2}(S^{2}\times{B^{2}};\widetilde{\one}_{n})$. This observation, along with Theorem \ref{thrm:S2xB2thrm}, yields the desired vanishing result in Corollary \ref{cor:main}.

The main technical foundation for Theorem \ref{thrm:S2xB2thrm} is the following collection of properties about a Kirby-belted categorified Jones-Wenzl projector $P^{\vee}_{n}$.

\begin{proposition}\label{prop:main}
    Let $P^{\vee}_{n}$ denote the dual $n$th categorified Jones-Wenzl projector, and let $n>0$. Then 
    \[ P^{\vee}_{n}\otimes{T_{n}^{\omega_{\alpha}}}:=
    \colim{\left(\symmetrized{P^{\vee}_{n} \otimes{T_{n}^{\otimes{\alpha}}}}\xrightarrow{\idmap_{P^{\vee}_{n}}\otimes{\mathrm{Sym}{\left(\rule{0mm} {0.85mm}\acupdot\right)}}} \symmetrized{P^{\vee}_{n}\otimes{T_{n}^{\otimes{(\alpha+2)}}}}\xrightarrow{\idmap_{P^{\vee}_{n}}\otimes{\mathrm{Sym}\left(\rule{0mm}{0.85mm}\acupdot\right)}}\ldots\right)}
    \]
    is 0 for $\alpha\in{\{0,1\}}$. Similarly, $P^{\vee}_{n}\otimes{T_{n}^{\Omega_{\alpha}}}\simeq{0}$ for $\alpha\in{\{0,1\}}$.
\end{proposition}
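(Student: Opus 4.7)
My proposal is to combine the absorption properties of the categorified Jones-Wenzl projector $P_n$ with a direct contracting-homotopy argument on the telescope defining $T_n^{\Omega_i}$. Since the excerpt notes that $T_n^{\omega_i}$ and $T_n^{\Omega_i}$ agree in the Ind-completion of $\Kom(\TLcat_n)$, it suffices to prove $P_n \otimes T_n^{\Omega_i} \simeq 0$, and then transfer the result.

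The first step is to understand each term $P_n \otimes \symmetrized{T_n^{\otimes k}}$ appearing in the telescope. Using the defining absorption property of $P_n$---that it annihilates any Temperley-Lieb tangle of through-degree less than $n$---I would identify the closed belt components in $T_n^{\otimes k}$ (viewed after symmetrization under the Grigsby-Licata-Wehrli action) and show that, once the Kirby belt loops are slid across $P_n$, they produce turnbacks that $P_n$ absorbs. The expected upshot is that $P_n \otimes \symmetrized{T_n^{\otimes k}}$ reduces to $P_n$ itself, possibly tensored with a Khovanov complex of closed components and subjected to explicit grading shifts.

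The second step is to describe the connecting maps $\idmap_{P_n} \otimes \symmetrized{\acupdot}$ after this reduction. I expect them to become identifiable with concrete morphisms between copies of $P_n$---essentially dot multiplications or $q$-shifts. The restriction $i \in \{0,1\}$ should arise here from matching grading shifts: these are exactly the values of $i$ for which the telescope of the simplified connecting maps is contractible, say because the shifts cancel in a way reminiscent of a geometric series collapse. In this reduced form, I would build a compatible sequence of explicit null-homotopies for the connecting maps.

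Finally, I would use the total-complex description of $T_n^{\Omega_i}$ given in the excerpt---whose differential is built out of identity vertical maps and $-\symmetrized{f}$ diagonal maps---to assemble the null-homotopies for the connecting maps into a contracting homotopy for $P_n \otimes T_n^{\Omega_i}$. The main technical obstacle is controlling the interaction between the GLW symmetrization and absorption by $P_n$: since the symmetrization is itself a homotopy colimit over the relevant symmetric/braid group action, the argument requires interchanging colimits and carefully tracking compatibility of homotopies at each stage. A secondary subtlety is verifying rigorously that exactly the parities $i=0$ and $i=1$ make the telescope collapse, which I expect to reflect the fact that $T_n^{\otimes i}$ has the minimum number of belts of each parity needed for the absorption-plus-telescope argument to close up.
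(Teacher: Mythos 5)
Your broad strategy---reduce each term of the telescope to copies of $P_n$, identify the connecting maps, and then contract---points in the right direction, but several of the claimed mechanisms are wrong or too vague, and one of your intermediate conclusions is simply false.

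First, in your reduction of $P_n \otimes \symmetrized{T_n^{\otimes k}}$ you say the belt loops can be ``slid across $P_n$'' to produce turnbacks that $P_n$ absorbs. That is not what happens. The slide-off argument (Theorem \ref{thrm:kirbyslideoff}) requires a \emph{through-degree zero} complex, and $P_n$ has full through degree $n$ for $n > 0$. What actually occurs is an application of \cite[Corollary 3.51]{Hog-polyn-action}: $P_n \otimes T_n$ splits as $P_n \oplus h^{-2n}q^{2n+1}P_n$, and iterating and symmetrizing gives Equation~\eqref{eq:17}, a direct sum of $k{+}1$ graded shifts of $P_n$. No closed Khovanov complexes or turnbacks appear.

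Second, you claim that $i\in\{0,1\}$ is singled out because ``these are exactly the values of $i$ for which the telescope collapses,'' via some grading cancellation. This misreads the statement. The argument goes through verbatim for every starting index $\alpha \geq 0$; the colimit depends only on the parity of $\alpha$, so $\{0,1\}$ is just a complete set of parity representatives, not a constraint that has to be ``reflected'' by the proof. If your argument only worked for $i=0,1$ but not for, say, $i=2$, that would in fact be a red flag.

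Third---and this is the essential gap---you do not identify the actual nilpotence mechanism and you propose to null-homotope the wrong maps. The connecting map $\symmetrized{\idmap_{P_n}\otimes\acupdot}$ is \emph{not} null-homotopic; using Lemma \ref{lem:hogextgroups} it is determined (up to scalar) to be the generator $U_1^{(n)}$ of $\Ext^{0,2}(P_n)\cong\Z$, concentrated on a single diagonal of the decomposition. The proof works because consecutive composites $\symmetrized{\acupdot}\circ\symmetrized{\acupdot}$ vanish up to homotopy, after which Proposition \ref{prop:contractiblehocolim} (not a bespoke contracting homotopy) immediately gives contractibility of the homotopy colimit. Building ``compatible null-homotopies for the connecting maps'' would fail at the first step, and the ``geometric series collapse'' intuition does not describe what is going on. The content of the proof is in pinning down the $\Ext$-group of $P_n$ (which requires Hogancamp's explicit computation) and showing that the induced directed-system map squares to zero; without that input there is no argument.
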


\begin{figure}[t]
    \centering
    \includegraphics[width=1.2in]{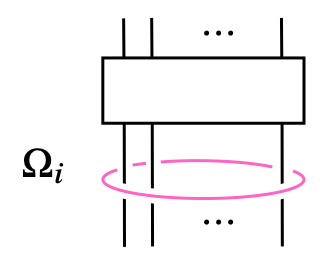}
    \caption{The complex $P^{\vee}_{n}\otimes{T^{\Omega_{i}}_{n}}$ above is contractible when $n>0$. We label the belt component with $\Omega_{i}$ when referring to the homotopy colimit.}
    \label{fig:PnTnOmega}
\end{figure}

In other words, $P^{\vee}_{n}$ annihilates $T_{n}^{\omega_{\alpha}}$ (see Figure \ref{fig:PnTnOmega}). Proposition \ref{prop:main} is proven by determining the complexes $\symmetrized{P^{\vee}_{n}\otimes{T_{n}^{\otimes{k}}}}$ and symmetrized maps $\symmetrized{\acupdot}$ explicitly, then arguing for contractibility using techniques from homological algebra. We then use the \emph{resolution of the identity} of \cite[Theorem 7.4]{Cooper-Hog-family} to relate $T^{\omega_{\alpha}}_{n}$ to a complex with chain groups of the form $P^{\vee}_{n,k}\otimes{T_{n}^{\omega_{\alpha}}}$, where $P^{\vee}_{n,k}$ is the dual $k$th \emph{higher order projector} on $n$ strands. We show that these $P^{\vee}_{n,k}\otimes{T_{n}^{\omega_{\alpha}}}$ terms are trivial for certain values of $n$ and $k$, and the next result follows.

\begin{theorem}
    If $n$ is an odd positive integer, then $T_{n}^{\Omega_{\alpha}}\simeq{0}$. If $n$ is an even positive integer, then $T_{n}^{\Omega_{\alpha}}$ is chain homotopy equivalent to the complex associated to ${U^{\Omega_{\alpha}}}\sqcup{P^{\vee}_{n,0}}$, where $U^{\Omega_{\alpha}}$ denotes a Kirby belt that is not linked with any strands. 
\end{theorem}
 
To prove the results involving Rozansky projectors (see \cite{WillisS1xS2} for more details) in the above theorems, we require a `sliding-off' property for the Kirby-colored belt wrapped around a complex associated to a through-degree $0$ tangle diagram. 
Let $A$ be a complex consisting of through-degree $0$ flat tangles. We show that $A\otimes{T_{n}^{\Omega_{\alpha}}}\simeq{A\sqcup{T_{n}^{\Omega_{\alpha}}}}$ as shown in Figure \ref{fig:kirbyslideoff}.

\begin{figure}[t]
    \includegraphics[width=.9\linewidth]{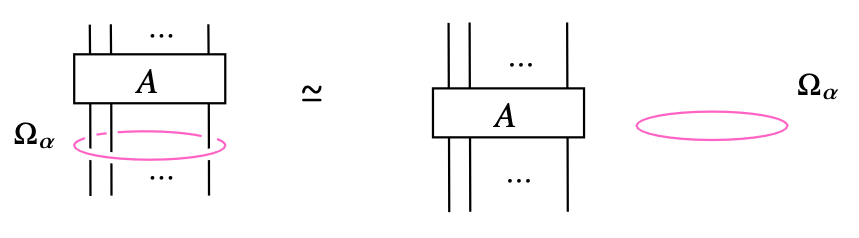}
    \caption{A Kirby belt `slides through' a through-degree-0 complex $A$.}
    \label{fig:kirbyslideoff}
\end{figure}

This result allows us to characterize the skein lasagna modules given in Theorem \ref{thrm:S2xB2thrm} and Theorem \ref{cor:main} for even integer-valued homological levels. There is a natural extension to other tangles wearing a Kirby-colored belt, denoted $T\otimes{T_{n}^{\Omega_{\alpha}}}$, for any $n$-strand tangle $T$.

\begin{corollary}
    If $n$ is an odd positive integer, then $T\otimes{T_{n}^{\Omega_{\alpha}}}\simeq{0}$. 
\end{corollary}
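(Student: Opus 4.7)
The plan is to deduce this directly from the preceding theorem via functoriality of the tensor product. By definition, the object $T \otimes T_n^{\Omega_\alpha}$ is obtained by stacking $T$ atop each term $\symmetrized{T_n^{\otimes k}}$ in the telescope defining $T_n^{\Omega_\alpha}$; that is, it is the total complex of
\begin{equation*}
T\otimes \symmetrized{T_n^{\otimes \alpha}} \xrightarrow{\idmap_T \otimes \symmetrized{\acupdot}} T\otimes \symmetrized{T_n^{\otimes(\alpha+2)}} \xrightarrow{\idmap_T \otimes \symmetrized{\acupdot}} \cdots
\end{equation*}
This is precisely the image of the defining diagram of $T_n^{\Omega_\alpha}$ under the additive, chain-map-preserving functor $T \otimes (-) : \Kom(\TLcat_n) \to \Kom(\TLcat_n)$.

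Since $T \otimes (-)$ is additive and the mapping telescope is built using direct sums together with a differential assembled from the connecting maps and identities, the functor $T \otimes (-)$ commutes with the formation of the telescope up to isomorphism. In particular, $T \otimes T_n^{\Omega_\alpha}$ is canonically isomorphic to the complex obtained by applying $T \otimes (-)$ level-wise to $T_n^{\Omega_\alpha}$, and any null-homotopy $h$ of $\idmap_{T_n^{\Omega_\alpha}}$ induces a null-homotopy $\idmap_T \otimes h$ of $\idmap_{T \otimes T_n^{\Omega_\alpha}}$.

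By the preceding theorem, for odd positive $n$ the complex $T_n^{\Omega_\alpha}$ is contractible, so such an $h$ exists; hence $T \otimes T_n^{\Omega_\alpha} \simeq 0$. The only point requiring care is the verification that the hocolim/tensor interchange is legitimate in the categorical setup used in the paper. Since the indexing diagram is a linear sequence and $T \otimes (-)$ sends direct sums to direct sums on the nose, this is a routine check rather than a substantive obstacle: the main conceptual work has already been done in the preceding theorem, which itself rests on Proposition \ref{prop:main} together with the resolution of the identity of \cite{Cooper-Hog-family}.
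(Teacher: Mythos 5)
Your argument is correct and is essentially what the paper intends: the corollary follows from Proposition \ref{prop:kirbybeltcontractible} ($T_n^{\Omega_\alpha} \simeq 0$ for odd $n$) by applying the additive dg-functor $T \otimes (-)$, which commutes with the telescope construction (the paper's Proposition \ref{prop:kirbycone} and Lemma \ref{lem:forSwapsandSlides} formalize precisely this interchange) and carries null-homotopies to null-homotopies. You correctly identify that the substantive content lives in the preceding theorem, and the routine check you flag --- that $T\otimes(-)$ preserves the infinite direct sums and differentials in $\Tot(\mathcal{D}_{\mathcal{A}_n}^\alpha)$ --- is indeed routine since $T$ is a bounded complex of finite sums of planar tangles.
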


\begin{remark}
Due to convention differences, our dual projectors $P^\vee$ algebraically behave the same as projectors $P$ in \cite{WillisS1xS2} (up to a formal variable change $q \leftrightarrow q\inv$). 
In particular, our methods show that, for a nullhomologous link in $S^2\times S^1$, viewed as the geometrically essential closure $\widetilde{T}$ of an $(2k,2k)$ tangle $T$ (see Figure \ref{fig:geom-essen}), the skein lasagna module $\skein_0^2(S^2 \times D^2; \widetilde{T})$ is isomorphic to the tensor product of $\KhR(\cl{P_{2k,0}^\vee \otimes T})$ and the skein lasagna module of $S^2 \times D^2$, which was computed in \cite{MN22}. 
This result relates the skein lasagna module of a nullhomologous link in the boundary of $S^2 \times D^2$ with its Rozansky--Willis invariant, since 
$\KhR(\cl{P_{2k,0}^\vee \otimes T})$ is effectively the Rozansky--Willis invariant.
This result can be extended to nullhomologous links in $\#^n S^2 \times S^1$.
\end{remark}

\subsection{Organization}

Sections \ref{sec:algprelim} and \ref{sec:BN-cats} contain conventions and the necessary definitions and background theorems about homotopy colimits and Khovanov-Rozansky homology. Section \ref{sec:lasagnabackground} contains background on skein lasagna modules and cabling. Section \ref{sec:mainsec} presents the main results.

\section{Algebraic preliminaries}\label{sec:algprelim}

In this section we compile and prove algebraic results that we reference in the remainder of this work. 
Throughout, let $\mathbb{F}$ be a field of characteristic $0$. Let $\gVect$ and $\ggVect$ denote the categories of singly graded and bigraded vector spaces over $\BF$, respectively.

\subsection{Category theory preliminaries}
{
We first briefly discuss some useful constructions from the category theory of linear graded categories, including the Karoubi envelopes and Ind-completions, both of which are treated in Section 2 of \cite{hogancamp2022kirby}.
For more on Karoubi envelopes and homotopy idempotents, a useful reference is Appendix A of \cite{Gorsky-Wedrich}; see also the footnote under Definition 3.1 in \cite{BN-Morrison} for a brief history.
For more on Ind-objects and Ind-completions, see \cite{Kashiwara-Schapira-categories-sheaves}, Definition 6.1.
Finally, for an introduction to dg categories and twisted complexes, see \cite{BKeller-Ainfinity}.
}

\begin{definition}
    Let $G$ be an abelian group, and let $\CC$ be a $G$-graded $\mathbb{F}$-linear category. Then the \emph{$G$-additive completion} of $\CC$, denoted $\Mat(\CC)$, is the category whose objects are finite formal direct sums of objects in $\CC$, and each morphism $f:\bigoplus_{i=1}^{n}A_{i}\rightarrow{\bigoplus_{j=1}^{m}B_{i}}$ is given by an $m\times{n}$ matrix of morphisms $f_{ij}:A_{i}\rightarrow{B_{j}}$ in $\CC$.
\end{definition}

Unless specified otherwise, let $\mathcal{C}$ be a $\mathbb{Z}\oplus{\mathbb{Z}}$-graded $\mathbb{F}$-linear category. The additive completion of such a category formally adjoins grading shifts and finite sums, but we also need a completion that formally adjoins images of idempotent maps.

\begin{definition}
    The \emph{(graded) Karoubi envelope} of $\CC$, denote $\Kar(\CC)$, is the category whose objects are pairs $(A,e_{A})$, where $A$ is an object of $\CC$ and $e_{A}\in{\End_{\CC}^{0}}(A)$ is idempotent, i.e. $e_{A}^{2}=e_{A}$. The morphisms are given by maps $f\in{\Hom_{\CC}(A,B)}$ such that $f=e_{B}\circ{f}\circ{e_{A}}$.
\end{definition}

The colimits we study will be of the following form.

\begin{definition}
    A \emph{directed system} in $\CC$ is a diagram in $\CC$ indexed by a filtered small category. Furthermore, a \emph{filtered colimit} of $\CC$ is a colimit of a directed system in $\CC$.
\end{definition}

The following completion formally adjoins filtered colimits.

\begin{definition}[\cite{hogancamp2022kirby}, Def.\ 2.22]
    The \emph{Ind-completion} of $\CC$ (denoted $\Ind{(\CC)}$) is the category whose objects are directed systems $\alpha:\mathcal{I}\rightarrow{\CC}$ (where $\mathcal{I}$ is a directed indexing set). Given objects $\alpha:\mathcal{I}\rightarrow{\CC}$ and $\beta:\mathcal{J}\rightarrow{\CC}$, the morphism set is given by
    \[
    \Hom_{\Ind{(\CC)}}(\alpha,\beta):=\lim{}_{i\in{\mathcal{I}}}\colim_{j\in{\mathcal{J}}}\Hom_{\CC}(\alpha(i),\beta(j)).
    \]
\end{definition}

{
\begin{definition}
    A $\Z$-graded $\mathbb{F}$-linear category $\CC_{dg}$ is a \emph{dg-category} if 
    its morphism spaces are differential graded $\Z$-modules, and
    morphism compositions 
    $\Hom_{\CC_{dg}}(X,Z)\otimes{\Hom}(Y,X)\rightarrow{\Hom(Y,Z)}$ 
    are differential graded $\Z$-module homomorphisms.
    That is, each morphism space decomposes as a direct sum of graded pieces 
    $\Hom(X,Y) = \bigoplus_{k \in \Z} \Hom^k(X,Y)$,
    where $\Hom^k(X,Y)$ is the space of homogeneous degree-$k$ morphisms from $X$ to $Y$.
    The morphism spaces $\Hom(X,Y)$ form cochain complexes $(\Hom(X,Y), d)$, where $d: \Hom^k(X,Y) \to \Hom^{k+1}(X,Y)$ satisfies $d^2 = 0$ 
    and
    morphism compositions are chain maps.
\end{definition}
}

For the category of chain complexes $\Kom(\CC)$ over 
{
a $\Z \oplus \Z$-graded $\mathbb{F}$-linear category
} $\CC$, the differentials of morphism spaces are defined as commutators with internal differentials,
i.e. for $f\in{\Hom^{k}_{\Kom(\CC)}(M^{\bullet},N^{\bullet})}$, the differential is $d(f)=d_{N}\circ f-(-1)^{k}f\circ d_{M}$.

\begin{definition}\label{def:twistedcomplex}
    A \emph{one-sided twisted complex} in the dg-category $\Kom(\CC)$ is a collection of chain complexes and chain maps
$\{B_{i},g_{i,j}:B_{i}\rightarrow{B_{j}}\}$  such that if $i\geq{j}$, then $g_{i,j}=0$, and the morphisms satisfy
    \begin{equation}
    \label{eq:twisted-condition}
    (-1)^{j}d(g_{i,j})+\sum_{k}g_{k,j}\circ{g_{i,k}}=0.
    \end{equation}
\end{definition}

Throughout, all twisted complexes will be one-sided, so we refer to them simply as twisted complexes. Let $\text{Tw}(\CC)$ denote the dg-category of twisted complexes over {$\CC$}.

\begin{definition}
    There is a functor $\Tot:\text{Tw}(\CC)\rightarrow{\Kom(\CC)}$ sending a twisted complex $B=\{\{B_{i}\},g_{i,j}:B_{i}\rightarrow{B_{j}}\}$ to its \emph{total complex}, denoted $\Tot(B)$, given by $\Tot(B):=
    \{\bigoplus_{i}B_{i}[i],d\}$. The brackets denote homological degree shifts and the differential $d$ is given by
    \[
    d:=\begin{bmatrix}
        d_{B_{0}} & 0 & 0 & \cdots\\
        g_{0,1} & -d_{B_{1}} & 0 & \cdots\\
        g_{0,2} & g_{1,2} & d_{B_{2}} & \cdots\\
        \vdots & \vdots & \vdots & \ddots
    \end{bmatrix}
    \]
\end{definition}

We use the same notation, $\Tot(A)$, to denote the \emph{total complex} of a double complex $A$.

The \emph{homotopy category} of chain complexes $K(\CC)$ is the category with the same objects as
{$\Kom(\CC)$}, but with morphisms taken up to chain homotopy. 
In fact, the morphisms are precisely given by $H^0(\Hom_{\Kom(\CC)}(X,Y))$.

Let $\mathcal{A}$ denote the following directed system $(A_{k},d_{k})$, where each $A_{k}$ is a chain complex with internal differential $d_{k}$:
\begin{equation}
\label{eq:directed-system-A}
    \mathcal{A} := A_0 \map{f_0} A_1 \map{f_1} A_2 \map{f_2} \cdots.
\end{equation}
Let $\mathcal{D}_{\mathcal{A}}$ denote the double complex in Figure \ref{fig:double-complex-B}. The differentials from the bottom row to the top row ($\idmap$ and $-f_k$ maps)  commute with the internal differentials $d_k$. The square brackets indicate homological shift, and also serve to differentiate the vertices of the diagram.

    \begin{figure}[t]
        \centering
        $\mathcal{D}_{\mathcal{A}} :=$ 
        \begin{tikzcd}
        A_0[0]
            & A_1[0] 
            & A_2[0] 
            & \text{\phantom{A4cell}}  \\
        A_0[-1] \arrow{u}{\idmap} \arrow{ur}{-f_{0}}
            & A_1[-1] \arrow{u}{\idmap} \arrow{ur}{-f_{1}}
            & A_2[-1] \arrow{u}{\idmap} \arrow{ur}{-f_2}
            & \\
        \end{tikzcd}
        $\cdots$
        \caption{The 2-term double complex associated to a directed system $\mathcal{A}$.}
        \label{fig:double-complex-B}
    \end{figure}

For such a double complex $\mathcal{D}_{\mathcal{A}}$, in the Ind-completion $\Ind(K(\CC))$ we have the \emph{totalization} of $\mathcal{D}_{\mathcal{A}}$, denoted $\Tot(\mathcal{D}_{\mathcal{A}})$, with signs chosen as in Figure \ref{fig:tot-B}. 

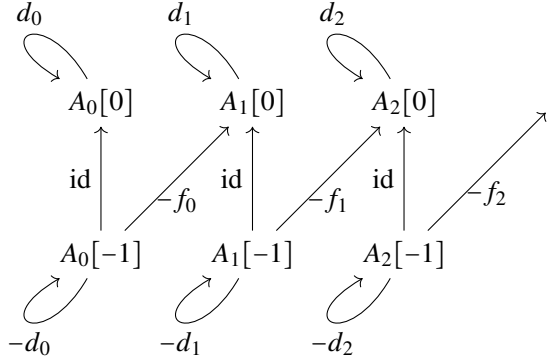
\begin{figure}[t]
    \centering
    \begin{tikzpicture}
        \def\s{2}; 
        \node (1N) at (1*\s,0*\s) {$A_0[0]$};
        \node (2N) at (2*\s,0*\s) {$A_1[0]$};
        \node (3N) at (3*\s,0*\s) {$A_2[0]$};
        \node (4N) at (4*\s,0*\s) {}; 
        \node (1S) at (1*\s,-1*\s) {$A_0[-1]$};
        \node (2S) at (2*\s,-1*\s) {$A_1[-1]$};
        \node (3S) at (3*\s,-1*\s) {$A_2[-1]$};
        \draw[->,out=120,in=150,looseness=8] (1N) to node[above]{$d_0$}  (1N);
        \draw[->,out=120,in=150,looseness=8] (2N) to node[above]{$d_1$}  (2N);
        \draw[->,out=120,in=150,looseness=8] (3N) to node[above]{$d_2$}  (3N);
        \draw[->,out=240,in=210,looseness=8] (1S) to node[below]{$- d_0$}  (1S);
        \draw[->,out=240,in=210,looseness=8] (2S) to node[below]{$- d_1$}  (2S);
        \draw[->,out=240,in=210,looseness=8] (3S) to node[below]{$- d_2$}  (3S);
        \draw[->] (1S) to node[left]{$\idmap$} (1N);
        \draw[->] (2S) to node[left]{$\idmap$} (2N);
        \draw[->] (3S) to node[left]{$\idmap$} (3N);
        \draw[->] (1S) to node[below]{$-f_0$} (2N);
        \draw[->] (2S) to node[below]{$-f_1$} (3N);
        \draw[->] (3S) to node[below]{$-f_2$} (4N);
    \end{tikzpicture}
    \caption{The total complex $\Tot(\mathcal{D}_{\mathcal{A}})$ of the double complex $\mathcal{D}_{\mathcal{A}}$. The components of the total differential $\partial^{\Tot}$ are depicted as the arrows.}
    \label{fig:tot-B}
\end{figure}

\begin{proposition}\cite[Proposition 2.28]{hog-youngsym}\label{prop:tot=twistedcomplex}
    Let $\mathcal{A}=(A_{0}\xrightarrow{f_{0}}{A_{1}}\xrightarrow{f_{1}}{A_{2}}\xrightarrow{f_{2}}{\cdots})$ be a directed system of chain complexes such that $f_{i+1}\circ{f_{i}}=0$ for all $i$. Assume also that each chain complex $A_{i}$ is homotopy equivalent to a corresponding chain complex $B_{i}$, then $\Tot(\mathcal{D}_\mathcal{A})$ is homotopy equivalent to the total complex of a twisted complex of the form
    \[
        \begin{tikzcd}
        B_{0} \arrow[r, "{g_{0,1}}"] \arrow[rr, "{g_{0,2}}", bend right=49] \arrow[rrr, "{g_{0,3}}", bend right=60] & B_{1} \arrow[r, "{g_{1,2}}"] \arrow[rr, "{g_{1,3}}", bend right=49] & B_{2} \arrow[r, "{g_{2,3}}"] & B_{3} \arrow[r] & \cdots
        \end{tikzcd},
    \]
where each $g_{i,j}$ is a map of homological degree $j-i-1$.
\end{proposition}

{
If the homotopy equivalences are given by the collection of maps $\phi_i: A_i \to B_i$, with homotopy inverses $\bar \phi_i$, then the maps $g_{i, i+1}$ may be chosen to be $\phi_{i+1}\circ f_i \circ \bar \phi_i$, so that the following diagram commutes (on the nose):
}
\begin{equation}
\label{eq:homotopy-commutes}
\begin{tikzcd}
    A_0 \arrow{r}{f_0}  \arrow{d}{\phi_0}
        & A_1 \arrow{r}{f_1} \arrow{d}{\phi_1}
        & A_2 \arrow{r}{f_2} \arrow{d}{\phi_2}
        & A_3 \arrow{r}{} \arrow{d}{\phi_3}
        & \cdots \\
    B_0 \arrow{r}{g_{0,1}} 
        & B_1 \arrow{r}{g_{1,2}} 
        & B_2 \arrow{r}{g_{2,3}} 
        & B_3 \arrow{r}{}
        & \cdots \\
\end{tikzcd}
\end{equation}
{
Then $g_{i+1, i+2} \circ g_{i, i+1} 
= (\phi_{i+2} \circ  f_{i+1} \circ \bar \phi_{i+1} ) \circ (\phi_{i+1} \circ f_i \circ \bar \phi_i)
\sim \phi_{i+2} \circ (f_{i+1} \circ f_i) \circ \bar \phi_i
\sim 0$, which is why we include the higher homotopies $g_{i,j}$ where $j-i > 1$ in the statement of Proposition \ref{prop:tot=twistedcomplex}.
}

\begin{definition}
\label{defn:colimit}
(Universal property for colimits)
A \emph{colimit} $C$ of the directed system \eqref{eq:directed-system-A} is an object in $\Ind(K(\CC))$ satisfying the following two properties:
\begin{enumerate}
    \item[(C-1)] 
    There are chain maps $(\phi_k)$ such that 
    \begin{center}
    \begin{tikzcd}
        A_k \arrow{rr}{f_k} \arrow[swap]{ddr}{\phi_k} 
            &
            & A_{k+1} \arrow{ddl}{\phi_{k+1}} \\
          & & \\
        & C & \\
    \end{tikzcd}
    \end{center}
    commutes up to homotopy for all $k \in \N$. That is, there are homotopies $(h_k:A_{k}\rightarrow{C})$ such that
    \begin{equation}
    \label{eq:colim-structure-maps}
        \phi_k - \phi_{k+1} \circ f_k = h_k \circ d_{A_k} + d_C \circ h_k.
    \end{equation}
    \item[(C-2)] 
    Let $C'$ be an object satisfying 
    (C-1), with structure maps $(\phi_k')$ and homotopies $(h_k')$. Then there exists a chain map $\xi$, unique up to homotopy, making the following diagram homotopy-commute for all $k \in \N$:
    \begin{center}
    \begin{tikzcd}
        A_k 
            \arrow{rr}{f_k} 
            \arrow[swap]{ddr}{\phi_k'}
            \arrow[swap,bend right]{ddddr}{\phi_k}
            &
            & A_{k+1} 
                \arrow{ddl}{\phi_{k+1}'} 
                \arrow[bend left]{ddddl}{\phi_{k+1}} \\
          & & \\
        & C' & \\
         & & \\
        & C \arrow[dotted]{uu}{\xi} & \\
    \end{tikzcd}
    \end{center}
\end{enumerate}
\end{definition}

By standard category theory arguments, the reader may check that if an object $C$ satisfies the conditions in Definition \ref{defn:colimit} exists, then it is unique up to homotopy equivalence.

\begin{definition} 
    Let $\mathcal{A}=(A_{0}\xrightarrow{f_{0}}{A_{1}}\xrightarrow{f_{1}}{A_{2}}\xrightarrow{f_{2}}{\cdots})$ be a directed system of chain complexes. Suppose that the homotopy category $K(\mathcal{C})$ contains infinite direct sums, then the \emph{homotopy colimit} of $\mathcal{A}$, denoted $\hocolim{\mathcal{A}}$, can be identified with the total complex of the $2$-term complex $\mathcal{D}_{\mathcal{A}}$ (see Figure \ref{fig:double-complex-B}).
\end{definition}

We now show $\hocolim(\mathcal{A})$ and $\colim(\mathcal{A})$ are equivalent in this context.

\begin{proposition}
\label{prop:totB-is-colim}
    We have that $\hocolim(\mathcal{A})=\Tot(\mathcal{D}_{\mathcal{A}})$ satisfies the conditions of Definition \ref{defn:colimit}. 
\end{proposition}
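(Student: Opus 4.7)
The plan is to recognize $\Tot(\mathcal{D}_{\mathcal{A}})$ as a mapping cone and work directly with the components of the total differential. Writing the top row as $B := \bigoplus_k A_k[0]$ and the bottom row as $B' := \bigoplus_k A_k[-1]$, the total differential decomposes as the internal (shifted) differentials on each row plus the ``horizontal'' map $g = \idmap - f : B' \to B$, where $f$ sends $A_k[-1]$ into $A_{k+1}[0]$ via $f_k$; thus $\Tot(\mathcal{D}_{\mathcal{A}}) \cong \cone(g)$.

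For (C-1), define $\phi_k : A_k \to \Tot(\mathcal{D}_{\mathcal{A}})$ as the inclusion into the summand $A_k[0]$, and $h_k : A_k \to \Tot(\mathcal{D}_{\mathcal{A}})$ as the inclusion into the summand $A_k[-1]$. Then $\phi_k$ is manifestly a chain map, since the internal differential on $A_k[0]$ is $d_k$. To verify \eqref{eq:colim-structure-maps}, apply $\partial^{\Tot}$ to $h_k(x)$ and read off the three components using Figure \ref{fig:tot-B}: the $-d_k$ contribution on $A_k[-1]$ cancels with $h_k(d_k x)$, and the $\idmap$ and $-f_k$ contributions combine to give exactly $\phi_k(x) - \phi_{k+1}(f_k(x))$.

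For the existence half of (C-2), given another candidate $(C', \phi_k', h_k')$, I would define $\xi : \Tot(\mathcal{D}_{\mathcal{A}}) \to C'$ piecewise by $\xi|_{A_k[0]} := \phi_k'$ and $\xi|_{A_k[-1]} := h_k'$. On $A_k[0]$ the chain-map condition reduces to $\phi_k'$ being a chain map, while on $A_k[-1]$ it becomes precisely the homotopy identity satisfied by $(\phi_k', h_k')$. The relation $\xi \circ \phi_k = \phi_k'$ then holds on the nose, so the universal diagram commutes strictly with this $\xi$.

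For uniqueness up to homotopy, suppose $\xi''$ also fits into the diagram, with witnessing homotopies $H_k : \phi_k' \simeq \xi'' \circ \phi_k$. Using the cone description, a chain map out of $\cone(g)$ is determined up to homotopy by a chain map on $B$ together with a null-homotopy of its pre-composition with $g$, modulo the obvious equivalence; one then stitches the $H_k$'s and the bottom-row discrepancy $\xi''|_{A_k[-1]} - h_k'$ into an explicit chain homotopy $\xi \simeq \xi''$. The main technical obstacle is this uniqueness step: the $H_k$'s must be packaged into a single map $\Tot(\mathcal{D}_{\mathcal{A}}) \to C'$ of homological degree $-1$ that is compatible with both summands of the cone and with the horizontal map $g$. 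The (C-1) verification and existence in (C-2) are pure bookkeeping with the components of $\partial^{\Tot}$ displayed in Figure \ref{fig:tot-B}.
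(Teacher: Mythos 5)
Your proposal is correct and takes essentially the same approach as the paper's: inclusions into the two rows of $\Tot(\mathcal{D}_{\mathcal{A}})$ serve as the structure maps $\phi_k$ and homotopies $h_k$ for (C-1), and for (C-2) the comparison map $\xi$ is assembled row-wise from the given data $(\phi'_k, h'_k)$, with $\xi \circ \phi_k = \phi'_k$ on the nose. The paper's version decorates $\phi_k^{\Tot}, h_k^{\Tot}, \overline{\phi}^B_k, \overline{h}^B_k$ with extra minus signs that cancel throughout (your plain inclusions work equally well), and neither treatment fully carries out the uniqueness half of (C-2) — though you are explicit about where that work would live, which is a point in your favor.
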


Note that this is true, as the indexing category is freely generated by the graph $(\bullet\rightarrow{\bullet}\rightarrow{\bullet\rightarrow{\cdots}})$, hence the homotopy colimit of $\mathcal{A}$ is the representing object of the homotopy-commutative version of the cocone of our directed system \cite[Section 10]{Shul-colimits}. However, we prove it now explicitly.

\begin{proof}
To check condition (C-1) in Definition \ref{defn:colimit}, let $\mathcal{A}=(A_{0}\xrightarrow{f_{0}}{A_{1}}\xrightarrow{f_{1}}{A_{2}}\xrightarrow{f_{2}}{\cdots})$ be a directed system in $\CC$, and let $\Tot{(\mathcal{D}_{\mathcal{A}})}$ denote the 2-term chain complex given in Figure \ref{fig:tot-B}. Note that we may define the following collection of maps $\phi^{\Tot}:=\{\phi_{k}^{\Tot}:A_{i}\rightarrow{\Tot(\mathcal{D}_{\mathcal{A}})\}}$ by:

\begin{center}
\begin{equation}
\label{eq:diagram1-prop2.12}
\begin{tikzcd}
\cdots \arrow[r]  & A_{k-1} \arrow[loop, "d_{A_{k-1}}", distance=2em, in=55, out=125] \arrow[r, "f_{k-1}"] \arrow[ddd, red, "\phi^{\Tot}_{k-1}"] & A_{k} \arrow[loop, "d_{A_{k}}", distance=2em, in=55, out=125] \arrow[r, "f_{k}"] \arrow[ddd, red, "\phi^{\Tot}_{k}"] & A_{k+1} \arrow[loop, "d_{A_{k+1}}", distance=2em, in=55, out=125] \arrow[r] \arrow[ddd, red, "\phi^{\Tot}_{k+1}"] & \cdots \\
  & & & & \\
  & & &  & \\
\cdots  & {A_{k-1}[0]} \arrow[loop, distance=2em, in=55, out=125]         & {A_{k}[0]} \arrow[loop, distance=2em, in=55, out=125]        & {A_{k+1}[0]} \arrow[loop, distance=2em, in=55, out=125]       & \cdots \\
\cdots \arrow[ru] & {A_{k-1}[-1]} \arrow[u, "\idmap"] \arrow[ru, "-f_{k-1}"] \arrow["-d_{A_{k-1}}"', loop, distance=2em, in=305, out=235]   & {A_{k}[-1]} \arrow[u, "\idmap"] \arrow[ru, "-f_{k}"] \arrow["-d_{A_{k}}"', loop, distance=2em, in=305, out=235] & {A_{k+1}[-1]} \arrow[u, "\idmap"] \arrow["-d_{A_{k+1}}"', loop, distance=2em, in=305, out=235] \arrow[ru]    & \cdots
\end{tikzcd}
\end{equation}
\end{center}

where $\phi^{\Tot}_{k}:=-\idmap_{A_{k}}$. We first verify that $\phi_{k}^{\Tot}$ is a chain map. Letting $d_{A_k}$ denote the internal differential of $A_{k}$, note that $\partial^{\Tot}\circ{\phi_{k}^{\Tot}}=d_{A_k}\circ{-\idmap_{A_{k}}}=-\idmap_{A_{k}}\circ{d_{A_{k}}}=\phi^{\Tot}_{k}\circ{d_{A_k}}$. We then verify that the $\phi_{k}^{\Tot}$ maps commute with $f_{k}$ maps up to homotopy; $\phi^{\Tot}_{k}\sim{\phi_{k+1}^{\Tot}\circ{f_{k}}}$. We define a new collection of maps $h^{\Tot}:=\{h_{k}^{\Tot}\}$, where $h_{k}^{\Tot}:=-\idmap_{A_{k}}$ from $A_{k}$ to the copy of $A_{k}$ in degree $-1$.

\begin{center}
\begin{equation}
\begin{tikzcd}
\cdots \arrow[r]  & A_{k-1} \arrow[r, "f_{k-1}"] \arrow[ddd, red, "\phi^{\Tot}_{k-1}" description] \arrow[dddd, blue, "h_{k-1}^{\Tot}" description, bend right=49] \arrow[loop, "d_{A_{k-1}}", distance=2em, in=55, out=125] & A_{k} \arrow[r, "f_{k}"] \arrow[ddd, red, "\phi^{\Tot}_{k}" description] \arrow[dddd, blue, "h_{k}^{\Tot}" description, bend right=49] \arrow[loop, "d_{A_{k}}", distance=2em, in=55, out=125] & A_{k+1} \arrow[r] \arrow[ddd, red, "\phi^{\Tot}_{k+1}" description] \arrow[dddd, blue, "h_{k+1}^{\Tot}" description, bend right=49] \arrow[loop, "d_{A_{k+1}}", distance=2em, in=55, out=125] & \cdots \\
 &  &   &  & \\
& & &  &  \\
\cdots  & {A_{k-1}[0]} \arrow[loop, distance=2em, in=55, out=125]       & {A_{k}[0]} \arrow[loop, distance=2em, in=55, out=125]  & {A_{k+1}[0]} \arrow[loop, distance=2em, in=55, out=125]  & \cdots \\ \cdots \arrow[ru] & {A_{k-1}[-1]} \arrow[u, "\idmap"] \arrow[ru, "-f_{k-1}"] \arrow["-d_{A_{k-1}}"', loop, distance=2em, in=305, out=235] & {A_{k}[-1]} \arrow[u, "\idmap"] \arrow[ru, "-f_{k}"] \arrow["-d_{A_{k}}"', loop, distance=2em, in=305, out=235]   & {A_{k+1}[-1]} \arrow[u, "\idmap"] \arrow[ru] \arrow["-d_{A_{k+1}}"', loop, distance=2em, in=305, out=235] & \cdots
\end{tikzcd}
\end{equation}
\end{center}

Note that $\phi^{\Tot}_{k}-\phi^{\Tot}_{k+1}\circ{f_{k}}=-\idmap_{A_{k}}+\idmap_{A_{k+1}}\circ{f_{k}}=-\idmap_{A_{k}}+f_{k}$. However, $\partial^{\Tot}\circ{h^{\Tot}_{k}}+h^{\Tot}_{k}\circ{d_{A_{k}}}=-(-d_{A_{k}}+\idmap_{A_{k}}-f_{k})-d_{A_{k}}=\phi_{k}^{\Tot}-\phi^{\Tot}_{k+1}\circ{f_{k}}$, thus, the diagram in Equation \eqref{eq:diagram1-prop2.12} commutes up-to-homotopy. Next, for \ref{defn:colimit} (C-2), let $B$ be an arbitrary chain complex in $\Kom$ with internal differential $d_{B}$, and suppose that we have structure chain maps $\phi^{B}:=\{\phi_{i}^{B}:A_{i}\rightarrow{B}\}$, and homotopies $h^{B}:=\{h_{i}^{B}\}$, such that $\phi_{k}^{B}-\phi_{k+1}^{B}\circ{f_{k}}=d_{B}\circ{h_{k}^{B}+h_{k}^{B}\circ{d_{A_k}}}$. Let $\overline{\phi}^{B}_{k}:=-\phi^{B}_{k}$ and $\overline{h}_{k}^{B}:=-h_{k}^{B}$. We may define a the chain map $\xi:\Tot(\mathcal{D}_{\mathcal{A}})\rightarrow{B}$ by assembling both collections $\{\overline{h}^{B}_{k}\}$ and $\{\overline{\phi}^{B}_{k}\}$:

\begin{center}
\begin{equation}
\begin{tikzcd}
\cdots \arrow[r]  & B \arrow[r, "id"] \arrow["d_{B}", loop, distance=2em, in=55, out=125]  & B \arrow[r, "id"] \arrow["d_{B}", loop, distance=2em, in=55, out=125] & B \arrow[r] \arrow["d_{B}", loop, distance=2em, in=55, out=125]        & \cdots \\
 & & & & \\
&  & &  &  \\
\cdots   & {A_{k-1}[0]} \arrow[loop, distance=2em, in=55, out=125] \arrow[uuu, purple, "\overline\phi^{B}_{k-1}" description] & {A_{k}[0]} \arrow[loop, distance=2em, in=55, out=125] \arrow[uuu, purple, "\overline\phi^{B}_{k}" description]  & {A_{k+1}[0]} \arrow[loop, distance=2em, in=55, out=125] \arrow[uuu, purple, "\overline\phi^{B}_{k+1}" description]  & \cdots \\
\cdots \arrow[ru] & {A_{k-1}[1]} \arrow[u, "\idmap"] \arrow[ru, "-f_{k-1}"] \arrow["-d_{A_{k-1}}"', loop, distance=2em, in=305, out=235] \arrow[uuuu, teal, "\overline h_{k-1}^{B}" description, bend left=49] & {A_{k}[1]} \arrow[u, "\idmap"] \arrow[ru, "-f_{k}"] \arrow["-d_{A_{k}}"', loop, distance=2em, in=305, out=235] \arrow[uuuu, teal, "\overline h_{k}^{B}" description, bend left=49] & {A_{k+1}[1]} \arrow[u, "\idmap"] \arrow[ru] \arrow["-d_{A_{k+1}}"', loop, distance=2em, in=305, out=235] \arrow[uuuu, teal, "\overline h_{k+1}^{B}" description, bend left=49] & \cdots
\end{tikzcd}
\end{equation}
\end{center}

Note first that $\xi$ is a chain map:
\begin{align*}
    d_{B}\circ{\xi}-\xi\circ{\partial^{\Tot}}&=d_{B}\circ{\overline{h}^{B}_{k}}-\xi\circ(-d_{A_k}+\idmap_{A_{k}}-f_{k})\\
    &=-d_{B}\circ{h^{B}_{k}}-(-\overline{h}_{k}^{B}\circ{d_{A_{k}}}+\overline{\phi}^{B}_{k}-\overline{\phi}^{B}_{k+1}\circ{f_{k}})\\
    &=-d_{B}\circ{h_{k}^{B}}+\overline{h}_{k}^{B}\circ{d_{A_{k}}}-\overline{\phi}_{k}^{B}+\overline{\phi}^{B}_{k+1}\circ{f_{k}}\\
    &=-(d_{B}\circ{h_{k}^{B}+h_{k}^{B}\circ{d_{A_{k}}})}-(-(d_{B}\circ{h_{k}^{B}}+h_{k}^{B}\circ{d_{A_{k}}}))\\
    &=0.
\end{align*}
Also, $\xi$ is clearly a chain homotopy equivalence map, as $\xi\circ{\phi_{k}^{\Tot}}=-\xi\circ{\idmap_{A_{k}}}=-\overline{\phi}_{k}^{B}=\phi_{k}^{B}$. Thus, $\Tot(\mathcal{D}_{\mathcal{A}})$ satisfies the universal property of the colimit of $\mathcal{A}$ up to homotopy, and Proposition \ref{prop:totB-is-colim} follows.
\end{proof}

The specific directed systems we study will satisfy the property that $f_{i+1}\circ{f_{i}}$ is zero. The corresponding homotopy colimits then admit useful properties, which we now discuss.

\begin{proposition}
\label{prop:contractiblehocolim}
Let $\mathcal{D}_{\mathcal{A}}$ be the double complex associated to a directed system $\mathcal{A}:=(A_{0}\xrightarrow{f_{0}}{A_{1}}\xrightarrow{f_{1}}{A_{2}}\xrightarrow{f_{2}}{\cdots})$ and let $\Tot(\mathcal{D}_{\mathcal{A}})$ denote the corresponding homotopy colimit (as in \eqref{fig:tot-B}). If $f_{i+1}\circ{f_{i}}\sim{0}$ for all $i\in{\Z_{\geq{0}}}$, then $\Tot(\mathcal{D}_{\mathcal{A}})$ is contractible.
\end{proposition}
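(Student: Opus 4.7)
The plan is to identify $\Tot(\mathcal{D}_{\mathcal{A}})$ with the mapping cone of an explicit endomorphism of $T:=\bigoplus_{k\geq 0}A_{k}$, and to exploit the null-homotopy hypothesis to exhibit a two-sided homotopy inverse for that endomorphism.

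First I would set up the identification. Define a chain map $F\colon T\to T$ of homological degree $0$ by $F|_{A_{k-1}}=f_{k-1}\colon A_{k-1}\to A_{k}\hookrightarrow T$ for $k\geq 1$, and $F|_{A_{0}}=0$. Reading off the differential in Figure \ref{fig:tot-B} component by component, in the splitting $\Tot(\mathcal{D}_{\mathcal{A}})^{n}\cong T^{n}\oplus T^{n+1}$ (the top row contributing $T^{n}$, the bottom row contributing $T^{n+1}$) the total differential sends $(a,b)$ to $\bigl(d_{T}a+(\idmap_{T}-F)(b),\ -d_{T}b\bigr)$. This is precisely the mapping cone differential, so $\Tot(\mathcal{D}_{\mathcal{A}})\cong \Cone(\idmap_{T}-F\colon T\to T)$.

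Next I would show $F^{2}\sim 0$. The hypothesis $f_{i+1}\circ f_{i}\sim 0$ provides for each $i$ a degree $-1$ map $h_{i}\colon A_{i}\to A_{i+2}$ with $f_{i+1}f_{i}=d_{A_{i+2}}h_{i}+h_{i}d_{A_{i}}$. Assembling these into a single degree $-1$ map $H_{F}\colon T\to T$ via $H_{F}|_{A_{k-2}}=h_{k-2}$, a direct component-wise computation gives $d_{T}H_{F}+H_{F}d_{T}=F^{2}$, so $F^{2}$ is null-homotopic.

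From there the argument is formal: the compositions $(\idmap_{T}-F)(\idmap_{T}+F)$ and $(\idmap_{T}+F)(\idmap_{T}-F)$ both equal $\idmap_{T}-F^{2}$, which is homotopic to $\idmap_{T}$ via $H_{F}$. Hence $\idmap_{T}-F$ is a two-sided homotopy equivalence with homotopy inverse $\idmap_{T}+F$, and the mapping cone of any homotopy equivalence is contractible. Therefore $\Tot(\mathcal{D}_{\mathcal{A}})\cong \Cone(\idmap_{T}-F)\simeq 0$. The main obstacle is purely bookkeeping: verifying the cone identification respects the sign and shift conventions of Figure \ref{fig:tot-B}, and checking that $F$ and $H_{F}$ are well-defined on the direct sum $T$ (which they are, as each summand maps into a single other summand, so finite support is preserved). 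Once these conventions are pinned down, the null-homotopy of $F^{2}$ does all the real work.
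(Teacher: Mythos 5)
Your proof is correct, and it is a genuinely different (though closely related) route from the one in the paper. Both arguments hinge on the same key observation: the shift operator $F$ built from the $f_i$'s squares to something null-homotopic. But the paper exploits this directly on the total complex: it defines $F$ as an endomorphism $F_{\Tot}\colon\Tot(\mathcal{D}_{\mathcal{A}})\to\Tot(\mathcal{D}_{\mathcal{A}})$ and shows $F_{\Tot}\sim\idmap_{\Tot}$ via the explicit degree $-1$ homotopy $A_i[0]\to A_i[-1]$, then concludes $\idmap_{\Tot}\sim F_{\Tot}\sim F_{\Tot}^2\sim 0$. You instead stay on $T=\bigoplus_k A_k$: you identify $\Tot(\mathcal{D}_{\mathcal{A}})\cong\Cone(\idmap_T-F_T)$, show $(\idmap_T\pm F_T)$ are two-sided homotopy inverses using $F_T^2\sim 0$, and then invoke the standard fact that the cone of a homotopy equivalence is contractible. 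Your identification of the total complex as a mapping cone (including the degree bookkeeping $\Tot^n\cong T^n\oplus T^{n+1}$) does match the paper's shift conventions from Figure \ref{fig:tot-B}. What your route buys is conceptual transparency: it makes visible that $\Tot(\mathcal{D}_{\mathcal{A}})$ always has the structure $\Cone(\idmap_T-F_T)$ (this is essentially the telescope construction), and it reduces the contractibility claim to the invertibility of $\idmap_T - F_T$, which is a clean algebraic statement. The paper's route is more self-contained in that it never names the cone and never appeals to the cone-of-an-equivalence lemma, and its intermediate Lemma (that $F_{\Tot}\sim\idmap_{\Tot}$) is reused implicitly in spirit elsewhere. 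Either write-up is fully rigorous once, as you acknowledge, the sign and shift conventions are pinned down.
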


\begin{proof}
The proof of this proposition follows directly from the following lemma.

\begin{lemma}
\label{lem:endoB}

    There exists an endomorphism $F:\mathcal{D}_{\mathcal{A}}\rightarrow{\mathcal{D}_{\mathcal{A}}}$, given by chain maps $F_{i}:A_{i}\rightarrow{A_{i+1}}$, where each $F_{i}$ is a copy of the chain map $f_{i}$. The endomorphism $F$ is denoted by the dotted arrows in the following figure.

    \begin{center}
        \begin{tikzcd}
        {A_{0}[0]} \arrow[r, dashed, magenta, "{F_{0}}"] & {A_{1}[0]} \arrow[r, dashed, magenta, "{F_{1}}"]  & {A_{2}[0]} \arrow[r, dashed, magenta]            & \cdots \\
        {A_{0}[-1]} \arrow[u, "\idmap"] \arrow[ru, "-f_{0}"] \arrow[r, dashed, magenta, "{F_{0}}"'] & {A_{1}[-1]} \arrow[u, "\idmap"] \arrow[ru, "-f_{1}"] \arrow[r, dashed, magenta, "{F_{1}}"'] & {A_{2}[-1]} \arrow[u, "\idmap"] \arrow[ru, "-f_{2}"] \arrow[r, dashed, magenta] & \cdots
        \end{tikzcd}
    \end{center}
    \vspace{2.5mm}
    The endomorphism $F:\mathcal{D}_{\mathcal{A}}\rightarrow{\mathcal{D}_{\mathcal{A}}}$ is homotopic to $\idmap_{\mathcal{D}_{\mathcal{A}}}$.
\end{lemma}

\begin{proof}
    The chain homotopy maps are given by identity maps $\idmap_{A_{i}}^{-1}:A_{i}[0]\rightarrow{A_{i}[-1]}$. Note that $[\partial^{\Tot},\idmap^{-1}_{A_{i}}]=\partial^{\Tot}\circ{\idmap_{A_{i}}^{-1}+\idmap_{A_{i}}^{-1}\circ{d_{A_{i}}}=(-d_{A_{i}}-f_{i}+\idmap_{A_{i}})+d_{A_{i}}}=-f_{i}+\idmap_{A_{i}}$; thus, the chain map $F$ is homotopic to $\idmap_{\mathcal{D}_{\mathcal{A}}}$.
\end{proof}

To complete the proof of Proposition \ref{prop:contractiblehocolim}, note that the condition $f_{i+1}\circ f_{i}\sim{0}$ implies that $F^{2}\sim{0}$. Thus, by Lemma \ref{lem:endoB}, we have that $\idmap_{\mathcal{D}_{\mathcal{A}}}\sim{F}\sim{F^{2}}\sim{0}$, proving the claim.
\end{proof}

Recall that homotopy equivalences of objects in a directed system can be extended to chain homotopy equivalences of homotopy colimits of said directed system.

\begin{lemma}\label{lem:forSwapsandSlides}
    Let $\mathcal{A}:=\{A_{i},f_{i}\}_{i\in{\mathbb{Z}_{\geq{0}}}}$ and $\mathcal{B} :=\{B_{i},g_{i}\}_{i\in{\mathbb{Z}}_{\geq{0}}}$ be directed systems in $\CC$.
    Suppose we have a collection of chain maps $\alpha_{i}:A_{i}\rightarrow{B_{i}}$ such that $\alpha_{i+1}\circ{f_{i}}\sim{g_{i}\circ{\alpha_{i}}}$, and let $C_{i}$ denote the cone $\Cone(A_{i}\xrightarrow{\alpha_{i}}B_{i})$.
 
    \begin{enumerate}
        \item[(a)]{There exists a chain map $\alpha:\hocolim(\mathcal{A})\rightarrow{\hocolim(\mathcal{B})}$ corresponding to the collection $\{\alpha_{i}\}$.}\\
        \item[(b)]{We have that $\cone{(\alpha)}=\hocolim(\cone{(A_{i}\xrightarrow{\alpha_{i}}B_{i}}))$}.\\
        \item[(c)] If $\alpha_{i}$ is a homotopy equivalence for all $i\in{\Z_{\geq{0}}}$, then $C_{i}\simeq{0}$ and $\hocolim{(C_{i})}\simeq{0}$ for all $i$, so $\cone{(\alpha)}\simeq{0}$.
    \end{enumerate}
\end{lemma}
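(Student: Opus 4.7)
The plan is to handle (a), (b), (c) sequentially, leveraging the explicit total complex description of $\hocolim$ from the preceding subsection.

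\emph{For (a).} I would first fix degree $(-1)$ homotopies $h_i \colon A_i \to B_{i+1}$ witnessing $\alpha_{i+1} \circ f_i - g_i \circ \alpha_i = d_{B_{i+1}} h_i + h_i \circ d_{A_i}$. Then I define $\alpha \colon \Tot(\mathcal{D}_{\mathcal{A}}) \to \Tot(\mathcal{D}_{\mathcal{B}})$ by placing $\alpha_i$ on each of the two summands $A_i[0]$ and $A_i[-1]$ and adding a single off-diagonal component $A_i[-1] \to B_{i+1}[0]$ given by a signed copy of $h_i$. Checking that $\alpha$ is a chain map splits componentwise against the differential in Figure~\ref{fig:tot-B}: strict commutation of each $\alpha_i$ with internal differentials and with the $\idmap$ vertical arrows handles the diagonal terms, and the defining equation of $h_i$ is precisely what is needed to cancel the failure of $\alpha_{i+1} \circ f_i$ and $g_i \circ \alpha_i$ to strictly agree.

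\emph{For (b).} I would show that $\Cone(\alpha)$ and $\hocolim(C_i)$ agree as chain complexes (not merely up to homotopy) under a canonical identification of summands. Both arise as total complexes of the same tricomplex --- one axis for the directed-system index $i$, one for the $[0]/[-1]$ hocolim shift, and one for the cone direction $\{A, B\}$ --- and totalization is symmetric in the order in which one collapses axes. Concretely, the directed system $\{C_i\}$ has structure maps $C_i \to C_{i+1}$ induced from $(f_i, g_i, h_i)$ viewed as a chain map of cones. Writing out both sides of the desired identification and matching differentials summand-by-summand gives the result, with the sign on $h_i$ fixed in (a) propagating into the cone differential in exactly the right way.

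\emph{For (c).} Each $\alpha_i$ being a homotopy equivalence forces $C_i \simeq 0$. Proposition~\ref{lem:tot=twistedcomplex} then lets me replace each $C_i$ in the directed system by the zero object, so $\hocolim(C_i)$ is homotopy equivalent to the total complex of a twisted complex built on zero objects, which is itself $0$. Combined with (b), this yields $\Cone(\alpha) \simeq 0$.

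\emph{Main obstacle.} The real content --- that $\hocolim$ preserves cones and hence termwise homotopy equivalences --- is standard homological algebra. The actual work lies in sign and shift bookkeeping: fixing the correct sign of $h_i$ in (a) and then carrying out the summand-by-summand identification in (b) so that the mapping cone differential on $\Cone(\alpha)$ and the hocolim differential on $\hocolim(C_i)$ agree on the nose. I expect these sign computations, rather than any conceptual issue, to be the bulk of the proof.
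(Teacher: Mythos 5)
Your proposal follows the same route as the paper: in (a) you place $\alpha_i$ on both the $A_i[0]$ and $A_i[-1]$ summands of $\Tot(\mathcal{D}_{\mathcal{A}})$ and insert $h_i$ as the single off-diagonal $A_i[-1]\to B_{i+1}[0]$ component; in (b) you identify $\Cone(\alpha)$ with $\hocolim(C_i)$ by packaging $(f_i,g_i,h_i)$ into structure maps $\Phi_i: C_i\to C_{i+1}$ and matching total complexes; and in (c) you conclude from termwise contractibility of $C_i$ that $\hocolim(C_i)\simeq 0$. This is exactly what the paper does, modulo your explicit appeal to Proposition~\ref{lem:tot=twistedcomplex} in step (c), which the paper leaves implicit.
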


\begin{proof}
\begin{enumerate}
    \item[(a)] The induced chain map $\alpha$ is given by each $\alpha_{i}:A_{i}[k]\rightarrow{B_{i}[k]}$ for all $k\in{\{0,1\}}$. The relevant homotopy maps are given by $h_{i}:A_{i}[0]\rightarrow{B_{i+1}[1]}$.\\
\item[(b)] Define a map $\Phi_{i}:C_{i}\rightarrow{C_{i+1}}$ by the following diagram:

\begin{center}
\begin{tikzcd}
A_{i} \arrow[rr, "\alpha_{i}"] \arrow[dd, "f_{i}"] \arrow[rrdd, "h_{i}"] &  & B_{i} \arrow[dd, "g_{i}"] \\
&  &  \\
A_{i+1} \arrow[rr, "\alpha_{i+1}"] &  & B_{i+1}                  
\end{tikzcd}
\end{center}
Then $\cone{(\alpha)}=\hocolim{(C_{0}\xrightarrow{\Phi_{1}}C_{1}\xrightarrow{{\Phi_{2}}}C_{2}\rightarrow{\cdots}})$.\\
\item[(c)] Suppose that each $\alpha_{i}$ is a homotopy equivalence. Then each cone $C_{i}$ is contractible, and therefore the homotopy colimit is contractible:
\[
    \hocolim{(C_{0}\xrightarrow{\Phi_{0}}C_{1}\xrightarrow{{\Phi_{1}}}C_{2}\rightarrow{\cdots}})\simeq{0}.
\] 
This implies $\cone(\alpha)\simeq{0}$ by part (b); therefore $\alpha$ is a homotopy equivalence.
\end{enumerate}
\end{proof}

Finally, we recall the following standard lemma from homological algebra.

\begin{lemma}\label{lem:Hcone}
Let $X,Y$ be complexes of vector spaces over $\BF$, and let $f:X \to Y$ be a chain map. Then
\[
    H^*\left ( \cone(f) \right ) \cong H^*\left ( 
        \cone \left (H^*(X) \map{f^*} H^*(Y) \right )
        \right ).
\]
\begin{proof}
    The short exact sequence of chain complexes 
    \[
        0 \to Y \map{\iota} \Cone(f) \map{\pi} X[1] \to 0
    \]
    induces a long exact sequence on homology
    \[
        \cdots \map{f^{*}} H^i(Y) \map{\iota^*} 
        H^i(\Cone(f)) \map{\pi^*}
        H^{i+1}(X) \map{f^*} \cdots, 
    \]
    i.e.\ there is an exact triangle 
    \[
        \begin{tikzcd}
         H^*(Y) \arrow{rr}{\iota^*} 
            & & H^*(\Cone(f)) \arrow{dl}{\pi^*} \\
        & H^*(X[1]) \arrow{ul}{f^*} &
        \end{tikzcd}
    \]
\end{proof}
\end{lemma}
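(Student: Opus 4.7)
The plan is to exploit the fact that $\BF$ is a field, which forces every chain complex of $\BF$-vector spaces to be formal (i.e.\ chain homotopy equivalent to its homology, viewed as a complex with zero differential). I would present this as two mutually supporting routes: one structural and one computational.

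The structural route is to pick explicit chain homotopy equivalences $\psi_X : X \to H^*(X)$ and $\psi_Y : Y \to H^*(Y)$, where the targets carry zero differential. Because cones are functorial up to chain homotopy, the composition $\psi_Y \circ f$ descends, up to homotopy, to a chain map $\tilde f : H^*(X) \to H^*(Y)$; and since both the source and target of $\tilde f$ have zero differentials, $\tilde f$ is forced to equal the map $f^*$ induced on homology. Applying the cone construction to the homotopy-commutative square $\psi_Y \circ f \simeq \tilde f \circ \psi_X$ then yields a chain homotopy equivalence $\cone(f) \simeq \cone(f^*)$, from which taking $H^*$ gives the claimed isomorphism.

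For readers who prefer an elementary calculation, the computational route starts from the exact triangle already written in the excerpt. Over a field, the long exact sequence splits into split short exact sequences
\[
0 \to \mathrm{coker}(f^{*,i}) \to H^i(\cone(f)) \to \ker(f^{*,i+1}) \to 0,
\]
yielding $H^i(\cone(f)) \cong \mathrm{coker}(f^{*,i}) \oplus \ker(f^{*,i+1})$. On the other hand, since $H^*(X)$ and $H^*(Y)$ carry zero differential, the cone differential on $\cone(f^*)$ sends $(y,x) \in H^i(Y) \oplus H^{i+1}(X)$ to $(f^*(x),0)$ up to sign, and a direct kernel-mod-image computation produces the same expression $\mathrm{coker}(f^{*,i}) \oplus \ker(f^{*,i+1})$. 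Matching the two presentations gives the desired isomorphism.

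The main obstacle is not conceptual; it lies in bookkeeping of signs and grading conventions when writing down the cone differential and verifying that the induced map on homology from the structural route is literally $f^*$ rather than some sign-twisted variant. Neither issue is deep, and either route can be executed in a few lines once the conventions are fixed.
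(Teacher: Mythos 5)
Your two routes are both correct, and it is worth separating them when comparing to the paper. The paper's (rather terse) proof is essentially your computational route: it writes the short exact sequence $0 \to Y \to \cone(f) \to X[1] \to 0$, passes to the long exact sequence, and stops at the exact triangle, leaving the reader to finish; your version supplies the missing steps, namely the splitting $H^i(\cone(f)) \cong \mathrm{coker}(f^{*,i}) \oplus \ker(f^{*,i+1})$ (which uses the field hypothesis) and the direct check that the zero-differential cone $\cone(f^*)$ has the same homology. Your structural route is a genuinely different argument: using formality of complexes of $\BF$-vector spaces ($X \simeq H^*(X)$ with zero differential) together with homotopy invariance of the cone, you obtain the stronger conclusion that $\cone(f) \simeq \cone(f^*)$ as complexes, not merely that their homologies agree, and you avoid the long exact sequence entirely. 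This is cleaner and arguably better suited to how the lemma is actually used later (through Corollary \ref{cor:Hcone}, where one wants to replace a whole double complex by its homology). The one step in the structural route that deserves a sentence rather than a wave of the hand is the assertion that a homotopy-commutative square with equivalences on the vertical arrows induces a homotopy equivalence on cones: this requires choosing a homotopy filling the square and building it into the map $\cone(f) \to \cone(\tilde f)$, and the resulting map is an equivalence by the five lemma in the homotopy category (or by a direct filtration argument). You flag this as bookkeeping, which is fair, but it is the one place where the argument has genuine content beyond formal category theory.
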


In Section \ref{sec:mainsec}, we use Lemma \ref{lem:Hcone} to compute homotopy colimits of complexes by passing to graded vector spaces:

\begin{corollary}\label{cor:Hcone}
Let $\CC = \{C_i, f_i\}_{i \in \Z_{\geq 0}}$ be a directed system of complexes $(C_i, d_i)$ of vector spaces over $\BF$. Then the associated homotopy colimit can be computed by first computing the homology of each $C_i$: 
\[
    \hocolim(\CC) 
    \quad \simeq  \quad 
    \begin{tikzcd}
        H^*(C_0)
            & H^*(C_1) 
            & H^*(C_2) 
            & \ \\
        H^*(C_0) \arrow{u}{\idmap} \arrow{ur}{-f_0^*}
            & H^*(C_1) \arrow{u}{\idmap} \arrow{ur}{-f_1^*}
            & H^*(C_2) \arrow{u}{\idmap} \arrow{ur}{-f_2^*} 
            &
    \end{tikzcd}
    \quad 
    \cdots
\]
\end{corollary}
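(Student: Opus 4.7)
The plan is to leverage the fact that $\BF$ is a field: each complex $C_i$ of $\BF$-vector spaces is chain homotopy equivalent to its cohomology $H^*(C_i)$ viewed as a complex with zero differential. Concretely, splitting the short exact sequences $0 \to B^i \to Z^i \to H^i \to 0$ and $0 \to Z^i \to C^i \to B^{i+1} \to 0$ exhibits $C_i$ as a direct sum of $H^*(C_i)$ and a contractible summand. Fix chain homotopy equivalences $\alpha_i \colon C_i \to H^*(C_i)$ together with quasi-inverses $\beta_i \colon H^*(C_i) \to C_i$, so that $\alpha_i \beta_i = \idmap$ and $\beta_i \alpha_i \sim \idmap_{C_i}$.

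Define the induced map $f_i^* := \alpha_{i+1} \circ f_i \circ \beta_i \colon H^*(C_i) \to H^*(C_{i+1})$; this agrees with the usual map on cohomology. The square-commutes-up-to-homotopy condition $\alpha_{i+1} \circ f_i \sim f_i^* \circ \alpha_i$ required by Lemma \ref{lem:forSwapsandSlides} then follows immediately from $\beta_i \alpha_i \sim \idmap_{C_i}$: we compute $f_i^* \circ \alpha_i = \alpha_{i+1} \circ f_i \circ \beta_i \circ \alpha_i \sim \alpha_{i+1} \circ f_i$, and the difference is an honest homotopy produced by precomposing $f_i$ with the homotopy witnessing $\beta_i \alpha_i \sim \idmap_{C_i}$.

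With this data in hand, apply Lemma \ref{lem:forSwapsandSlides} to the directed systems $\CC = \{C_i, f_i\}$ and $\mathcal{H} = \{H^*(C_i), f_i^*\}$ with connecting maps $\{\alpha_i\}$. Part (a) of the lemma produces an induced chain map $\alpha \colon \hocolim(\CC) \to \hocolim(\mathcal{H})$. Since each $\alpha_i$ is already a chain homotopy equivalence by construction, each cone $\cone(\alpha_i)$ is contractible, so part (c) gives $\cone(\alpha) \simeq \hocolim(\cone(\alpha_i)) \simeq 0$, and hence $\alpha$ is a chain homotopy equivalence. Unfolding the definition of the homotopy colimit of $\mathcal{H}$ as the total complex of its associated double complex $\mathcal{D}_{\mathcal{H}}$ yields precisely the diagram displayed in the statement.

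The only delicate point is the existence of the $\alpha_i$'s as genuine chain homotopy equivalences (not merely quasi-isomorphisms), and this is exactly where the field hypothesis on $\BF$ is used; once they are in place, no higher coherence data is needed because the indexing category is a $1$-dimensional directed diagram, so Lemma \ref{lem:forSwapsandSlides} is directly applicable. No spectral sequence or explicit sign tracking is required.
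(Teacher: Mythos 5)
Your proof is correct, and it takes a genuinely different route from the one the paper has in mind. The paper states the corollary immediately after Lemma \ref{lem:Hcone} with no separate proof; the implied argument is that $\hocolim(\CC)$ is itself a mapping cone $\cone\bigl(\bigoplus_i C_i \xrightarrow{\idmap - f} \bigoplus_i C_i\bigr)$, so Lemma \ref{lem:Hcone} (via the long exact sequence of the cone, plus the fact that homology commutes with direct sums) identifies $H^*(\hocolim(\CC))$ with the homology of the corresponding cone built from the $H^*(C_i)$, and over a field this isomorphism of homology upgrades automatically to a chain homotopy equivalence. Your argument instead front-loads the field hypothesis: you split each $C_i$ as $H^*(C_i)$ plus a contractible summand to get honest chain homotopy equivalences $\alpha_i$, check they intertwine $f_i$ and $f_i^*$ up to homotopy, and then feed everything into Lemma \ref{lem:forSwapsandSlides}(a)--(c), bypassing Lemma \ref{lem:Hcone} entirely. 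The paper's route is lighter on chain-level bookkeeping and works even if one only wants the statement at the level of homology; yours has the advantage of producing the equivalence directly at the chain level and of being self-contained given Lemma \ref{lem:forSwapsandSlides}, which the paper has already proved for exactly this sort of comparison of directed systems. One small imprecision: the homotopy witnessing $f_i^*\circ\alpha_i \sim \alpha_{i+1}\circ f_i$ is $\alpha_{i+1}\circ f_i\circ h_i$ where $h_i$ is the homotopy with $\beta_i\alpha_i - \idmap_{C_i} = d h_i + h_i d$, i.e.\ one must post-compose with $\alpha_{i+1}$ as well as pre-compose with $f_i$; your prose says only ``precomposing $f_i$ with the homotopy.'' This does not affect the validity of the argument.
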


\section{Khovanov-Rozansky homology, Bar-Natan categories, and categorified projectors}
\label{sec:BN-cats}

We assume the reader is already roughly familiar with Khovanov's categorification of the Jones polynomial;  for references, see \cite{Kh00,BN-Kh,BN-tangles}. Throughout, we follow the conventions used in \cite{MWW-lasagna, Hog-polyn-action, MN22}. In particular, the quantum degree of cobordisms is reversed from that of \cite{BN-tangles}.

\subsection{Conventions and notations for $\KhR_2$}

We follow the conventions of \cite{MWW-lasagna} and \cite{MN22}, which we briefly recall and collect here in this section. The details of the construction of $\KhR_2$ are left to Section \ref{sec:BN-category-definition}, where we more carefully review Bar-Natan's categories, with grading choices determined by the conventions in the present section.

Let $L$ be a framed, oriented link in $\R^3$. By an abuse of notation, we also let $L$ denote a fixed diagram for this link. 

The $\gl_2$ Khovanov-Rozansky homology (See \cite{KR-matrix-factorizations})
of $L$ over $\BF$, denoted $\KhR_2(L)$, is a bigraded vector space
\[
\KhR_{2}(L)=\bigoplus_{i,j \in{\mathbb{Z}}}\KhR_{2}^{i,j}(L)
\]
where $i$ and $j$ denote the \emph{homological grading} and internal \emph{quantum grading} respectively.

These homology groups are computed via an iterated mapping cone construction (or equivalently, tensor product of mapping cones) defined by the following two-term complexes associated to positive and negative crossings in the diagram for $L$, respectively. 
(For background on iterated mapping cones, see Section 4.1 of \cite{OS-dbc}, for instance.)

\begin{equation}
\label{eq:oriented-skein-relations}
    
    \begin{tikzpicture}[scale=.3, baseline=-.67ex]
         
    \end{tikzpicture}

    \ = \ h \inv q \ 
    
    \begin{tikzpicture}[scale=.3, baseline=-.67ex,rotate=-90]
        
    \end{tikzpicture}

    \to 
    
    \begin{tikzpicture}[scale=.3, baseline=-.67ex]
        
    \end{tikzpicture}

    \qquad
    \qquad

    \begin{tikzpicture}[scale=.3, baseline=-.67ex]
        
    \end{tikzpicture}

    \ = \ 
    
    \begin{tikzpicture}[scale=.3, baseline=-.67ex]
        
    \end{tikzpicture}

    \to \ h q\inv \ 
    
    \begin{tikzpicture}[scale=.3, baseline=-.67ex,rotate=-90]
        
    \end{tikzpicture}

\end{equation}
Here, multiplication by $h$ and $q$ respectively indicate the formal shift in homological and quantum gradings.
In various contexts in this article, it will also be necessary for us to use the bracket notation to indicate homological and quantum shift:
\[
    \BNbrack{L} [k] \{ \ell \} = h^k q^{\ell} \BNbrack{L}.
\]
Observe that we usually omit the Bar-Natan brackets $\BNbrack{\cdot}$ in figures; the reader may assume that all diagrams represent their algebraic counterpart in the appropriate Bar-Natan category.

The $\KhR_2$ theory is functorial for links in $S^{3}$ and link cobordisms in $S^{3}\times{[0,1]}$. Morrison-Walker-Wedrich (Theorem $3.3$ of \cite{MWW-lasagna}) showed that the \emph{sweep-around move} cobordism induces the identity map on $\KhR_2$. See \cite{CaprauFix, CMW09, Bla10, San21, Vog20, ETW, MR4598808}) for functoriality of Khovanov homology for links in $\R^3$. 

Let $\Sigma\subset{S^{3}\times{[0,1]}}$ be a properly embedded, framed, oriented surface intersecting the boundary $3$-spheres $S^{3}\times{\{0\}}$ and $S^{3}\times{\{1\}}$ in links $L_{0}$ and $L_{1}$ respectively. By the functoriality of $\KhR_{2}$, there is a well-defined induced homogeneous linear map
\[
\KhR_{2}(\Sigma):\KhR_{2}(L_{0})\rightarrow{\KhR_{2}(L_{1})}.
\]
of bidegree $(0,-\chi(\Sigma))$. (This is a special case of \eqref{eq:quantum-degree-of-cobordism}.)

\subsection{Khovanov homology conventions}

There are numerous conventions floating around in the literature. We collect them here for reference. 
Let $D$ denote a diagram for an oriented link $L$. 

\begin{itemize}

\item 
Khovanov's original homology theory $\Kh(D)$, defined in \cite{Kh00}, uses the oriented skein relations
\begin{equation*}
    
    \begin{tikzpicture}[scale=.3, baseline=-.67ex]
         
    \end{tikzpicture}
 \ = \ q 
    \begin{tikzpicture}[scale=.3, baseline=-.67ex]
        
    \end{tikzpicture}
 \to hq^2 
    \begin{tikzpicture}[scale=.3, baseline=-.67ex,rotate=-90]
        
    \end{tikzpicture}

    \qquad \qquad
    
    \begin{tikzpicture}[scale=.3, baseline=-.67ex]
        
    \end{tikzpicture}
 \ = \ h\inv q^{-2} 
    \begin{tikzpicture}[scale=.3, baseline=-.67ex,rotate=-90]
        
    \end{tikzpicture}
 \to q\inv 
    \begin{tikzpicture}[scale=.3, baseline=-.67ex]
        
    \end{tikzpicture}
.
\end{equation*}
Other reference works relevant to our discussion that use this convention include \cite{BN-Kh, BN-tangles, lee-endo, RasInv, GLW-schur-weyl, CoopKrush}. In particular, Bar-Natan introduces an unoriented bracket 
\begin{equation*}
    \BNbrack{
        
    \begin{tikzpicture}[scale=.3, baseline=-.67ex]
        
    \end{tikzpicture}

    }
    = 
    \begin{tikzpicture}[scale=.3, baseline=-.67ex]
        
    \end{tikzpicture}
 \to h q \ 
    \begin{tikzpicture}[scale=.3, baseline=-.67ex,rotate=-90]
        
    \end{tikzpicture}

\end{equation*}
and then adds a global shift:
\begin{equation*}
    \CKh(D) = \BNbrack{D}[-n_-]\{n_+ - 2n_-\}
\end{equation*} 
where $n_{\pm}$ is the number of positive/negative ($\pm$) crossings in $D$. 
Because of the early adoption of this convention by those interested in the relationships between Khovanov and Floer theories, this is still the convention for Khovanov homology appearing in the Floer literature. 
This is invariant under framing changes (i.e.\ Reidemeister I moves), as the global shift accounts for the writhe of the diagram.

\item On the other hand, the literature involving Khovanov's arc algebras and other tangle-related constructions (excluding \cite{BN-tangles}) usually uses the conventions from \cite{Kh-functor-valued, Kh-patterns}, where the quantum degree is reversed. We denote this ``new'' Khovanov convention by ${\newKh}$. Thus 
\begin{equation*}
    {\newKh(L)}^{i,j} 
    \cong 
    \Kh(L)^{i,-j}.
\end{equation*}
For reference, the oriented skein relations are below.
\begin{equation*}
    
    \begin{tikzpicture}[scale=.3, baseline=-.67ex]
         
    \end{tikzpicture}
 \ = \ 
    q\inv 
    \begin{tikzpicture}[scale=.3, baseline=-.67ex]
        
    \end{tikzpicture}
 \to h q^{-2} 
    \begin{tikzpicture}[scale=.3, baseline=-.67ex,rotate=-90]
        
    \end{tikzpicture}

    \qquad \qquad
    
    \begin{tikzpicture}[scale=.3, baseline=-.67ex]
        
    \end{tikzpicture}
 \ = \ 
    h\inv q^2 
    \begin{tikzpicture}[scale=.3, baseline=-.67ex,rotate=-90]
        
    \end{tikzpicture}
 
    \to q 
    \begin{tikzpicture}[scale=.3, baseline=-.67ex]
        
    \end{tikzpicture}
.
\end{equation*}
This is also insensitive to changes in framing induced by Reidemeister I moves.

\item Khovanov-Rozansky's \textit{unframed} link invariant, defined in \cite{KR-matrix-factorizations}, is denoted $\unframedKhR_2$ in the lasagna literature. This is related to the previous two constructions by $\unframedKhR_2(L) \cong \newKh(L^!)$ and $\unframedKhR_2^{i,-j}(L) \cong \Kh^{i,-j}(L^!)$, where $L^!$ denotes the mirror of a link $L$. The skein relations are 
\begin{equation*}
    
    \begin{tikzpicture}[scale=.3, baseline=-.67ex]
        
    \end{tikzpicture}
 \ = \ 
    q\inv 
    \begin{tikzpicture}[scale=.3, baseline=-.67ex]
        
    \end{tikzpicture}
 \to h q^{-2} 
    \begin{tikzpicture}[scale=.3, baseline=-.67ex,rotate=-90]
        
    \end{tikzpicture}

    \qquad \qquad
    
    \begin{tikzpicture}[scale=.3, baseline=-.67ex]
         
    \end{tikzpicture}
 \ = \ 
    h\inv q^2 
    \begin{tikzpicture}[scale=.3, baseline=-.67ex,rotate=-90]
        
    \end{tikzpicture}
 
    \to q 
    \begin{tikzpicture}[scale=.3, baseline=-.67ex]
        
    \end{tikzpicture}
.
\end{equation*}

\item Manolescu-Neithalath's cabled Khovanov homology uses a \textit{framed} version of Khovanov-Rozansky's invariant, and is denoted $\KhR_2$. The oriented skein relations are
\begin{equation*}
    
    \begin{tikzpicture}[scale=.3, baseline=-.67ex]
         
    \end{tikzpicture}
 \ = \ 
    h\inv q 
    \begin{tikzpicture}[scale=.3, baseline=-.67ex,rotate=-90]
        
    \end{tikzpicture}
 \to 
    \begin{tikzpicture}[scale=.3, baseline=-.67ex]
        
    \end{tikzpicture}

    \qquad \qquad
    
    \begin{tikzpicture}[scale=.3, baseline=-.67ex]
        
    \end{tikzpicture}
 \ = \ 
    
    \begin{tikzpicture}[scale=.3, baseline=-.67ex]
        
    \end{tikzpicture}
 \to hq\inv 
    \begin{tikzpicture}[scale=.3, baseline=-.67ex,rotate=-90]
        
    \end{tikzpicture}

\end{equation*}
Let $\overline D$ denote the mirror of the diagram $D$. Then 
\begin{equation*}
    \CKhR_2(D) \cong 
    \newCKh(D^!)\{-w(D)\} 
    = \newCKh(D^!)\{w(D^!)\}
\end{equation*}
where $w(\cdot)$ denotes the writhe of a diagram.
In some papers, such as \cite{Hog-polyn-action}, $\KhR_2$ is computed first using an unoriented skein relation
\begin{equation}
\label{eq:unoriented-skein-relation}
    \KhRbrack{
    \begin{tikzpicture}[scale=.3, baseline=-.67ex]
        
    \end{tikzpicture}
}
     \ = \ 
     h\inv q 
    \begin{tikzpicture}[scale=.3, baseline=-.67ex,rotate=-90]
        
    \end{tikzpicture}
 \to 
    \begin{tikzpicture}[scale=.3, baseline=-.67ex]
        
    \end{tikzpicture}
.
\end{equation}
Note that this agrees with the skein relation for $
    \begin{tikzpicture}[scale=.3, baseline=-.67ex]
         
    \end{tikzpicture}
$; in general, if $D$ contains $n_-$ negative crossings, we have
\[
    \CKhR_2(D) = \KhRbrack{D}[n_-]\{-n_-\}.
\]

\end{itemize}

\subsection{Conventions for Bar-Natan's cobordism categories}
\label{sec:BN-category-definition}

Here we recall some preliminary definitions about the cobordism categories associated to the categorification of the Temperley-Lieb algebra (\cite{BN-Kh}, \cite{BN-tangles}, and subsequent works) with the grading conventions used in the skein lasagna literature.

For $n \geq 0$, let $D^2_n$ denote the disk with a fixed set of $2n$ marked points $X_n \subset D^2_n$ on the boundary. A \emph{planar tangle} $T \subset D^2_n$ is a properly embedded $1$-manifold in $D^2_n$ with boundary $\partial T = X_n$. 

On the other hand, a \emph{tangle} in general may have crossings, and is to be regarded as properly embedded in $D^2_n \times (-\varepsilon, \varepsilon)$ with $X_n \subset \partial D^2_n \times \{0\}$. These will be represented using chain complexes built from the planar tangles above, which we discuss in the following sections. We use the same notation for the homological and quantum shift operators on tangles (e.g.\ $h^k q^\ell \BNbrack{T} = \BNbrack{T} [k]\{\ell\}$).

\begin{figure}[t]
    \begin{center}
        \includegraphics[width=.8\linewidth]{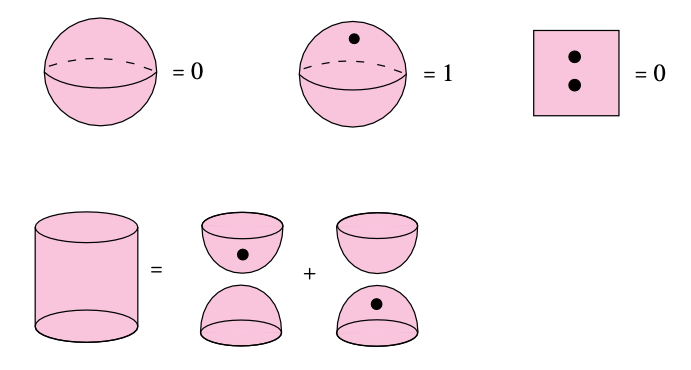}
    \end{center}
    \caption{Local relations in the Bar-Natan cobordism category. The bottom relation is called \emph{neck cutting}.}
    \label{fig:BNlocalrels}
\end{figure}

A \emph{(dotted) cobordism} $F:q^{i}T_{0}\rightarrow{q^{j}T_{1}}$ between (quantum-shifted) planar tangles $T_{0},T_{1}\subset{D_{n}^{2}}$ is a properly embedded surface $F \subset D^2_n \times [0,1]$ with boundary $\partial F = (T_0 \times \{0\}) \cup (T_1 \times \{1\}) \cup (X_n \times [0,1])$, possibly decorated with a finite number of dots. 

The \emph{quantum degree} of the cobordism $F$ is 
\begin{equation}
\label{eq:quantum-degree-of-cobordism}
    \deg_q(F) = n + j - i - \chi(F) + 2(\#\text{ of dots})
\end{equation}
where $\chi(F)$ is the Euler characteristic of the surface.

Furthermore, $\deg_q(F) = - \chi(F)$ for a closed surface $F$ without dots viewed as a cobordism from the unshifted $\emptyset$ to itself. The degree of a dot is $\deg_q(\bullet) = +2$. The category $\Cob_{n}$ is then defined as follows.

\begin{definition}
The objects $\Ob(\Cob_n)$ are formal shifts of planar tangles $T \subset D^2_n$. A morphism $f: q^i T_0 \to q^j T_1$ in $\Mor(\Cob_n)$ is a formal $\Z$-linear combination of dotted cobordisms, modulo isotopy rel boundary, movement of dots in the same connected component, and Bar-Natan's local relations, shown in Figure \ref{fig:BNlocalrels}. The morphisms of $\Cob_{n}$ are composed by vertical stacking. Occasionally, we require cobordisms categories of planar tangles with different numbers of specified endpoints. For planar tangles with $n$ bottom endpoints and $k$ top endpoints, the corresponding cobordism category is denoted $\Cob_{n,k}$.
\end{definition}

\begin{remark}\label{rmk:sqcupforcomplexes}
    Let $T_{1}$ and $T_{2}$ be tangle diagrams. We use the notation $\BNbrack{T_{1}}\sqcup{\BNbrack{T_{2}}}$ (resp. $\KhRbrack{T_{1}}\sqcup{\KhRbrack{T_{2}}}$) to denote the chain complex associated to the horizontal composition of tangles $T_{1}\sqcup{T_{2}}$.
\end{remark}


Let $\BNcat_n$ denote $\Mat(\Cob_n)$, and let $\Kom(\BNcat_n)$ denote the category of chain complexes over $\Mat(\Cob_n)$ where the morphisms are quantum degree $0$ chain maps, and where differentials have homological degree +1. 
Following \cite{Hog-polyn-action}, we generally drop the brackets, with the understanding that all instances of tangles should be interpreted as chain complexes in $\Kom(\TLcat_{n})$, defined below.

In the following sections we will often want to consider \emph{$(n,n)$ planar tangles}, or planar tangles in $D^2_n \cong [0,1] \times [0,1]$ where the boundary points $X_n$ are split into two sets, with $n$ each (equally spaced, say) along $\{0\} \times [0,1]$ and $\{1\} \times [0,1]$. In this case, we write $\TLcat_n$ in place of $\Mat(\Cob_n)$, and write $\Kom(\TLcat_n)$ for the category of chain complexes and degree-preserving chain maps. If we instead work with tangles with different numbers of top and bottom boundary points, we write $\TLcat_{n}^{k}$ instead.

Given two $(n,n)$ planar tangles $T, T'$, stacking $T'$ on top of $T$ gives a composition operation, forming the new planar tangle $T' \otimes T$. 
This composition induces a composition operation in $\TLcat_n$ and $\Kom(\TLcat_{n})$. 

An \emph{$(n,n)$ tangle}, which may contain crossings, is regarded as properly embedded in $D^2_n \times (-\varepsilon, \varepsilon) \cong [0,1]^2 \times (-\varepsilon, \varepsilon)$ with the marked points along  $\{0\} \times [0,1] \times \{0\}$ and $\{1\} \times [0,1] \times \{0\}$.

We now compile some of the techniques fundamental to the computation of Khovanov homology using Bar-Natan's local techniques. The \emph{delooping} operation, depicted in the following figure, describes an isomorphism in $\TLcat_{n}$ between an object with a closed loop and the same object with the closed loop removed. This operation is used to remove disjoint circles from diagrams.

\begin{center}
\includestandalone[height=1in]{tikz-imgs/BN-deloop-diamond}
\end{center}

This operation is used in conjunction with Gaussian elimination:

\begin{lemma}[\cite{FastKh}, Lemma 4.2] (Gaussian Elimination) 
    Let $C\in{\Kom(\mathcal{A})}$ be a chain complex over an additive category $\mathcal{A}$, and suppose that $C$ contains the subcomplex
    \[
    A\xrightarrow{\begin{bmatrix}
    0 \\
    f
    \end{bmatrix}} \begin{tabular}{c}
        $B$ \\
        $\oplus$\\
        $C$
    \end{tabular}\xrightarrow{\begin{bmatrix}
        \Phi & \alpha\\
        \beta & \gamma
    \end{bmatrix}}{\begin{tabular}{c}
        $D$ \\
        $\oplus$\\
        $E$
    \end{tabular}
    }\xrightarrow{\begin{bmatrix}
        0 & g
    \end{bmatrix}}{F}
    \]
    where $\Phi:B\rightarrow{D}$ is an isomorphism. Then complex $C$ is chain homotopy equivalent to the complex $C^{\prime}\in{\Kom(\mathcal{A})}$ with the above portion of the complex replaced by
    \[
    A\xrightarrow{f}{C}\xrightarrow{\gamma-\beta\Phi^{-1}\alpha}{E}\xrightarrow{g}{F}
    .
    \]
\end{lemma}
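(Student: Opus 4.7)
The plan is to reduce the lemma to a purely algebraic change of basis on the direct sums $B \oplus C$ and $D \oplus E$. Specifically, I would seek invertible endomorphisms that conjugate the middle differential into block-diagonal form with $\Phi$ as one block; once this is achieved, the acyclic subcomplex $B \xrightarrow{\Phi} D$ splits off as a contractible direct summand and what remains is exactly $C'$.

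First I would extract from $d^{2}=0$ the two identities $\alpha f = 0$ and $g\beta = 0$; the two other consequences $\gamma f = 0$ and $g\gamma = 0$ will not be needed. Then I would introduce the two block automorphisms
$$
P_{BC} = \begin{bmatrix} \idmap & \Phi^{-1}\alpha \\ 0 & \idmap \end{bmatrix}, \qquad P_{DE} = \begin{bmatrix} \idmap & 0 \\ -\beta\Phi^{-1} & \idmap \end{bmatrix},
$$
which are invertible endomorphisms of $B \oplus C$ and $D \oplus E$ in $\Mat(\mathcal{A})$ with the obvious sign-flipped inverses. A direct matrix computation yields
$$
P_{DE} \circ \begin{bmatrix} \Phi & \alpha \\ \beta & \gamma \end{bmatrix} \circ P_{BC}^{-1} = \begin{bmatrix} \Phi & 0 \\ 0 & \gamma - \beta\Phi^{-1}\alpha \end{bmatrix},
$$
and similarly $P_{BC} \circ \begin{bmatrix} 0 \\ f \end{bmatrix} = \begin{bmatrix} 0 \\ f \end{bmatrix}$ and $\begin{bmatrix} 0 & g \end{bmatrix} \circ P_{DE}^{-1} = \begin{bmatrix} 0 & g \end{bmatrix}$; these last two equalities are precisely where the identities $\alpha f = 0$ and $g\beta = 0$ are invoked.

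Once the middle differential is block-diagonal, the subcomplex splits in $\Kom(\mathcal{A})$ as a direct sum of $B \xrightarrow{\Phi} D$ (contractible because $\Phi$ is an isomorphism) and $A \xrightarrow{f} C \xrightarrow{\gamma - \beta\Phi^{-1}\alpha} E \xrightarrow{g} F$. Dropping the contractible summand produces $C'$; composing the change-of-basis automorphism with the inclusion and projection of the surviving summand gives an explicit chain homotopy equivalence $C \simeq C'$. The only real obstacle is bookkeeping---keeping straight the two sides of the conjugation, the placement of $\Phi^{-1}$, and the signs in the four matrix products---but there is no conceptual difficulty beyond carrying this out carefully.
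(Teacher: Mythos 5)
Your argument is correct, and it is in fact essentially the standard proof of this lemma. Note, however, that the paper does not supply its own proof — it cites \cite{FastKh}, Lemma 4.2 — so there is no internal proof to compare against; I can only assess the proposal on its own merits and against the cited source, which also proceeds by exhibiting the homotopy equivalence coming from this block triangular change of basis.

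The details all check out. From $d^2=0$ you correctly extract $\alpha f = 0$ and $g\beta = 0$, and you are right that these are exactly what is needed to show that conjugating by $P_{BC}$ on $B\oplus C$ and by $P_{DE}$ on $D\oplus E$ (and by the identity everywhere else) leaves the incoming and outgoing differentials in the stated form. The conjugated middle differential is indeed
\[
\begin{bmatrix} \Phi & 0 \\ 0 & \gamma - \beta\Phi^{-1}\alpha \end{bmatrix},
\]
so the complex splits as the direct sum of the contractible two-term complex $B\xrightarrow{\Phi} D$ (contracted by $\Phi^{-1}$) and the complex $C'$. One small remark for completeness: you write that $\gamma f = 0$ and $g\gamma = 0$ are not needed, and that is true for the change-of-basis computation itself; these identities are used implicitly, though, in that they guarantee $(\gamma-\beta\Phi^{-1}\alpha)f = 0$ and $g(\gamma - \beta\Phi^{-1}\alpha)=0$, so that $C'$ really is a complex. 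Of course, since $C'$ is by construction a direct summand of a complex isomorphic to $C$, its own $d^2=0$ is automatic, so nothing is lost; but it is worth being aware that those two conditions are not vacuous.
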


By delooping and the unoriented skein relation \eqref{eq:unoriented-skein-relation}, we have the following Reidemeister I chain homotopy equivalences {for $\KhR_2$}:
\[
\includestandalone{tikz-imgs/R1-positive} 
\qquad
\includestandalone{tikz-imgs/R1-negative}. 
\]

For the chain maps associated to the other Reidemeister moves, we use the conventions set in \cite[Section 3.3]{MWW-lasagna}.

\subsection{Categorified projectors and the resolution of identity braids}
\label{sec:projectors}

The categorified Jones-Wenzl projectors (see \cite{Roz-Pn, CoopKrush}) are very useful objects in $\Kom(\TLcat_{n})$. Letting $FT_{n}^{\otimes m}$ denote the $n$-strand braid with $m$ positive twists, the Jones-Wenzl projector is given by $P_{n}:=\colim_{m\rightarrow{\infty}}FT_{n}^{\otimes m}$ with degree 0 connecting maps. Throughout, we denote these projectors by boxes in tangle diagrams
\begin{center}
    \includestandalone[scale=0.8]{tikz-imgs/JWPk}.
\end{center}

The categorified Jones-Wenzl projectors enjoy the following properties.

\begin{lemma}
\label{lem: projector eats crossings} Let $P_{n}$ be a categorified Jones-Wenzl projector on $n$ strands.
    \begin{enumerate}
        \item The $n$-strand identity braid appears in $P_{n}$ only once in homological degree $0$.\\
        \item $P_{n}$ is homologically bounded above.\\
        \item $P_{n}$ kills turnbacks and is idempotent: $P_{n}\otimes{P_{n}} \simeq {P_{n}}$.\\
        \item $P_n$ eats crossings: for any Artin generator $\sigma_i$ in the braid group $B_n$, we have 
        \[
            P_n \otimes \sigma_i \simeq P_n \simeq P_n \otimes \sigma_i\inv.
        \]
        Note that there are no grading shifts with our conventions.
    \end{enumerate}
\begin{proof}
    For a proof of these statements, see \cite[Theorem 3.7]{Hog-polyn-action} and \cite{CoopKrush}.
\end{proof}
\end{lemma}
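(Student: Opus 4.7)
The plan is to establish the four items in sequence, using the colimit presentation $P_n = \colim_{m \to \infty} \FT_n^m$ with degree-preserving structure maps.

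Parts (1) and (2) should follow from a direct inspection of the cube of resolutions for $\FT_n^m$. The oriented skein relation \eqref{eq:oriented-skein-relations} places the braid-like resolution of each positive crossing in homological degree $0$ and the unbraid-like (turnback) resolution in degree $-1$ (with the $q$-shift). The all-braid-like corner of the cube reproduces the identity braid $\mathbf{1}_n$ and is the unique summand with total homological shift $0$; every other corner involves at least one turnback factor and therefore sits in strictly negative homological degrees. Because the structure maps in the colimit defining $P_n$ are degree-preserving and the $\mathbf{1}_n$ summand at degree $0$ is preserved stage to stage, this simultaneously yields (1) and the upper bound claimed in (2).

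The technical heart is the turnback-killing part of (3), namely $P_n \otimes e_i \simeq 0$ for each Temperley-Lieb generator $e_i$. The strategy is to show that tensoring $\FT_n^m$ with $e_i$ triggers many Reidemeister II simplifications followed by delooping and Gaussian eliminations (via Lemma 4.2 of \cite{FastKh}); after these reductions, the surviving chain groups live in homological degrees that are pushed to $-\infty$ as $m$ grows, and hence contribute nothing in the colimit. Idempotency $P_n \otimes P_n \simeq P_n$ then follows: in the cube of resolutions for each $\FT_n^m$, every summand other than $\mathbf{1}_n$ factors through some $e_i$, so tensoring on the left by $P_n$ contracts all such summands and leaves only the identity factor, recovering $P_n$.

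For (4), I would use the unoriented skein relation \eqref{eq:unoriented-skein-relation} to present each Artin generator as a two-term complex $\sigma_i \simeq \left(h^{-1}q\, e_i \to \mathbf{1}_n\right)$, with $\mathbf{1}_n$ in homological degree $0$. Tensoring on the left by $P_n$ and invoking the turnback-killing property contracts the $e_i$ summand, leaving $P_n \otimes \sigma_i \simeq P_n$ with no residual grading shift under the present conventions; the mirror skein relation handles $\sigma_i\inv$. The principal obstacle is the turnback-killing step in (3), since controlling the homological support of $\FT_n^m \otimes e_i$ uniformly in $m$ requires the detailed combinatorial bookkeeping carried out in \cite{Hog-polyn-action, CoopKrush}, upon which the rest of the argument leans.
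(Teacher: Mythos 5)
The paper does not actually give a proof: the lemma is discharged with a citation to \cite[Theorem 3.7]{Hog-polyn-action} and \cite{CoopKrush}. Your sketch is a faithful outline of the arguments in those references, and you correctly identify the load-bearing step — the turnback-killing property $P_n \otimes e_i \simeq 0$ — as the place where the homological bookkeeping from those sources is genuinely needed. In that sense your proposal and the paper rely on exactly the same external input, with yours supplying a useful gloss on what that input is doing.

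A few points worth tightening if you wanted to turn the sketch into a self-contained argument. In parts (1)--(2), observing that the all-braid-like corner of the cube is the unique resolution of $\FT_n^m$ equal to $\one_n$ and sits at homological degree $0$ shows $\one_n$ appears once \emph{before} simplification; you still need to argue it survives the Gaussian eliminations and the colimit, which requires the degree-$0$ endomorphism space of $P_n$ to be one-dimensional (this is \cite[Corollary 3.35]{Hog-polyn-action}, which the paper invokes elsewhere). In part (3), ``every summand other than $\one_n$ factors through some $e_i$'' should be phrased as: every resolution of $\FT_n^m$ other than the all-braid-like one has through-degree $<n$ and hence factors through a Temperley--Lieb generator applied directly on the $P_n$ side, so $P_n$ annihilates it; then $P_n \otimes \FT_n^m$ collapses as an iterated mapping cone to $P_n \otimes \one_n = P_n$, and idempotency follows by passing to the colimit provided the induced structure maps are homotopic to identities (again degree-$0$ endomorphism uniqueness). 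Part (4) is correct as stated: the unoriented bracket puts $\one_n$ at degree $(0,0)$ for $\sigma_i$, and the oriented skein relation for $\sigma_i^{-1}$ likewise has $\one_n$ at $(0,0)$, so no residual shift appears after contracting the $e_i$ term.
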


Along with categorified Jones-Wenzl projectors $P_{n}$, we will employ a related family of chain complexes in $\Kom(\TLcat_{n})$ called \emph{higher order projectors}. 

\begin{definition}
    Let $D$ be a Temperley-Lieb diagram in $\TLcat_{n}$. The \emph{through-degree} of $T$, denoted $\tau(T)$, is the minimal integer $k$ such that $D$ factors into a vertical stacking $D_{1}\otimes{D_{2}}$, where $D_{1}\in{\TLcat_{k}^{n}}$ and $D_{2}\in{\TLcat_{n}^{k}}$.
\end{definition}

\begin{definition}
    Let $\tau$ denote the \emph{through-degree} of a chain complex of Temperley-Lieb diagrams in $\Kom(\TLcat_{n})$, meaning, for $A\in{\Kom(\TLcat_{n})}$, define $\tau(A)$ to be 
        $\max \{\tau(D) \ | \ D \text{ a Temperley-Lieb diagram appearing in }  A\}$.
\end{definition}

\begin{definition}\cite[Definition 8.4]{Cooper-Hog-family}
    The \emph{$k$th higher order projector} is a chain complex $P_{n,k}\in{\Kom(\TLcat_{n})}$ uniquely defined by the following properties
    \begin{enumerate}
        \item $\tau(P_{n,k})=k$.\\
        \item For each $l\in{\mathbb{Z}_{+}}$, and $a\in{\Cob_{n,l}}$, if $\tau(a)<k$, then $a\otimes{P_{n,k}}\simeq{0}$. ($P_{n,k}$ kills complexes with sufficiently low through-degree.)\\
        \item There exists $C\in{\Kom(\TLcat_{n})}$ with $\tau(C)<k$, and a twisted complex 
        \[
            D=\one_{n}\rightarrow{C}\rightarrow{hP_{n,k}}
        \]
        such that $a\otimes{D}\simeq{D\otimes{\overline{a}}}\simeq{0}$ for all $a\in{\Cob_{n,m}}$ such that $\tau(a)\leq{k}$.
    \end{enumerate}
    We call the higher order projector $P_{n,0}$ of through-degree $0$ the \emph{Rozansky projector} on $n$ strands in $\Kom(\TLcat_{n})$ (see \cite{ROZ-Cat,WillisS1xS2}).
\end{definition}

Note that the higher order projector $P_{n,k}$ factors through $P_{k}$. More precisely, restating Observation $8.8$ of Cooper-Hogancamp in \cite{Cooper-Hog-family}, given $A$, $B$ in $\Kom(\TLcat_{n})$ such that $\tau(A)\geq{k}$ and $\tau(B)\geq{k}$, there is an isomorphism $P_{n,k}\cong{A\otimes{P_{k}}\otimes{B}}$. In the decategorified setting, for some idempotents $p_{\epsilon}$ in the Temperley-Lieb algebra $TL_{n}$, there is a decomposition of the identity:
\begin{equation}\label{eq:decatresofid}
1=\sum p_{\epsilon}.
\end{equation}
Roughly, the categorification of \eqref{eq:decatresofid} may be realized as the following chain homotopy equivalence.

\begin{equation}\label{eq:resofidentity}
    \one_{n}\simeq{\left(P^{\vee}_{n,n(\text{mod 2})}\rightarrow{P^{\vee}_{n,n(\text{mod 2})+2}\rightarrow{\cdots}}\rightarrow{P^{\vee}_{n,n-2}}\rightarrow{P^{\vee}_{n}}\right)}
\end{equation}

Where the right-hand side has higher differentials $P^{\vee}_{n,i}\rightarrow{P^{\vee}_{n,j}}$ $(j>i)$. This homotopy equivalence, referred to as the \emph{resolution of the identity} \cite[Section 7, Observation 8.9]{Cooper-Hog-family}, will prove to be an instrumental tool in the arguments to follow.

\section{Cabled Khovanov homology and skein lasagna modules}\label{sec:lasagnabackground}

\subsection{Skein lasagna modules}\label{subsec:skeinlasagnamodules}

Morrison-Walker-Wedrich \cite{MWW-lasagna} define an invariant $\skein^{2}_{0}$ of a pair $(W,L\subset{\partial{W}})$, where $W$ is an oriented $4$-manifold, and $L$ a link in its boundary. For a null-homologous boundary link $L$ ($[L]=0\in{H_{1}(W;\mathbb{Z})}$), the invariant $\skein^{2}_{0}$ is a triply-graded module, with trigrading $(\alpha,i,j)$ in $H_{2}^{L}(W)\times{\Z}\times{\Z}$. The $H^{L}_{2}(W)$ term is the $H_{2}(W)$-torsor, defined as $\partial^{-1}([L])\subset{H_{2}(W;L)}$, where $\partial$ is the boundary map in the long exact sequence of the pair $(W,L)$. Note that the homological level may be taken to be a (non-canonical) element of $H_{2}(W;\Z)$. The gradings $i$ and $j$ are the homological and quantum gradings from $\KhR_{2}$ respectively, and the grading in $H^{L}_{2}(W)$ is referred to as the \emph{homological level} of $\skein_{0}^{2}$. The modules $\skein^{2}_{0}$ are generated by \emph{lasagna fillings}, which are defined as follows in Figure \ref{fig:lazfilling}.

\begin{figure}[t]
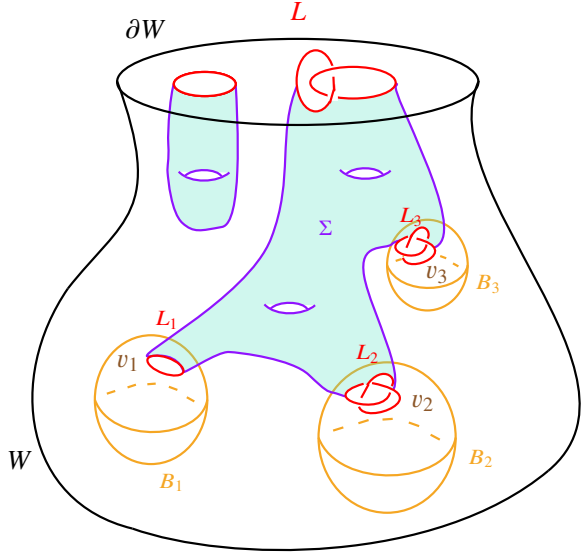

    \begin{center}
        \includestandalone[scale=1.1]{tikz-imgs/lasagnafilling-1}
    \end{center}
     \caption{A lasagna filling $\mathcal{F}$ of the pair $(W,L)$.}
     \label{fig:lazfilling}
\end{figure}

\begin{definition}
    A \emph{lasagna filling} of $(W,L\subset{\partial{W}})$ is an object consisting of the following data: $\mathcal{F}:=(\Sigma,\{(B_{i}, L_{i}, v_{i})\})$, where:
    \begin{itemize}
        \item{A finite set of input balls ("\textit{meatballs}") $\{B_{i}\}$, disjointly embedded in $W$, with a link $L_{i}\subset{\partial{B_{i}}}$, and a homogeneous label $v_{i}\in{\KhR_{2}(L_{i})}$}.\\
        \item{A framed oriented surface $\Sigma$ properly embedded in $W\setminus{\bigcup_{i}B_{i}}$ such that $\Sigma\cap{B_{i}}=L_{i}$, and $\Sigma\cap{\partial{W}}=L$.}
    \end{itemize}
\end{definition}

There is a well-defined bidegree for fillings $\mathcal{F}$ of a pair $(W,L)$:

\begin{definition}
    The \emph{bidegree of a lasagna filling} $\mathcal{F}$ is given by:
    \[
    \mathrm{deg}(\mathcal{F}):=\sum_{i}\mathrm{deg}(v_{i})+(0,-\chi(\Sigma))
    \]
\end{definition}

Furthermore, when $W=B^{4}$, we define $\KhR_{2}(\mathcal{F}):=\KhR_{2}(\Sigma)(\otimes_{i}x_{i})\in{\KhR_{2}(\partial{W};L)}$, where $\KhR_{2}(\Sigma)$ is the morphism induced by $\Sigma$ of $\mathcal{F}$.

\begin{definition}\label{def:skeinlasagnamodule}
    \normalfont
    For a $4$-manifold $W$ and a link $L\subset{\partial{W}}$, the \emph{skein lasagna module} of $(W;L)$ is the bigraded abelian group:
    \[
    \skein^{2}_{0}(W;L):=\mathbb{F}\{{\mathcal{F}\text{ of }(W,L)} \}/\sim
    \]
\end{definition}

The relation is defined as the transitive and linear closure of the following relations.

\begin{itemize}
    \item Linear combinations of lasagna fillings are multilinear in the $\KhR_{N}$ labels $\{v_{i}\}$.
    \item Two lasagna fillings $\mathcal{F}_{1}$ and $\mathcal{F}_{2}$ are equivalent if $\mathcal{F}_{1}$ has an input ball $B_{i}$ with boundary link $L_{i}$ labelled $v_{i}$, and the filling $\mathcal{F}_{2}$ is obtain from $\mathcal{F}_{1}$ by inserting a lasagna filling $\mathcal{F}_{3}$ of $(B_{i},L_{i})$ into $B_{i}$ such that $v_{i}=\KhR_{2}(\mathcal{F}_{3})$, possibly followed by an isotopy rel boundary (see Figure \ref{fig:lasagnarelation}).
\end{itemize}

\begin{figure}[t]
    \centering
    \includegraphics[width=\linewidth]{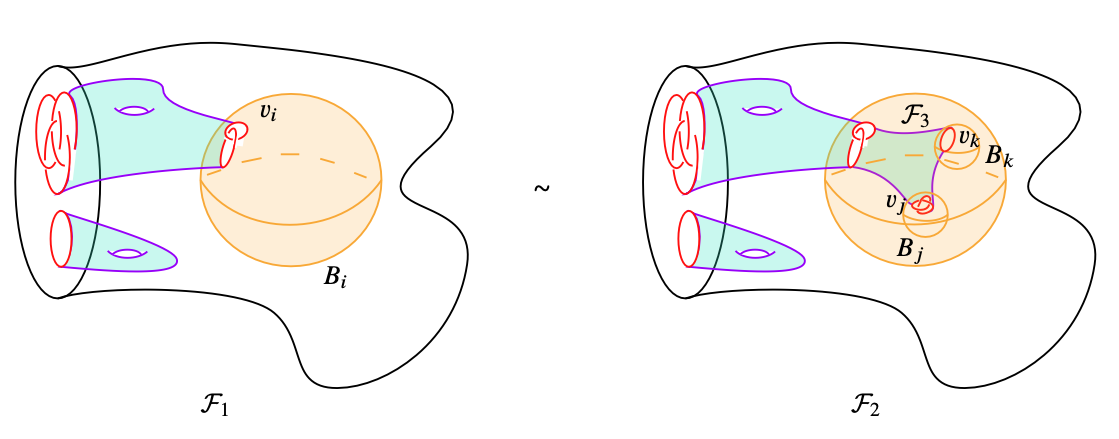}
    \caption{The second relation of Definition \ref{def:skeinlasagnamodule} on lasagna fillings.}
    \label{fig:lasagnarelation}
\end{figure}


Furthermore, by \cite{MN22} Proposition 1.6, the skein lasagna module's isomorphism class remains unchanged after the removal of a $4$-ball. In particular, if $W$ is a closed, smooth $4$-manifold, there is an isomorphism $\skein_{0}^{2}(W;\emptyset)\cong{\skein_{0}^{2}(W\setminus{B^{4}};\emptyset)}$.

\subsection{Cabling and Khovanov homology}\label{subsec:cabledkh}
We now recall Manolescu-Neithalath's \emph{$2$-handlebody formula} \cite[Theorem 1.1]{MN22}.
Let $L\subset{S^{3}}$ be a framed oriented link with components $L_{1},L_{2},...,L_{k}$, and let $r^{-}$, $r^{+}\in{\Z_{\geq{0}}^{k}}$.

\begin{definition}
    The \emph{$(r^{-},r^{+})$-cable of $L$}, denoted $L(r^{-},r^{+})$, is the framed oriented link consisting of $r^{-}_{i}$ many negatively oriented parallel strands and $r^{+}_{i}$ many positively oriented parallel strands for the component $L_{i}$. The notion of parallel is given by pushing off along the framing of each $L_{i}$, and `positively' (resp. negatively) oriented means the orientation of the parallel strand agrees (resp. disagrees) with the orientation of the component $L_{i}$.
\end{definition}

\begin{figure}[t]
    \begin{center}
        \includegraphics[width=1.5in]{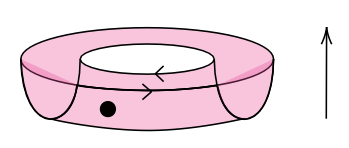}
    \end{center}
    \caption{The dotted annulus cobordism, denoted by $\robacupdot$ throughout.}
    \label{fig:dottedcakepan}
\end{figure}

Let $\acupdot$ denote the dotted annulus cobordism from the empty link to two oppositely oriented parallel strands in Figure \ref{fig:dottedcakepan}. Let $e_{i}$ denote the $i$th unit vector. Note that $\acupdot$ is a cobordism between cables of $L$;
\begin{equation}\label{eq:dottedcup}
    \acupdot:L(r^{-},r^{+})\rightarrow L(r^{-}+e_{i},r^{+}+e_{i}).
\end{equation}

\begin{definition}\label{def:cabledkh}
Let $\mathfrak{S}_{n}$ denote the symmetric group on $n$ elements and, for $\alpha\in{\mathbb{Z}^{n}}$, let $\alpha^{+}$ (resp. $\alpha^{-}$) denote the tuple $(\alpha_{1}^{+},...,\alpha_{n}^{+})$ where $\alpha_{i}^{+}:=\max{\{0,\alpha_{i}\}}$ (resp. $\alpha_{i}^{-}:=\min{\{0,\alpha_{i}\}}$).
The \emph{cabled Khovanov homology} over $\mathbb{F}$ of a framed oriented link $L$ with $n$ components at homological level $\alpha\in{\mathbb{Z}^{n}}$ is defined as

\[
\underline{\KhR}_{2,\alpha}(L)=\left(\bigoplus_{r\in{\mathbb{Z}_{>0}^{n}}}\KhR_{2}(L(r-\alpha^{-},r+\alpha^{+}))\{-2|r|-|\alpha|\}\right)/\sim
\]
where $\sim$ is the transitive and linear closure of the following identifications:
\[
\beta(b)(v)\sim{v}, \qquad \KhR_{2}(\acupdot)(v)\sim{v}
\]
for all $b\in{\mathfrak{S}_{2r_{i}+|\alpha_{i}|}}$ and for all $v\in{\KhR_{2}(L(r-\alpha^{-},r+\alpha^{+}))}$ where
\begin{enumerate}
    \item \label{item:braidaction} For $b$ an element of the braid group $B_{r_{i}-\alpha_{i}^{-}+r_{i}+\alpha_{i}^{+}}$, $\beta_{i}(b)$ is the automorphism induced on $\KhR_{2}(L_{i}(r_{i}-\alpha_{i}^{-},r_{i}+\alpha_{i}^{+}))$ by the braid group action interchanging parallel strands. By \cite{GLW-schur-weyl}, this braid group action on cables factors through the symmetric group.
    \item $\KhR_{2}(\acupdot)$ denotes the morphism induced by the dotted annulus cobordism $\acupdot$ (see Figure \ref{fig:dottedcakepan}).
\end{enumerate}

\end{definition}

Note that the undotted annulus relation is omitted from our definition as in \cite[Proposition 3.8]{MN22}. We present an equivalent definition of $\underline{\KhR}_{2}$ tailored to $4$-manifold and boundary link pairs $(W,L)$, which we will use in this article. Let $\symmetrized{\KhR_{2}(L)}$ denote the vector space $\KhR_{2}(L)$ symmetrized with respect to the braid group action in part (1) of Definition \ref{def:cabledkh}.
If $f$ is a linear map between vector spaces, let $\symmetrized{f}$ denote the induced map on symmetrized vector spaces; see Section \ref{subsec:kirbybeltconstruct} for more details.

\begin{definition}\label{def:CDS}
Let $W$ be a $4$-manifold with a $0$-handle, $k$ many $2$-handles, and possibly a $4$-handle, with $2$-handles attached along a framed oriented link $L=L_{1}\cup{L_{2}}\cup{...}\cup{L_{k}}$. Let $(I,\leq)$ be the directed set $\mathbb{Z}_{\geq{0}}^{k}$ with the poset relation induced by the total ordering $\leq$ on $\mathbb{Z}$; this forms a poset category. The \emph{cabling directed system for $W$ at homological level $\alpha$}, denoted $\mathcal{B}^{\alpha}(W;\emptyset)$, is
{a functor from  $\Z^k_{\geq 0}$ to $\ggVect_\F$ where}

\begin{itemize}
    \item for $a \in I$, $\mathcal{B}^{\alpha}(W;\emptyset)(a):=\symmetrized{\KhR_{2}(L_{a})}$, where $L_{a}:=L(a-\alpha^{-},a+\alpha^{+})$ and 
    \item the arrow from $\mathcal{B}^{\alpha}(W;\emptyset)(a)$ to $\mathcal{B}^{\alpha}(W;\emptyset)(a+e_{i})$ is $\symmetrized{\acupdot}$ at the corresponding $2$-handle attachment site $L_{i}\subset{L}$ as described above.
\end{itemize}

\end{definition}

The \emph{cabled Khovanov homology of $L$} at homological level $\alpha$ may then be defined as the colimit of the cabling system $\mathcal{B}^{\alpha}(W;\emptyset)$. The above construction for the framed oriented link in a $2$-handlebody Kirby diagram is isomorphic to the skein lasagna module of the pair $(W,\emptyset)$, where $W$ is the manifold described by said Kirby diagram. In particular, we have the following $2$-handle formula:

\begin{theorem}[\cite{MN22}, Theorem 1.1]
    Let $W$ be the $4$-manifold obtained by attaching $2$-handles to $B^{4}$ along an oriented, framed $n$-component link $L$. For each $\alpha\in{H_{2}(W;\Z)\cong{\mathbb{Z}^{n}}}$, there is an isomorphism
    \begin{equation}
        \Phi:\colim_{\ggVect}(\mathcal{B}^{\alpha}(W;\emptyset))\overset{\cong}{\longrightarrow}{\skein_{0}^{2}(W;\emptyset,\alpha)}.
    \end{equation}
\end{theorem}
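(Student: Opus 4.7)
The plan is to construct $\Phi$ explicitly on representatives and then establish a bijection by putting lasagna fillings into a canonical form dictated by the handle decomposition of $W$. Given $v \in \symmetrized{\KhR_2(L_a)}$ at index $a \in \mathbb{Z}_{\geq 0}^n$, I would define $\Phi(v)$ as follows: place a single input ball $B$ in the interior of the $0$-handle $B^4 \subset W$, identify its boundary $S^3$ with a $3$-sphere containing the cable diagram $L_a = L(a - \alpha^-, a + \alpha^+)$, and label it by any lift of $v$ in $\KhR_2(L_a)$. For each component $L_i$, extend the $2a_i + |\alpha_i|$ cable strands out of $\partial B^4$ and through the $i$-th attached $2$-handle by taking parallel push-offs of the cocore disk, with the signed cocore count chosen so that $[\Sigma] \in H_2(W;\mathbb{Z})$ realizes the prescribed class $\alpha$. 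The resulting $(\Sigma, (B, L_a, v))$ is a valid lasagna filling in homological level $\alpha$.

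The well-definedness of $\Phi$ on the colimit amounts to a geometric check of each relation in Definition \ref{def:cabledkh}. The braid-factoring-through-symmetric-group relation $\beta(b)(v) \sim v$ corresponds to the fact that inside a $2$-handle, thickened to $D^2 \times D^2$, any braiding of parallel cable disks is undone by an ambient isotopy that rotates the $D^2$-direction. The dotted annulus relation $\KhR_2(\acupdot)(v) \sim v$ corresponds to capping off a pair of oppositely oriented cable strands inside a $2$-handle: up to an isotopy of the meatball, this is precisely composition with the induced cobordism $\symmetrized{\acupdot}$, and hence the two lasagna fillings represent the same class in $\skein_0^2$.

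For surjectivity, I would put an arbitrary lasagna filling $\mathcal{F}$ into canonical form using the Morrison-Walker-Wedrich operations. First, use the multi-input-to-single-input fusion together with $\KhR_2$-functoriality to merge all meatballs into a single input ball $B \subset B^4$. Next, isotope the remaining cobordism surface $\Sigma$ into transverse position with each $2$-handle cocore, and further arrange $\Sigma$ in a neighborhood of each $2$-handle to consist of $2a_i + |\alpha_i|$ parallel cocore-like disks of prescribed orientations; the unsigned intersection number with the belt sphere forces the $a_i$, while the signed pairing forces $\alpha_i$ via Poincar\'e--Lefschetz duality. This realizes $\mathcal{F}$ as $\Phi(v)$ for some $(a, v)$.

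The main obstacle is injectivity: one must argue that the lasagna equivalence relation on canonical-form fillings is generated by \emph{exactly} the colimit relations. The strategy is to analyze any local replacement $\mathcal{F}_1 \sim \mathcal{F}_2$ as follows. By shrinking $B$ and using sweep-around invariance of $\KhR_2$, one may assume the replacement is supported either in a ball disjoint from the $2$-handle cores, in which case it merely changes the label $v$ by $\KhR_2$ of the local movie, or in a neighborhood of a single $2$-handle, where it can be decomposed into elementary cups, caps and saddles each of which either changes the index $a$ by a unit vector (applying $\symmetrized{\acupdot}$) or permutes strands (applying a symmetric-group element). The delicate point --- and what I expect to be the genuine difficulty --- is carefully arranging arbitrary ambient cobordisms between canonical forms into this standardized decomposition without introducing spurious relations, and in particular handling the interaction between the braid-through-symmetric-group action and the dotted-cup maps so that the combined equivalence matches the colimit quotient exactly.
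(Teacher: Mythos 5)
This statement is cited from \cite{MN22} (Theorem 1.1) and the present paper gives no proof of it, so there is no ``paper's own proof'' to compare yours against; you are effectively reconstructing the Manolescu--Neithalath argument. Your definition of $\Phi$ (place one meatball in the $0$-handle labeled by a lift of $v$, extend the cables by parallel cocore push-offs in each $2$-handle) is the correct map, and the well-definedness checks --- that braiding the cable disks and capping off an antiparallel pair inside a $2$-handle realize the $\mathfrak{S}_{2r_i+|\alpha_i|}$-action and $\symmetrized{\acupdot}$ respectively --- are the right observations. One remark on surjectivity: you assert that $\Sigma$ can be arranged to meet each $2$-handle in parallel cocore-like disks, but this is not automatic; a priori $\Sigma \cap (D^2\times D^2)$ can carry genus and closed components. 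You need a compression argument --- isotope $\Sigma$ transverse to the belt sphere, keep the small parallel-disk piece near the cocore, and flow everything else into $B^4$ using the deformation retraction of $D^2\times D^2$ minus a neighborhood of the cocore onto the attaching region. It is brief but should be stated.

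The genuine gap, as you correctly diagnose, is injectivity. What you need is an inverse map $\skein_0^2(W;\emptyset,\alpha) \to \colim(\mathcal{D}^\alpha)$ which is well-defined, i.e.\ that different isotopies of a lasagna filling into canonical form change the resulting element of $\KhR_2(\symmetrized{L_a})$ only by the colimit relations, and that any lasagna relation between two canonical fillings factors through those same relations. You propose to decompose an arbitrary ambient cobordism between canonical forms into elementary pieces supported either in a ball away from the cocores (absorbed into the $\KhR_2$ label) or in a single $2$-handle neighborhood (producing $\symmetrized{\acupdot}$ or a transposition), but you then explicitly concede you do not know how to execute this decomposition ``without introducing spurious relations.'' That concession is where the proof stops being a proof. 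This is precisely the content that takes up the bulk of the argument in \cite{MN22}: one must control how a generic movie in $W$ interacts with the $2$-handle cocores (e.g.\ via a parametrized transversality argument and a careful count of which local events can occur near a cocore) and verify that the only relations generated are those in Definition~\ref{def:cabledkh}. As written, you have shown $\Phi$ is a well-defined surjection (modulo the surjectivity remark above), but not an isomorphism.
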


It will be useful to have a relative version of the construction above for a pair $(W,L)$ with a nontrivial, null-homologous boundary link. Specifically, let $L^{\text{att}}$ denote the framed link that the $2$-handles of a $2$-handlebody $B^{4}(L^{\text{att}})$ are attached along, and let $L$ denote a link in $\partial{(B^{4}(L^{\text{att}}))}$ such that $[L]=0\in{H_{1}(B^{4}(L^{\text{att}}))}$. Recall that we may always isotope the boundary link $L$ away from the attaching regions of the $2$-handles. The authors of \cite{MWWhandles} describe such a cabled Khovanov homology construction for this setup by considering cables of the form $L^{\text{att}}(r^{-},r^{+})\cup{L}$ as follows. Note that the braid group action defined above yields a braid group action $\beta_{i}:B_{r_{i}^{-},r_{i}^{+}}\rightarrow{\mathrm{Aut}{(\KhR_{2}(L^{\text{att}}(r^{-},r^{+})\cup{L})})}$, and similarly for the dotted annulus map we have an induced map 
\[ 
    \KhR_{2}(\acupdot):\KhR_{2}(L^{\text{att}}(r^{-},r^{+})\cup{L})\rightarrow{\KhR_{2}(L^{\text{att}}(r^{-}+e_{i},r^{+}+e_{i})\cup{L})}.
\]

\begin{definition}
    Let $L^{\text{att}}$ and $L$ be the $2$-handle attaching link and boundary link respectively, where $L^{\text{att}}$ has $k$ components. Then the \emph{relative cabled Khovanov homology} is defined as
    \[
    \underline{\KhR}_{2,\alpha}(L^{\text{att}},L):=\left(\bigoplus_{r\in{\mathbb{Z}_{\geq{0}}^{k}}}\KhR_{2}(L^{\text{att}}(r-\alpha^{-},r+\alpha^{+})\cup{L})\{-2|r|+|\alpha|\}\right)/\sim
    \]
    where the relation $\sim$ is the same as the relation in Definition \ref{def:cabledkh}. For the equivalent cabling directed systems definition, let $L_{a}:=L^{\text{att}}(a-\alpha^{-},a+\alpha^{+})\cup{L}$ for each $a\in{I}$. We then obtain a new directed system $\mathcal{A}^{\alpha}(W;L)$ whose colimit is identically $\underline{\KhR}_{2,\alpha}(L^{\text{att}},L)$.
\end{definition}

\begin{remark}\label{rmk:cabledlasagna}
    The above definition agrees with the \textit{cabled skein lasagna module} construction in \cite{MWWhandles} with $W=B^{4}$. that is, $\underline{\KhR}_{2,\alpha}(L^{\text{att}},L)\cong{\skein_{0}^{2}(B^{4}(L^{\text{att}});L,\alpha)}$. 
\end{remark}

\subsection{Construction of the Kirby-colored belt around $n$ strands}
\label{subsec:kirbybeltconstruct}

The setting for the content of Sections \ref{subsec:skeinlasagnamodules} and \ref{subsec:cabledkh} is a chain complex category over bigraded vector spaces. In our approach, we instead work with tangles and cobordisms in (completions of) Bar-Natan's categories. In this setting, we postpone closing up tangles and taking homology until after we compute (homotopy) colimits. The relationship between this approach and the method used in \cite{MN22} is discussed in Section \ref{sec:mainsec}. In this subsection, we construct our primary objects of study. Let $\one_n$ denote the identity braid on $n$ strands, and let $T_n$ denote the unoriented chain complex associated to the "identity tangle wearing a belt":
\[
    \includestandalone{tikz-imgs/Tn}
\]
Observe that $T_n^{\otimes k}$ denotes the identity braid on $n$ strands wearing $k$ parallel, unlinked belts. We now describe the action of the symmetric group $\mathfrak{S}_{k}$ on the chain complex $T_{n}^{\otimes{k}}$. There are two ribbon maps\footnote{also referred to as "cake pans" or "Bundt cake pans"}
\[
    \acup: \one_n \to T_n^{\otimes 2} 
    \qquad
    \acap: T_n^{\otimes 2} \to \one_n.
\]
The reader should note that these are shorthand symbols for cobordisms; for example, $\acup$ represents a cobordism that is topologically $\acup \times S^1$. We will sometimes also compose these maps; for example, $\acap \circ \acup$ is a torus (wrapped around the identity cobordism) and therefore represents the morphism $\one_n \map{2} \one_n$. This follows directly from the local relations in Figure \ref{fig:BNlocalrels}. We will also use dotted ribbon cobordisms, in which case we use the symbols $\acupdot,\acapdot$, as seen in \eqref{eq:dottedcup} and Figure \ref{fig:dottedcakepan}. We write the composition $\acup \circ \acap$ as $\capcup$. 


Consider the braid group action on the $k$ belt loops in $T_n^{\otimes k}$. Let $\sigma_i \in B_k$, and let $\Sigma_i: T_n^{\otimes k} \to T_n^{\otimes k}$ denote the cobordism corresponding to the movie where the $(i+1)$st belt grows wider and moves up and around the $i$th belt, interchanging them; the cobordism looks like $\sigma_i \otimes S^1$ near the belts, along with $n$ identity sheets corresponding to the $n$ vertical strands. Let $\Sigma_i\inv$ denote the upside-down cobordism (i.e.\ time-reversed movie). 
Grigsby-Licata-Wehrli in \cite{GLW-schur-weyl} show that the cobordisms $\{\Sigma_i\}_{i=1}^k$ satisfy the braid relations on the nose. Furthermore, they show the braid group $B_k$ action descends to an action by the symmetric group $\symgroup{k}$, which we now describe. 

Define the \emph{swap} endomorphism $s: T_n^{\otimes 2} \to T_n^{\otimes 2}$ by
\[
    s = \idmap - \capcup
\]
on the two belt loops.
Since $(\capcup)^2 = 2\:\capcup$ by the local relations, we have $s^2 = \idmap$. 
Define $s_i$ to be the corresponding swap endomorphism involving only the $i$th and $(i+1)$st belts: 
\[
    s_i = \idmap_{i-1} \otimes s \otimes \idmap_{k-(i+1)}
\]
Note that in Grigsby-Licata-Wehrli's conventions, a torus evaluates to -2; since our torus evaluates to +2, the corresponding statement of \cite[Proposition 9]{GLW-schur-weyl} is 
\begin{equation}
    \Sigma_i = s_i = \Sigma_i\inv.
\end{equation}
Thus $s_i$ is the morphism realizing the transposition of the $i$th and $(i+1)$st belts under the $\symgroup{k}$ action. 

In order to symmetrize the complex $P_n \otimes T_n^{\otimes k}$ under the $\symgroup{k}$ action, we consider the morphism 
\[
    e_k := \frac{1}{k!} \sum_{g \in \symgroup{k}} g \quad \in \quad \End(T_n^{\otimes k}).
\]

\begin{definition}
    Let $\CC$ be a dg-category, a \emph{homotopy idempotent} $e\in{\End{(X)}}$ is a closed degree 0 endomorphism of some object $X$ such that $e^{2}\sim{e}$. Equivalently, it is an idempotent in the homotopy category of $\CC$. Furthermore, an object $X$ is an \emph{image} of a homotopy idempotent $e$ if there exist closed degree 0 maps $f:X\rightarrow{Y}$ and $g:Y\rightarrow{X}$ such that $f\circ{g}\sim{e}$ and $g\circ{f}\sim{\Id_{X}}$. (For more details, see \cite[Section 4]{GHWSoergel}.)
\end{definition}

By standard representation theory arguments, we have the following lemma.

\begin{lemma}
The endomorphism $e_k: T_n^{\otimes k} \to T_n^{\otimes k}$
is a homotopy idempotent, i.e.\ $e_k^2 \sim e_k$. 
\begin{proof}
The argument is standard and follows from the fact that left multiplication by any fixed $g \in \symgroup{k}$ gives a permutation of the set $\symgroup{k}$. 
Finally, note that we only have a homotopy equivalence between $e_k^2$ and $e_k$ because the action of the generators $s_i$ is defined only up to homotopy; the Reidemeister move equivalences are homotopy equivalences, not isomorphisms of complexes.
\end{proof}
\end{lemma}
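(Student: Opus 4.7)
The plan is to verify $e_k^2 \sim e_k$ by direct computation, using only the facts already established: namely, that the assignment $g \mapsto g$ gives a well-defined map $\symgroup{k} \to \End(T_n^{\otimes k})$ in the homotopy category (a consequence of Grigsby--Licata--Wehrli's result that the $\Sigma_i$ satisfy the braid relations and that the $B_k$-action descends through $\symgroup{k}$), and that each fixed $g$ acts on $\symgroup{k}$ by left multiplication as a bijection.

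First, I would expand
\[
    e_k^2 \;=\; \frac{1}{(k!)^2} \sum_{g,h \in \symgroup{k}} g\cdot h,
\]
where $g\cdot h$ denotes the composition of the representing endomorphisms in $\End(T_n^{\otimes k})$. For each fixed $g \in \symgroup{k}$, I would next invoke the fact that $g \cdot h \sim gh$ up to chain homotopy, where $gh$ is the image of the group product under the fixed lift used to define $e_k$. This is the one point where the ``only up to homotopy'' caveat enters: since the relations among the $s_i$ (and hence among arbitrary group elements expressed as words in them) are witnessed by Reidemeister equivalences, which are homotopy equivalences rather than equalities, the identity $g\cdot h = gh$ on endomorphisms holds only after passing to the homotopy category.

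Granting this, I would reindex the inner sum by $h' = gh$: since $h \mapsto gh$ is a bijection of $\symgroup{k}$, I obtain
\[
    \sum_{h \in \symgroup{k}} g \cdot h \;\sim\; \sum_{h' \in \symgroup{k}} h' \;=\; k!\, e_k.
\]
Summing over $g$ and dividing by $(k!)^2$ then yields
\[
    e_k^2 \;\sim\; \frac{1}{(k!)^2}\sum_{g \in \symgroup{k}} k!\, e_k \;=\; \frac{(k!)^2}{(k!)^2}\, e_k \;=\; e_k,
\]
which is exactly the claim.

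The only subtlety, which I would treat as the main bookkeeping obstacle, is to justify that after composing finitely many homotopies (one for each pair $(g,h)$), one still obtains a single chain homotopy between $e_k^2$ and $e_k$. This follows formally because $\End(T_n^{\otimes k})$ is a $\BF$-linear dg-category and homotopy is preserved under $\BF$-linear combinations; a finite sum of null-homotopic terms is null-homotopic via the corresponding $\BF$-linear combination of homotopies. Apart from this, no new homological machinery is needed, and the lemma reduces to the standard representation-theoretic calculation for the symmetrizer $\frac{1}{k!}\sum_g g$ in the group algebra of $\symgroup{k}$.
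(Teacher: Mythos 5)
Your proof is correct and follows exactly the approach the paper has in mind: expand $e_k^2$, use that each $g\cdot h$ is homotopic to the composite permutation $gh$ (the "up to homotopy" caveat), reindex via the bijection $h \mapsto gh$, and observe that an $\BF$-linear combination of chain homotopies is again a chain homotopy. The paper's proof is just a one-line summary of this standard symmetrizer calculation, so you have simply spelled out the details it leaves implicit.
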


Gorsky-Hogancamp-Wedrich in \cite{GHWSoergel} prove that the homotopy category $K(\CC)$ of a Karoubian category $\CC$ is also Karoubian, and the images of homotopy idempotents are unique up to homotopy. By \cite{Gorsky-Wedrich} Theorem A.10, bounded homotopy categories of Karoubian categories are Karoubian, so the images of homotopy idempotents are guaranteed to exist.

\begin{definition}\label{def:symmetrizedobjects}
Let $\symmetrized{T_n^{\otimes k}}$ denote an image of $T_{n}^{\otimes{k}}$ under the idempotent $e_k$ in $K(\Kar(\Kom(\TLcat_{n})))$. 
There exist maps 
\begin{center}
\begin{tikzcd}
    T_n^{\otimes k}
        \arrow[r, bend left, "p_k"]
    & \symmetrized{T_n^{\otimes k}}
        \arrow[l, bend left, "i_k"] 
\end{tikzcd}
\end{center}
such that 
\begin{equation}
    \label{eq:homotopy-projector}
    i_k \circ p_k \sim e_k 
    \qquad 
    \text{and}\qquad
    p_k \circ i_k \sim \idmap_{\symmetrized{T_n^{\otimes k}}}.
\end{equation}

For a morphism $f: T_n^{\otimes k} \to T_n^{\otimes l}$, let 
$\symmetrized{f} := p_l \circ f \circ i_k$
denote the induced morphism $\symmetrized{T_n^{\otimes k}} \to \symmetrized{T_n^{\otimes l}}$. 
\end{definition}

We verify that the map induced by the undotted annulus is identically $0$ on symmetrized $T_{n}^{\otimes{k}}$ complexes.

\begin{lemma}
Let $k \in \N$. Let $\acup: T_n^{\otimes k} \to T_n^{\otimes k+2}$ be the ribbon map that introduces the last pair of belts and is the identity sheet on all other components. Then $\symmetrized{\acup} \simeq 0$.
\begin{proof}
By \eqref{eq:homotopy-projector}, 
\begin{equation*}
\symmetrized{\acup} 
    = p_{k+2} \circ \acup \circ i_k 
    = p_{k+2} \circ i_{k+2} \circ p_{k+2} \circ \acup \circ i_k 
    = p_{k+2} \circ (e_{k+2} \circ \acup) \circ i_k.
\end{equation*}
So, it suffices to show that $e_{k+2} \circ \acup=0$. Note that, letting $s_{k}$ denote the swap endomorphism on the $k$th and $(k+1)$th strands, $e_{k+2}$ is the sum of all compositions of $s_{j}$, $j\in{\{1,...,k+1\}}$. Note that $s_{k+1}\circ{\acup}=-\acup$, and also that the set of permutations in $\mathfrak{S}_{k+2}$ can be decomposted into pairs $(g,g\circ s_{k+1})$. We then have that any permutation composed with the cup map gives
    \[
    g\circ{\acup}+g\circ{s_{k+1}}\circ{\acup}=g\circ{\acup}-g\circ{\acup}=0.
    \]
    Thus, $e_{k+2}\circ{\acup}=0$.

\end{proof}
\end{lemma}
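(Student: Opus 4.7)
The plan is to reduce to showing $e_{k+2} \circ \acup \sim 0$ and then to exploit the fact that a specific adjacent transposition in $\symgroup{k+2}$ acts on $\acup$ by the scalar $-1$. First, unpacking Definition \ref{def:symmetrizedobjects}, I would write $\symmetrized{\acup} = p_{k+2} \circ \acup \circ i_k$. Using \eqref{eq:homotopy-projector}, namely $p_{k+2} \sim p_{k+2} \circ i_{k+2} \circ p_{k+2}$ and $i_{k+2} \circ p_{k+2} \sim e_{k+2}$, this becomes $\symmetrized{\acup} \sim p_{k+2} \circ e_{k+2} \circ \acup \circ i_k$. So it suffices to show $e_{k+2} \circ \acup \sim 0$.

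The key geometric input is the action of the transposition $s_{k+1} = \idmap - \capcup$ (acting on the last two belts) on $\acup$, which creates exactly those last two belts. A direct Bar-Natan computation using the local relations of Figure \ref{fig:BNlocalrels} shows $\capcup \circ \acup = \acup \circ \acap \circ \acup$, where the inner composition $\acap \circ \acup$ is a closed torus (times the identity sheets on the $n$ vertical strands), and neck-cutting evaluates the torus to $2$ in our conventions. Hence $\capcup \circ \acup = 2\,\acup$ and therefore $s_{k+1} \circ \acup = \acup - 2\,\acup = -\acup$.

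Now I would conclude using the standard invariance of the symmetrizer: since $e_{k+2} = \frac{1}{(k+2)!}\sum_{g \in \symgroup{k+2}} g$, right multiplication by any element of $\symgroup{k+2}$ permutes the summands and fixes $e_{k+2}$, in particular $e_{k+2} \circ s_{k+1} = e_{k+2}$. Combining this with the previous step,
\[
    e_{k+2} \circ \acup = e_{k+2} \circ s_{k+1} \circ \acup = e_{k+2} \circ (-\acup) = -\, e_{k+2} \circ \acup,
\]
so $2\, e_{k+2} \circ \acup = 0$, and since we work over a field of characteristic $0$, $e_{k+2} \circ \acup = 0$. Composing with $p_{k+2}$ and $i_k$ gives $\symmetrized{\acup} \sim 0$. (Alternatively, one can simply pair each $g \in \symgroup{k+2}$ with $g \cdot s_{k+1}$ and observe the two summands cancel.)

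The only subtle point is making sure that the relations $s_{k+1} \circ \acup = -\acup$ and $e_{k+2} \circ s_{k+1} = e_{k+2}$ are valid on the nose (or up to homotopy in a controlled way) in $K(\Kar(\Kom(\TLcat_n)))$; both reduce to the Bar-Natan local relations and the definition of $e_{k+2}$ as a formal sum, so no genuine obstruction arises. The main conceptual step is recognizing that the newly created pair of belts is acted on antisymmetrically by the adjacent swap, which is what forces the symmetrizer to annihilate the image.
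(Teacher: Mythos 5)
Your proof is correct and takes essentially the same approach as the paper's: reduce to $e_{k+2}\circ\acup=0$ via \eqref{eq:homotopy-projector}, compute $s_{k+1}\circ\acup=-\acup$ from the torus evaluation, and conclude by exploiting the antisymmetry under the adjacent transposition. The only cosmetic difference is that your primary conclusion uses $e_{k+2}\circ s_{k+1}=e_{k+2}$ together with $\operatorname{char}\mathbb{F}=0$ to divide by $2$, whereas the paper pairs each $g$ with $g\circ s_{k+1}$ directly — but you note that pairing argument yourself, so the two are interchangeable.
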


Let $\underline{\mathcal{TL}}^{\oplus}$ denote the category $\Ind(K(\Kar(\Kom{(\TLcat_{n})})))$. We are now ready to define the Kirby-belted identity tangle $T_{n}^{\omega_{\alpha}}$.

\begin{definition}\label{def:dirsys}
For $\alpha \in \N$, let $T_n^{\omega_\alpha} \in \underline{\mathcal{TL}}^{\oplus}$ denote the colimit of the directed system
\[
    \mathcal{A}_{n}^{\alpha} := 
    \left (
        \symmetrized{T_n^{\otimes \alpha}}
        \map{\symmetrized{\acupdot}}
        \symmetrized{T_n^{\otimes \alpha+2}}
        \map{\symmetrized{\acupdot}}
        \symmetrized{T_n^{\otimes \alpha+4}}
        \map{\symmetrized{\acupdot}}
        \cdots
    \right ).
\]

\end{definition}

Note that only the parity of $\alpha$ matters on the level of colimits, so there are only two Kirby-belted identity objects, corresponding to $\alpha=0,1$. For $n$ vertical strands, we denote $\colim(\mathcal{A}_{n}^{0})$ and $\colim(\mathcal{A}_{n}^{1})$ by $T_{n}^{\omega_{0}}$ and $T_{n}^{\omega_{1}}$ respectively. We also consider the homotopy colimits of $\mathcal{A}_{n}^{0}$ and $\mathcal{A}_{n}^{1}$, described as follows.

\begin{definition}
    Let $T_n^{\Omega_\alpha}$ denote the homotopy colimit of the directed system $\mathcal{A}_{n}^{\alpha}$ in Definition \ref{def:dirsys}. In particular, $T_{n}^{\Omega_{\alpha}}$ is the total complex $\Tot(\mathcal{D}_{\mathcal{A}_{n}^{\alpha}})$ of the double complex $\mathcal{D}_{\mathcal{A}_{n}^{\alpha}}$ associated to $\mathcal{A}_{n}^{\alpha}$ as in Figure \ref{fig:double-complex-B}. (see Figure \ref{fig:TnOmega0} for the double complex representing $T_{n}^{\Omega_{0}}$).
\end{definition}

\begin{figure}[!h]
    \begin{center}
        \includegraphics[width=.9\linewidth]{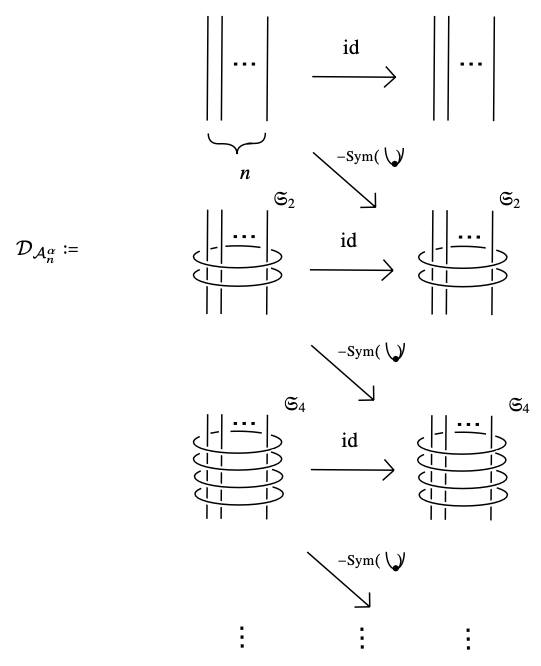}
    \end{center}
    \caption{For $\alpha=0$. The diagrams $(T_{n}^{\otimes{k}})^{\mathfrak{S}_{k}}$ denote the symmetrized complexes $\symmetrized{T_{n}^{\otimes{k}}}$.}
    \label{fig:TnOmega0}
\end{figure}

Note that $T^{\omega_{\alpha}}_{n}$ and $T^{\Omega_{\alpha}}_{n}$ are equivalent by Proposition \ref{prop:totB-is-colim}, so for the remainder of this work we will denote this object only by $T_{n}^{\Omega_{\alpha}}$, and label Kirby-colored components with $\Omega_{\alpha}$. 

\begin{remark}\label{rmk:annularkirby} Despite using `Kirby-colored' terminology, we work with homotopy colimits of tangle complexes not in the annular setting, so our construction is different from that of \cite{hogancamp2022kirby}. The \emph{$i$th Kirby object} (\cite{hogancamp2022kirby}) $\omega^{\prime}_{i}$ for $i\geq{0}$ is an object in the Ind-completion of the additive closure of the Karoubi envelope of the annular Bar-Natan category represented by the colimit 
\[
\omega^{\prime}_{i}:=\left(q^{-i}P_{i}\rightarrow{q^{-(i+2)}P_{i-2}\rightarrow q^{-(i+4)}}P_{i-4}\rightarrow{\cdots}\right)
\]
where the arrows are certain dotted maps between projectors. Letting $L^{\text{att}}$ once again denote the framed oriented link that a $2$-handle is attached to $B^{4}$ along, and letting $L$ be a boundary link in $\partial(B^{4}(L^{\text{att}}))$, a theorem of Hogancamp-Rose-Wedrich can be stated as follows.

\begin{theorem}[\cite{hogancamp2022kirby}, Theorem C]
    Let $(B^{4}(L^{att}),L)$ denote the $4$-manifold obtained by attaching a $2$-handle to $B^{4}$ along the $n$ component link $L^{att}$ and let $\omega^{\prime}_{\underline{i}}$ denote a collection of Kirby objects where $\underline{i}\in{\{0,1\}^{n}}$. Decorate the $n$ components of $L^{att}$ with $n$ Kirby objects $\omega^{\prime}_{i}$ for $i\in{\{0,1\}}$ (In other words, decorate $L^{att}$ with $\omega^{\prime}_{\underline{i}}$). Then the following bigraded vector spaces are isomorphic:
    \begin{enumerate}
        \item[(a)] The Kirby-colored Khovanov homology $\Kh(L\cup{(L^{att})^{\omega^{\prime}_{\underline{i}}})}$
        \item[(b)] The relative cabled Khovanov homology of $L\cup{L^{att}}$ at homological level $\underline{i}$.
        \item[(c)] The $N=2$ skein lasagna module of $(B^{4}(L^{att}),L)$ at homological level $\underline{i}$.
    \end{enumerate}
\end{theorem}

The reason that items (a) and (c) are equivalent to (b) is because the relative cabled Khovanov homology of $L\cup{L^{att}}$ is precisely the Manolescu-Walker-Wedrich \emph{cabled skein lasagna} construction in \cite{MWWhandles} for a $4$-manifold with no $1$ or $3$-handles and an arbitrary link $L$ in the boundary.
\end{remark}

\section{Skein lasagna module computations using Kirby-colored belts}\label{sec:mainsec}

Let $\{p_1, \ldots, p_n\}$ be a collection of $n$ distinct points on $S^2$. 
In this section, we compute the skein lasagna module $\skein_0^2(S^2 \times B^2; \widetilde{\one}_{n})$,
where $\widetilde{\one}_{n}$ is the geometrically essential link $\{ p_1, \ldots, p_n \} \times S^1 \subset S^1 \times S^2 = \partial W$ (see Figure \ref{fig:geom-essen}). 

\begin{figure}[!h]
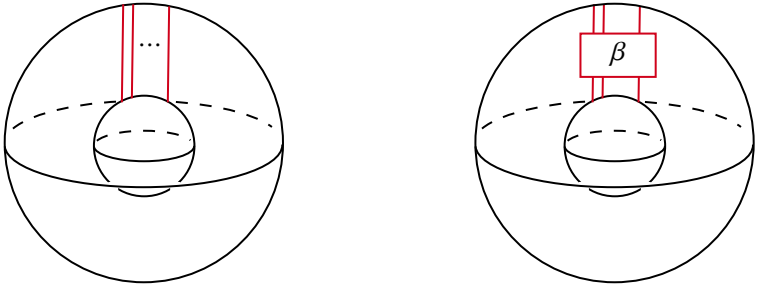

    \begin{center}
        \includestandalone{tikz-imgs/geom-essential1}
    \end{center}
    \caption{\textbf{Left:} The $n$ component link $\widetilde{\one}_{n}$ in $S^{1}\times{S^{2}} = [0,1] \times S^2 / (0,p) \sim (1,p)$. \textbf{Right:} An example of a geometrically essential link $\widetilde{\beta}$ given by a braid $\beta$ in $S^{1}\times{S^{2}}$.} 
    \label{fig:geom-essen}
\end{figure}

\begin{remark}
    There is a notational ambiguity when referring to links in $S^{1}\times{S^{2}}$ and the closures of links more generally. Throughout this section, we use the symbol $\widehat{\one}_{n}$ when referring to the usual closure of the identity braid in the (thickened) plane, and use the symbol $\widetilde{\one}_{n}$ when referring to the specific link in $S^{1}\times{S^{2}}$ shown on the left in Figure \ref{fig:geom-essen}.
\end{remark}

\subsection{Equivalence of $H^{*}(\close({T}_{n}^{\Omega_{\alpha}}))$ and $\skein^{2}_{0}(S^{2}\times{B^{2}};\widetilde{\one}_{n},\alpha)$}
Here we confirm that the homology of the trace of $T^{\Omega_{\alpha}}_{n}$ is isomorphic to the skein lasagna module of $S^{2}\times{B^{2}}$ with $\widetilde{\one}_{n}$ in the boundary. 

\begin{definition}\label{def:trace}
    Let $\close:\TLcat_{n}\rightarrow{\BNcat}$ denote the trace functor.
    \[
        \includestandalone[scale=0.8]{tikz-imgs/Tracefunctor}
    \]
\end{definition}

By functoriality, we may take 
the tangle closure as in Definition \ref{def:trace} of each term in the double complex $\mathcal{D}_{\mathcal{A}^{\alpha}_{n}}$,
then take the homology of each term to obtain a
{double complex of (bigraded) vector spaces 
$\widehat{\mathcal{D}}_{\mathcal{A}^{\alpha}_{n}}$  depicted in Figure \ref{fig:traceofhocolim}.}

\begin{figure}[!h]
    \begin{center}
    \includegraphics[width=.8\linewidth]{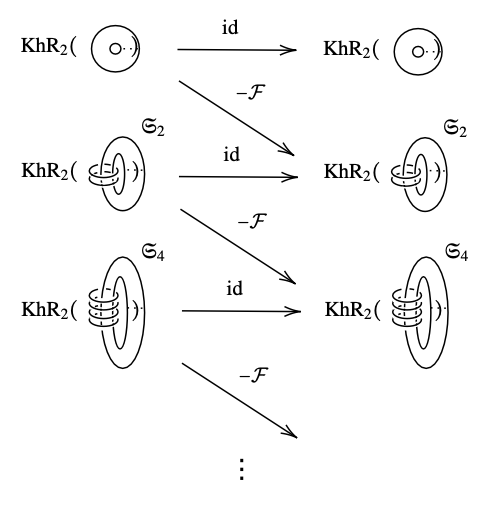}
    \end{center}
    \caption{The {double} complex $\widehat{\mathcal{D}}_{\mathcal{A}^{\alpha}_{n}}$ associated to the closure of $T_{n}^{\Omega_{0}}$, where $\mathcal{F}$ denotes the morphism induced by the symmetrized dotted annulus map. The quantum degree shifts from the definition of cabled Khovanov homology have been suppressed.
    }
    \label{fig:traceofhocolim}
\end{figure}

{
The totalization of this double complex is 
\[
\Tr(T_n^{\Omega_\alpha}) := \Tot(
\widehat{\mathcal{D}}_{\mathcal{A}^{\alpha}_{n}}
)
=\bigoplus_{k=0}^{\infty}\KhR_{2}(\symmetrized{\close(T_{n}^{\otimes{2k}+\alpha}}))\xrightarrow{\idmap-\mathcal{F}}\bigoplus_{k=0}^{\infty}\KhR_{2}(\symmetrized{\close(T_{n}^{\otimes{2k}+\alpha}})).
\]
where $\mathcal{F}$ is comprised of the morphisms induced by symmetrized dotted ribbon maps. 
}

Then, by Lemma \ref{lem:Hcone} and Corollary \ref{cor:Hcone}, we obtain the following proposition.

\begin{proposition}
    \label{prop:everythingthesame}
     Let $U_{0}\cup{\widehat{\one}_{n}}$ denote the link obtained by the tangle closure of $T_{n}$, where the belt is a $0$-framed unknot $U_{0}$. We have the following isomorphisms of vector spaces:
     
    \begin{align*}
        H^{*}(\close({T}^{\Omega_{\alpha}}_{n}))
        &\cong{H^{*}(\Tot(\widehat{\mathcal{D}}_{\mathcal{A}^{\alpha}_{n}}}))\\
        &\cong{\colim{(\mathcal{B}^{\alpha}({S^{2}\times{B^{2}};\widetilde{\one}_{n}})})}\\
        &\cong{\underline{\KhR}_{2,\alpha}(U_{0},\widehat{\one}_{n})}\\
        &\cong{\skein_{0}^{2}(S^{2}\times{B^{2}};\widetilde{\one}_{n}},\alpha)
        \end{align*}

    \begin{proof}
        The first isomorphism $H^{*}(\close(T_{n}^{\Omega_{\alpha}}))\cong{H^{*}(\Tot{(\widehat{\mathcal{D}}_{\mathcal{A}^{\alpha}_{n}})})}$ is an application of Lemma \ref{lem:Hcone} and Corollary \ref{cor:Hcone}. The second and third isomorphisms follow from the observation that the homology of $\Tot{(\widehat{\mathcal{D}}_{\mathcal{A}^{\alpha}_{n}})}$ is manifestly the relative cabled Khovanov homology $\underline{\KhR}_{2,\alpha}(U_{0},\widehat{\one}_{n})$, which is isomorphic to $\colim{(\mathcal{B}^{\alpha}(S^{2}\times{B^{2}};\widehat{\one}_{n}}))$ by construction, and is furthermore isomorphic to $\skein^{2}_{0}(S^{2}\times{B^{2}};\widetilde{\one}_{n},\alpha)$ by Remark \ref{rmk:cabledlasagna}.
    \end{proof}
\end{proposition}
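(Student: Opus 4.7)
The plan is to chain together four isomorphisms, each of which is either a direct application of a result stated earlier in the paper or a straightforward unraveling of definitions. The proposition is really an assembly statement rather than a new theorem: its content is that our homotopy-colimit model on the Bar-Natan side agrees, after closing up and taking homology, with the skein lasagna module of $(S^2\times B^2;\widetilde{\one}_n)$ at level $\alpha$.

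For the first isomorphism, I would first observe that the trace functor $\close$ is additive and commutes with totalization, so that $\close(T_n^{\Omega_\alpha})=\Tot(\widehat{\mathcal{D}}^\alpha_{\mathcal{A}_n})$ is itself a two-term total complex whose horizontal differential is $\idmap-\mathcal{F}$, where $\mathcal{F}$ is assembled from the maps $\symmetrized{\acupdot}_*$ on homology. Taking vertical homology first (i.e.\ computing $\KhR_2$ of each closed symmetrized cable) turns $\close(T_n^{\Omega_\alpha})$ into a two-term complex of bigraded vector spaces whose cone representation is exactly $\widehat{\mathcal{D}}^\alpha_{\mathcal{A}_n}$. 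Then Lemma~\ref{lem:Hcone} together with Corollary~\ref{cor:Hcone} gives
\[
    H^{*}(\close(T_n^{\Omega_\alpha}))\cong H^{*}(\cone(\widehat{\mathcal{D}}^\alpha_{\mathcal{A}_n})),
\]
since passing to homology in the columns of the double complex commutes with taking the cone in the horizontal direction (the vertical differentials are the internal ones in each $\KhR_2$ complex).

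For the second isomorphism, I would invoke Proposition~\ref{prop:totB-is-colim}, which identifies $\Tot(\widehat{\mathcal{D}}^\alpha_{\mathcal{A}_n})$ with the homotopy colimit of the directed system
\[
    \KhR_2(\symmetrized{\close(T_n^{\otimes\alpha})})\xrightarrow{\mathcal{F}_0}\KhR_2(\symmetrized{\close(T_n^{\otimes\alpha+2})})\xrightarrow{\mathcal{F}_1}\cdots
\]
in $\ggVect$. Because filtered colimits in $\ggVect$ are exact, the homotopy colimit and the ordinary colimit agree on the level of homology. This directed system is, by construction (Definition~\ref{def:CDS} applied to $W=S^2\times B^2$ with attaching link the $0$-framed unknot and boundary link $\widehat{\one}_n$), the cabling directed system $\mathcal{D}^\alpha(S^2\times B^2;\widetilde{\one}_n)$, so the two colimits agree. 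The third isomorphism is then literally the definition of $\underline{\KhR}_{2,\alpha}(U_0,\widehat{\one}_n)$ as the colimit of this cabling system.

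The fourth isomorphism with $\skein_0^2(S^2\times B^2;\widetilde{\one}_n,\alpha)$ is provided by Remark~\ref{rmk:cabledlasagna}, which records the $2$-handle formula of Manolescu--Neithalath \cite{MN22} in the relative setting of \cite{MWWhandles} for $W=B^4(U_0)=S^2\times B^2$. The only substantive point to verify is really the first isomorphism, i.e.\ that taking vertical homology in $\widehat{\mathcal{D}}^\alpha_{\mathcal{A}_n}$ is compatible with the horizontal cone, and that closure and symmetrization are compatible with the ribbon maps $\symmetrized{\acupdot}$ on the level of $\KhR_2$; both follow from the functoriality of $\KhR_2$ and from the projector/inclusion identities \eqref{eq:homotopy-projector}. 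There is no real obstacle — the work was done in setting up the definitions — and the proof amounts to threading these four equivalences together in order.
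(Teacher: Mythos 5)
Your proposal is correct and follows the paper's argument essentially step for step: the first isomorphism is obtained by applying Lemma \ref{lem:Hcone} and Corollary \ref{cor:Hcone} after observing that the trace functor and term-by-term homology convert $\close(T_n^{\Omega_\alpha})$ into the two-term complex $\widehat{\mathcal{D}}^\alpha_{\mathcal{A}_n}$; the middle identifications are an unwinding of Definitions \ref{def:cabledkh} and \ref{def:CDS} (the paper phrases this as the homology of the cone being ``manifestly'' the relative cabled Khovanov homology, which is the colimit by construction — the same content as your exactness-of-filtered-colimits observation, since the kernel of $\idmap-\mathcal{F}$ on $\bigoplus_k$ vanishes and the cokernel is the ordinary colimit); and the last isomorphism is Remark \ref{rmk:cabledlasagna}. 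The one cosmetic difference is that you invoke Proposition \ref{prop:totB-is-colim} for the homotopy-colimit identification, whereas in $\ggVect$ this is immediate from the telescope description, but this does not change the substance of the argument.
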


Our results involve the link $\widetilde{\one}_{n}$ because we consider belts encircling the $n$-strand identity braid.
However, by stacking braids, we obtain similar results for other geometrically essential links in $\partial(S^{2}\times{B^{2}})=S^{1}\times{S^{2}}$. Letting $\beta$ denote the tangle complex associated to an $n$-strand braid, we can replace each copy of $T_{n}^{\otimes{\alpha+2k}}$ above with the tangle complex $\beta\otimes{T_{n}^{\otimes{\alpha+2k}}}$. 
Let $\widehat{\mathcal{D}}_{\mathcal{A}^{\alpha}_{n},\beta}$ denote the double complex obtained from $\mathcal{D}_{\mathcal{A}_{n}^{\alpha}}$ with each $\symmetrized{T_{n}^{\otimes{\alpha+2k}}}$ term replaced by $\symmetrized{\beta\otimes{T_{n}^{\otimes{\alpha+2k}}}}$ and trace and homology taken term-by-term. Let $\close(\beta\otimes{T_{n}^{\Omega_{\alpha}})}$ denote $\Tot{(\widehat{\mathcal{D}}_{\mathcal{A}^{\alpha}_{n},\beta})}$.

\begin{corollary}\label{cor:everythingthesame(w/braid)}
    Taking the trace of $\beta\otimes{T_{n}^{\otimes{k}}}$ and taking homology, we obtain isomorphisms:
    
    \begin{align*}H^{*}(\close(\beta\otimes{T_{n}^{\Omega_{\alpha}}}))&\cong{H^{*}(\Tot{(\widehat{\mathcal{D}}_{\mathcal{A}^{\alpha}_{n},\beta})}})\\
    &\cong{\colim{(\mathcal{B}^{\alpha}(S^{2}\times{B^{2}};\widetilde{\beta}}))}\\
    &\cong{\underline{\KhR}_{2,\alpha}(U_{0},\widehat{\beta})}\\
    &\cong{\skein_{0}^{2}(S^{2}\times{B^{2}};\widetilde{\beta},\alpha)}
    \end{align*}

\end{corollary}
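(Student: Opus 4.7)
The plan is to mirror the proof of Proposition \ref{prop:everythingthesame} essentially verbatim, using the observation that stacking the braid $\beta$ onto the vertical strands of $T_n^{\otimes k}$ commutes functorially with each step in the construction: the symmetrization $\symmetrized{\,\cdot\,}$ (which only acts on the belts), the tangle closure $\close$, and the application of $\KhR_2$. Thus the directed system $\symmetrized{\beta \otimes T_n^{\otimes k}} \map{\symmetrized{\acupdot}} \symmetrized{\beta \otimes T_n^{\otimes k+2}} \to \cdots$ is simply the $\beta$-twisted version of $\mathcal{A}_n^{\alpha}$.

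First, I would establish the first isomorphism $H^*(\close(\beta \otimes T_n^{\Omega_\alpha})) \cong H^*(\cone(\widehat{\mathcal{D}}^\alpha_{\mathcal{A}_n, \beta}))$ by applying Corollary \ref{cor:Hcone} to the homotopy colimit defining $\close(\beta \otimes T_n^{\Omega_\alpha})$. Explicitly, this is the totalization of the double complex obtained from $\mathcal{D}^\alpha_{\mathcal{A}_n, \beta}$ by applying the trace functor $\close$ level-by-level. Corollary \ref{cor:Hcone} lets us replace each column by its homology, which by definition is $\KhR_2(\close(\symmetrized{\beta \otimes T_n^{\otimes k+\alpha}}))$, producing exactly $\widehat{\mathcal{D}}^\alpha_{\mathcal{A}_n, \beta}$.

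Second, I would identify $\cone(\widehat{\mathcal{D}}^\alpha_{\mathcal{A}_n, \beta})$ with the colimit of the cabling directed system $\mathcal{D}^\alpha(S^2 \times B^2; \widetilde{\beta})$. Under the identification $S^2 \times B^2 \cong B^4(U_0)$ where $U_0$ is the 0-framed unknot attaching circle, the closure $\close(\beta \otimes T_n^{\otimes k})$ is precisely $U_0(k-\alpha^-, k+\alpha^+) \cup \widehat{\beta}$ up to an overall quantum shift, with the belt loops encircling the braid closure $\widehat{\beta}$. The symmetrized dotted annulus maps $\symmetrized{\acupdot}$ translate directly into the maps of the cabling directed system, so the cone computes exactly the colimit. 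From here the third isomorphism is the definition of the relative cabled Khovanov homology $\underline{\KhR}_{2,\alpha}(U_0, \widehat{\beta})$, and the fourth isomorphism is Remark \ref{rmk:cabledlasagna} applied with $L^{\text{att}} = U_0$ and $L = \widehat{\beta}$.

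The only real subtlety, and what I would view as the main thing to verify carefully, is the compatibility between the symmetric group action on parallel belts used in the construction of $\symmetrized{T_n^{\otimes k}}$ and the braid group action on parallel cables defining $\underline{\KhR}_{2,\alpha}$. Because $\beta$ acts on strands that are entirely disjoint from the belts, the $\symgroup{k}$ action used to form $\symmetrized{\beta \otimes T_n^{\otimes k}}$ is literally the same as the one appearing in Definition \ref{def:cabledkh} after taking the closure, so this compatibility is immediate; no new argument is required beyond what was already done in Proposition \ref{prop:everythingthesame}.
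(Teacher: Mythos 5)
The paper states this corollary without a separate proof; it follows immediately from the discussion preceding it, where $\widehat{\mathcal{D}}^{\alpha}_{\mathcal{A}_n,\beta}$ and $\close(\beta\otimes T_n^{\Omega_\alpha})$ are defined exactly so that the argument from Proposition \ref{prop:everythingthesame} carries over verbatim with $\symmetrized{T_n^{\otimes k}}$ replaced by $\symmetrized{\beta \otimes T_n^{\otimes k}}$. Your proposal spells out this substitution correctly, applies Corollary \ref{cor:Hcone} and Remark \ref{rmk:cabledlasagna} in the same places, and correctly identifies the one compatibility point (that the $\symgroup{k}$-action on the belts is the same action as in Definition \ref{def:cabledkh} since $\beta$ lives on disjoint strands), so this is essentially the paper's intended argument.
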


Thus, we are able to study the skein lasagna modules of pairs $(S^{2}\times{B^{2}},\widetilde{\beta})$ by studying the homotopy colimits $T_{n}^{\Omega_{\alpha}}$ and $\beta\otimes{T_{n}^{\Omega_{\alpha}}}$. 
We begin the study of these homotopy colimits by calculating $\symmetrized{P_{n}\otimes{T_{n}^{\otimes{k}}}}$ for $k \geq 0$ computing $P_n \otimes T_n^{\Omega_\alpha}$ for $i = 0,1$.

\subsection{Projectors wearing symmetrized belts}

Our first goal is to explicitly describe the symmetric part of $P_n \otimes T_n^{\otimes k}$ under the $\symgroup{k}$ action permuting the $k$ belts. Let us first consider the complex $P_n \otimes T_n$, i.e.\ a projector wearing one belt. 

\begin{remark}
\label{rem:orientations}
Given a tangle diagram $D$, we first use the unoriented skein relation \eqref{eq:unoriented-skein-relation} to decompose the diagram into flat tangles. We then introduce the global bigrading shift dictated by \eqref{eq:oriented-skein-relations}. 
On the other hand, the projector $P_n$ is already defined to be an object in $\Kom(\TLcat_{n})$ (see Section \ref{sec:projectors}) with absolute gradings. 
\end{remark}

\begin{lemma}\cite[Corollary 3.51]{Hog-polyn-action}
The unoriented complex $P_{n}\otimes{T_{n}}$ splits as 
\begin{center}
    \includegraphics[width=.8\linewidth]{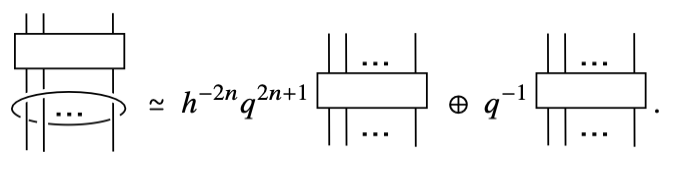}
\end{center}
\end{lemma}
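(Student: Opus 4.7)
The plan is to explicitly compute $P_n \otimes T_n$ by resolving the belt's crossings with the vertical strands and exploiting the absorption properties of $P_n$. The belt is a closed loop encircling the $n$ strands, so as a planar tangle diagram it contributes crossings with each strand (some positive and some negative depending on how one draws the encircling loop). I would apply the unoriented skein relation \eqref{eq:unoriented-skein-relation} to each of these crossings, expressing $T_n$ as a chain complex whose chain objects are flat tangles indexed by which crossings are resolved into the braid-like versus un-braid-like smoothings, with the appropriate $h$ and $q$ shifts recorded by the skein.

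Next, I would systematically simplify each resulting term. Any resolution that places a cup or cap (a ``turnback'') adjacent to the top of the strand region is annihilated after tensoring with $P_n$, by Lemma \ref{lem: projector eats crossings}(3). Free loops that appear in some resolutions can be removed via the delooping isomorphism of Section \ref{sec:BN-category-definition}, producing pairs of dotted/undotted summands with a $q + q\inv$ grading shift; Gaussian elimination then collapses these pairs, dramatically reducing the complex. The crossing-absorption property, Lemma \ref{lem: projector eats crossings}(4), further lets me replace any residual crossings between vertical strands adjacent to $P_n$ with identity braids up to homotopy. The surviving chain objects should then consist precisely of $P_n$ composed with a single strand wrapping once around the remaining $n-1$ strands, which is exactly the Jucys-Murphy braid $JM_n^{\pm}$. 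The two surviving summands correspond to the two senses in which the loop can be pulled through, giving $P_n \otimes JM_n^+$ and $P_n \otimes JM_n^-$ respectively (with explicit $h,q$ shifts produced by the accounting above).

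The main obstacle will be verifying that the connecting differentials between the two surviving $P_n \otimes JM_n^{\pm}$ summands vanish, so that $P_n \otimes T_n$ genuinely splits as a direct sum rather than as a nontrivial cone. I expect this to follow from a careful sign analysis of the iterated skein resolution, possibly streamlined by a symmetry argument invoking the $\symgroup{n-1}$-action permuting the strands through which the belt passes, or by exchanging the two senses of wrapping via an involution that commutes with $P_n$ up to homotopy. The grading bookkeeping, although routine once the conventions of Remark \ref{rem:orientations} are fixed, will also need attention so that the degree shifts match those indicated in the figure. Since the statement is attributed to \cite[Corollary 3.51]{Hog-polyn-action}, the cleanest path forward is to import the argument given there and translate it into the grading conventions of the present paper, rather than attempt a fully independent derivation.
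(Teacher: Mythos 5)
The paper does not actually prove this lemma: it is stated as a direct citation of \cite[Corollary 3.51]{Hog-polyn-action}, with no argument supplied. Your outline is nevertheless a reasonable reconstruction of the argument in Hogancamp's paper---resolve the belt crossings with the unoriented skein relation, annihilate the turnback-creating resolutions via Lemma \ref{lem: projector eats crossings}(3), deloop and Gaussian eliminate, and identify the surviving objects---and the Jucys--Murphy braids $JM_n^{\pm}$ you single out are indeed what appears in the figure this lemma references (they are defined immediately before it for precisely this purpose), so the shape of your answer is right.

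The genuine gap is in your plan for the splitting step. You propose resolving it by a ``careful sign analysis'' or an $\symgroup{n-1}$-symmetry argument, but neither of those is the right mechanism, and a sign analysis alone cannot preclude a nonzero connecting map between the two surviving summands. What actually kills the connecting differential is a bidegree obstruction coming from the $\Ext$-computation for $P_n$. Concretely, after simplification the two summands sit at the $h$-$q$ shifts $h^{-2n}q^{2n+1}$ and $q^{-1}$ recorded in Corollary \ref{cor:Pn-with-belt-split}; a component of the total differential from the former to the latter would be a class in $\Ext^{1-2n,\,2n+2}(P_n)$, whose bidegree $(i,j)$ satisfies $i+j=3$. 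Since $\Ext^*(P_n)$ is supported only in bidegrees with $i+j$ an even nonnegative integer---this is the content of Lemma \ref{lem:hogextgroups} and Lemma \ref{lem:doubledotdies}, quoting Hogancamp's Corollary 3.35 and Theorem 1.13---that $\Ext$-group vanishes, so no connecting map can exist and the complex splits. Replacing your sign/symmetry speculation with this $\Ext$-vanishing argument is the essential missing step; it is exactly the style of reasoning the present paper relies on repeatedly in Section \ref{sec:mainsec}.
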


\begin{corollary}
\label{cor:Pn-with-belt-split}
For the projector $P_{n}$ with $k$ belts, we have
\begin{center}
    \includegraphics[width=.7\linewidth]{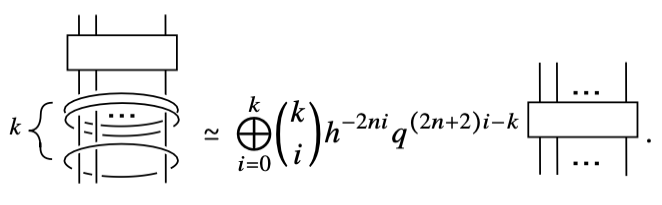}
\end{center}

\begin{proof}
Note that $P_{n}\otimes{T_{n}}\simeq{T_{n}\otimes{P_{n}}}$, and that $P_n$ is idempotent, so $(P_{n}\otimes{T_{n}})^{\otimes{k}}\simeq{P_{n}\otimes{T_{n}^{\otimes{k}}}}$. For the degree shifts, note that
$(h^{-2n}q^{2n+1})^i (h^0q^{-1})^{k-i}
= h^{-2ni} q^{(2n+2)i-k}$.
\end{proof}
\end{corollary}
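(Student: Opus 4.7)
The plan is to reduce to the one-belt case already established in the preceding lemma, then iterate and distribute across the tensor product. The two key tools are idempotency of $P_n$ (Lemma \ref{lem: projector eats crossings}(3), giving $P_n \simeq P_n^{\otimes k}$ for any $k \geq 0$) and the sliding equivalence $P_n \otimes T_n \simeq T_n \otimes P_n$. The latter equivalence is reasonable because a belt is locally a cobordism with identity sheets near the strands, so it commutes up to homotopy with any complex in $\Kom(\TLcat_n)$ supported on the strands; in particular with $P_n$.

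Using idempotency to insert extra copies of $P_n$ and then sliding each one through the appropriate belts, we get
\[
P_n \otimes T_n^{\otimes k} \;\simeq\; P_n^{\otimes k} \otimes T_n^{\otimes k} \;\simeq\; (P_n \otimes T_n)^{\otimes k}.
\]
Now apply the one-belt splitting $P_n \otimes T_n \simeq A \oplus B$ to each of the $k$ factors, where $A$ has quantum/homological shift $h^{-2n}q^{2n+1}$ and $B$ has shift $q^{-1}$. Distributing the tensor product over direct sums produces $2^k$ summands indexed by subsets $S \subseteq \{1,\dots,k\}$ recording which factors contribute $A$ versus $B$. Collecting together the summands with a fixed $|S| = i$ gives $\binom{k}{i}$ copies of a single isomorphism type, after using $P_n \otimes P_n \simeq P_n$ and the topological interpretation of the two pieces (the $A$-pieces produce projectors wrapped by belts, the $B$-pieces produce free belt circles around $P_n$).

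For the grading bookkeeping, a summand with $|S| = i$ carries total shift
\[
(h^{-2n}q^{2n+1})^{i} (h^{0}q^{-1})^{k-i} \;=\; h^{-2ni}\, q^{(2n+2)i - k},
\]
which is the shift appearing in the claim. The main obstacle is really only the combinatorial accounting of multiplicities (the $\binom{k}{i}$ factor) and ensuring that the sliding equivalence $P_n \otimes T_n \simeq T_n \otimes P_n$ is coherent across multiple belts; once that is confirmed, the rest reduces to iterating the one-belt lemma and collecting degree shifts as above.
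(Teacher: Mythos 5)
Your proof is correct and follows essentially the same route as the paper's: both establish $(P_n \otimes T_n)^{\otimes k} \simeq P_n \otimes T_n^{\otimes k}$ via the slide $P_n \otimes T_n \simeq T_n \otimes P_n$ together with idempotency of $P_n$, then distribute the one-belt splitting across the $k$ tensor factors and do the same degree bookkeeping. Your version just spells out the intermediate steps (inserting $P_n^{\otimes k}$ and the $2^k$-summand combinatorics) that the paper leaves implicit.
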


We now identify the $\symgroup{k}$ action on the right-hand side of the homotopy equivalence in Corollary \ref{cor:Pn-with-belt-split}. Let $V_i$ be a ${k \choose i}$-dimensional vector space at bigrading $(-2ni, (2n+2)i-k)$.
Let $[k]$ denote the set of indices $\{1, 2, \ldots, k\}$. 
The standard basis vectors in $V_i$ can be identified with the set of multi-indices $\{I \subset [k] \st |I| = i \}$. Let $V$ denote the vector space $\bigoplus_{i=0}^k V_i$.

The symmetric group $\symgroup{k}$ acts on each $V_i$ by permuting the elements of $[k]$. To be precise, if $\sigma \in \symgroup{k}$, and $I = \{ j_1, j_2, \ldots, j_i\}$, then $\sigma I = \{ \sigma(j_1), \sigma(j_2), \ldots, \sigma(j_i)\}$. 
Let $\mathbf{V}$ denote the $\symgroup{k}$ representation $\bigoplus_{i=0}^k V_i$. 

Let $\sigma \in \symgroup{k}$ and let $\Ext^{i,j}(P_{n})$ denote the group of bidegree $(i,j)$ endomorphisms of $P_{n}$ modulo chain homotopy. A priori, with some choice of basis for $V_i$, the action of $\sigma$ on $P_n \otimes V_i$ is given by a 
${k \choose i} \times {k \choose i}$ matrix $M_\sigma$ with entries in $\Ext^{0,0}(P_n)$. However, by \cite[Corollary 3.35(2)]{Hog-polyn-action} (and the universal coefficient theorem), $\Ext^{0,0}(P_n) \cong \BF$, so we may view $M_\sigma$ as a matrix with coefficients in $\BF$. 

Here we wish to show that by some choice of basis, $M_\sigma$ is precisely the matrix representing the action of $\sigma$ on $V_i$. 
To do this, we will rely on Grigsby-Licata-Wehrli's description of the $\symgroup{k}$ action on the canonical generators in the Lee homology of $\cl{T_n^{\otimes k}}$, so some setup is in order.

Each multi-index $I$ determines a sign sequence $\epsilon_I$  dictating an orientation on the $k$ belts in $T_n^{\otimes k}$. 
Let $o_I$ denote the orientation on $T_n^{\otimes k}$ where the vertical strands are all oriented upwards, and the belts are oriented according to $\epsilon_I$, where the belt at position $j \in [k]$ links negatively
with the vertical strands if and only if $j \in I$.

By naturality of the trace functor, taking an $(n,n)$-tangle $T$ to $\cl{T}$, we may instead consider the object 
$\cl{P_n \otimes T_n^{\otimes k}}$ in $\Kom(\BNcat)$. 
Since the $\symgroup{k}$ acts by cobordism maps, we have that 
\[
    \cl{P_n \otimes T_n^{\otimes k}}
    \simeq
    \cl{P_n} \otimes V.
\]

We will now take the trace and apply the Lee homology functor, which will allow us to pick out a set of homology classes; by keeping track of the action of $\symgroup{k}$, on this set, we will identify the representation $V$. 

Let $\FT_n \in \Kom(\TLcat_n)$ denote the positive full twist on $n$ strands.
Recall that $P_n = \colim_{m\to\infty} {\FT_n^{\otimes m}}$, where each $\FT_n^{\otimes m} \overset{\iota}{\hookrightarrow} \FT_n^{\otimes m+1}$ as a subcomplex \cite{Roz-Pn}.
After applying the Lee homology functor $\Lee: \Kom(\BNcat) \to \gVect$, 
by \cite{lee-endo} we have that $\Lee(\cl{\FT_n^{\otimes m} \otimes T_n^{\otimes k}})$ is generated by the Lee canonical classes $\{\fraks_o\}$, which are in bijection with the set of orientations on the link $\cl{\FT_n^{\otimes m} \otimes T_n^{\otimes k}}$.

By an abuse of notation, we let $o_I$ also denote the orientation on the $n+k$ components of $\cl{\FT_n^{\otimes m} \otimes T_n^{\otimes k}}$ where the vertical strands are all oriented upwards, and the $k$ belts are oriented according to $I$. 
Let $\fraks^m_I$ denote the Lee generator corresponding to $o_I$. 
Observe that under the maps induced by the inclusion maps of subcomplexes
\[
    \Lee(\cl{\FT_n^{\otimes m} \otimes T_n^{\otimes k}})
    \map{\iota_*} \Lee(\cl{\FT_n^{\otimes m+1} \otimes T_n^{\otimes k}}),
\]
the Lee class $\fraks_I^m$ is mapped to the Lee class $\fraks_I^{m+1}$. (This can be deduced by considering the oriented resolution of the two links, and verifying that the inclusion map $\iota$ identifies the Lee cycles $s^m_I$ and $s^{m+1}_I$ whose (nonzero) homology classes are $\fraks^m_I$ and $\fraks^{m+1}_I$, respectively.)

Hence we may define colimits
\[
    \fraks_I := \colim_{m \to \infty} \fraks^m_I,
\]
which are (nonzero) classes in 
\begin{equation}
\label{eq:Lee-of-projector}
    \Lee(\cl{P_n \otimes T_n^{\otimes k}}) 
    := \colim_{m \to \infty} \Lee(\cl{\FT_n^{\otimes m} \otimes T_n^{\otimes k}})
    \cong \Lee(\cl{P_n}) \otimes V.
\end{equation}

\begin{lemma}
\label{lem:Lee-projector}
The Lee homology of the trace of the projector $\Lee(\cl{P_n})$ is two dimensional, generated by the Lee generators corresponding to the braidlike and antibraidlike orientations. 
\begin{proof}
    The braid $\FT_n$ contains ${n(n-1)}$ crossings, i.e.\ two crossings between any two given strands. 

    Let $J \subset [n]$ be a multiindex of weight $j$. That is, in the corresponding orientation on $\FT_n$, there are $n_{\uparrow} = j$ strands pointing upward (braidlike), and $n_{\downarrow} = n-j$ strands pointing downward (antibraidlike). 

    To understand the relative homological grading of $\fraks_{o_J}$, we must understand the number of negative crossings in $(\FT_n, o_J)$. 

    Each $\uparrow$ strand links with other  $\uparrow$ strands positively, but links with each $\downarrow$ strand once, i.e.\ they cross at two crossings. Since we will also consider the contribution from the other strand, we will count this as one negative crossing. 

    Each $\downarrow$ strand links with other $\downarrow$ strands positively, but links with each $\uparrow$ strand once, i.e. at two crossings. The contribution to negative crossings is again one. 

    Therefore the total number of negative crossings in $(\FT_n, o_J)$ is 
    \[
        n_{\uparrow} n_{\downarrow} + n_{\downarrow} n_{\uparrow}
         =  2n_{\uparrow}n_{\downarrow}
         = 2j(n-j).
    \]
    The total number of negative crossings in $(\FT_n^{\otimes m}, o_J)$ is then $2mj(n-j)$. So, if $j \neq 0$ or $n$, then as the number of full twists increases ($m \to \infty$), the number of negative crossings grows without bound. On the other hand, the braidlike and antibraidlike resolutions remain at homological grading 0 and survive to the colimit.
\end{proof}
\end{lemma}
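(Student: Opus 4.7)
The plan is to exploit the colimit presentation $P_n = \colim_{m\to\infty} \FT_n^m$ together with the exactness of Lee homology (as a functor from $\Kom(\BNcat)$ to $\gVect$), which gives
\[
    \Lee(\cl{P_n}) \;\cong\; \colim_{m\to\infty} \Lee(\cl{\FT_n^m}).
\]
Since each $\cl{\FT_n^m}$ is an honest link in $S^3$, Lee's theorem provides a basis for $\Lee(\cl{\FT_n^m})$ indexed by orientations. Every orientation of $\cl{\FT_n^m}$ is obtained by uniformly orienting each of the $n$ strands, so it is determined by a subset $J \subset [n]$ of upward-pointing strands, giving generators $\fraks^m_{o_J}$.

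First, I would set up the grading count carried out in the excerpt: if $|J| = j$, then each ordered pair consisting of one up strand and one down strand contributes exactly two negative crossings per full twist, for a total of $2mj(n-j)$ negative crossings in $(\FT_n^m, o_J)$. By Lee's theorem, the homological grading of $\fraks^m_{o_J}$ (relative to the all-positive crossing convention used for $P_n$) is controlled by this count; when $j \in \{0, n\}$ the number of negative crossings is zero for every $m$, while when $0 < j < n$ the homological grading of $\fraks^m_{o_J}$ drifts off to $-\infty$ linearly in $m$.

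Next I would argue that the non-extremal classes die in the colimit. Recall the remark preceding the lemma, which identifies the inclusion map $\iota: \FT_n^m \hookrightarrow \FT_n^{m+1}$ on Lee canonical cycles: $s^m_{o_J}$ is sent to $s^{m+1}_{o_J}$. Since $P_n$ is homologically bounded above (Lemma \ref{lem: projector eats crossings}) and since the homological grading of $\fraks^m_{o_J}$ tends to $-\infty$ whenever $0 < j < n$, for each fixed homological degree $i$ the composition of sufficiently many inclusion maps pushes $\fraks^m_{o_J}$ into a region where the projector vanishes. Hence the image of $\fraks^m_{o_J}$ in the colimit is zero for all $0<j<n$. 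The two surviving classes $\fraks_\emptyset$ and $\fraks_{[n]}$ (the antibraidlike and braidlike generators) sit in homological degree $0$ independent of $m$ and remain nonzero in the colimit because they correspond to genuine Lee classes on the (orientation-reversal pair of) braidlike closures of $P_n$.

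The main obstacle is making the vanishing argument rigorous: I need to confirm that, in the colimit, a class whose homological grading escapes to $-\infty$ truly maps to zero rather than merely being shifted. This is where the homological boundedness of $P_n$ (Lemma \ref{lem: projector eats crossings}(2)) is essential — combined with the fact that $\Lee(\cl{P_n})$ inherits this boundedness, any Lee class whose representative in $\Lee(\cl{\FT_n^m})$ lies in a homological degree below this bound must become null in the colimit. Once the two extremal classes are shown to be independent and nonzero (e.g.\ by noting that the braidlike and antibraidlike closures of $P_n$ each contribute an unknot-like Lee generator, and that $\iota_\ast$ stabilizes these), we conclude $\dim \Lee(\cl{P_n}) = 2$, generated by $\fraks_\emptyset$ and $\fraks_{[n]}$.
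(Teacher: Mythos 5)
Your proposal mirrors the paper's approach (colimit presentation $P_n = \colim_m \FT_n^m$, exactness of Lee homology, the negative-crossing count $2mj(n-j)$), but the justification you offer for why the non-extremal classes vanish has a genuine gap. You assert simultaneously that $P_n$ is homologically bounded above and that the degree of $\fraks^m_{o_J}$ tends to $-\infty$, and then conclude the class escapes into a region where the projector vanishes. These two facts pull in opposite directions. In the conventions of this paper, the identity braid sits in degree $0$ and every other term of $\FT_n^m$ lies below it, so $P_n$ is supported in degrees $(-\infty,0]$; it is nonzero at arbitrarily negative degree, which is exactly the direction in which you compute the classes to drift. Boundedness above tells you nothing about behavior at $-\infty$, so the step as written does not go through.

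The mechanism that actually makes the argument work, and which the paper leaves implicit, lives on the Lee groups themselves rather than on where the projector vanishes. The inclusion $\iota: \FT_n^m \hookrightarrow \FT_n^{m+1}$ is a degree-preserving chain map, so $\iota_*(\fraks^m_{o_J})$ sits in the same homological degree $-2mj(n-j)$ as $\fraks^m_{o_J}$. But $\Lee(\cl{\FT_n^{m+1}})$ is supported only in the finite set of degrees $\{-2(m+1)j'(n-j') : 0 \leq j' \leq n\}$. For $0 < j < n$ and $m \geq j(n-j)$, the integer $mj(n-j)$ is not divisible by $m+1$ (since $\gcd(m,m+1)=1$ and $m+1 > j(n-j)$), so no $j'$ satisfies $(m+1)j'(n-j') = mj(n-j)$; hence $\iota_*(\fraks^m_{o_J}) = 0$. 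That, not boundedness of $P_n$, is what kills the non-extremal classes in the colimit. Your treatment of the two extremal classes (degree $0$, stable under $\iota_*$, independent) is fine.
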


\begin{proposition}
Under the chain homotopy equivalence in Corollary \ref{cor:Pn-with-belt-split}, the action of $\symgroup{k}$ on $P_n \otimes T_n^{\otimes k}$ agrees with the action of $\symgroup{k}$ on $P_n \otimes \mathbf{V}$, where the action of $P_n$ is trivial. 
\end{proposition}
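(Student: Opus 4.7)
The plan is to leverage all of the groundwork assembled in the preceding remark and in Lemma \ref{lem:Lee-projector}: the strategy reduces the claim to identifying scalar entries of matrices over $\BF$, and these scalars can be detected by a Lee-homology computation.

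First, I would fix the basis of $V_i$ geometrically. The splitting in Corollary \ref{cor:Pn-with-belt-split} arises, belt by belt, from the decomposition $P_n \otimes T_n \simeq P_n \otimes JM_n^+ \oplus P_n \otimes JM_n^-$. For a subset $I \subset [k]$ with $|I|=i$, let $e_I \in V_i$ denote the summand obtained by choosing $JM_n^-$ on the belts indexed by $I$ and $JM_n^+$ on the remaining belts. These $\{e_I\}_{|I|=i}$ furnish the standard basis for $V_i$ identified with $\BF\{I \subset [k] : |I|=i\}$, and by definition the $\symgroup{k}$-action to be verified takes $e_I \mapsto e_{\sigma I}$.

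Second, I would exploit the one-dimensionality of $\Ext^{0,0}(P_n) \cong \BF$ recorded in the preceding remark. The endomorphism $s_i$ (equivalently $\Sigma_i$) induces, on each $V_j$ summand, a matrix $M_{s_i}^{(j)}$ with entries in $\Ext^{0,0}(P_n) \cong \BF$. Thus it suffices to identify these scalar entries, and for this it is enough to apply any functor whose action on $\Ext^{0,0}(P_n)$ is nonzero and which resolves the direct-sum decomposition into its summands.

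Third, I would pass to traces and apply the Lee homology functor, following the setup built in the remark around \eqref{eq:Lee-of-projector}. By Lemma \ref{lem:Lee-projector} together with \eqref{eq:Lee-of-projector}, $\Lee(\cl{P_n \otimes T_n^{\otimes k}}) \cong \Lee(\cl{P_n}) \otimes \mathbf{V}$ has a basis consisting of the Lee canonical classes $\fraks_I$, one per multi-index $I \subset [k]$ (times the two canonical classes generating $\Lee(\cl{P_n})$). The crucial geometric point is that the cobordism realizing $s_i$ (or any $\sigma \in \symgroup{k}$) literally permutes the belts, and hence permutes the orientations that label the Lee generators. By the naturality of Lee canonical classes under orientation-preserving cobordisms induced from belt permutations, the induced map on Lee homology satisfies $\sigma_* \fraks_I = \fraks_{\sigma I}$.

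Finally, because the entries of $M_\sigma^{(j)}$ live in the one-dimensional space $\Ext^{0,0}(P_n)$, and the basis $\{\fraks_I\}$ refines the $V_j$ decomposition (the class $\fraks_I$ being supported in the summand $e_I$), the relation $\sigma_* \fraks_I = \fraks_{\sigma I}$ pins down $M_\sigma^{(j)}$ as the permutation matrix representing the $\symgroup{k}$-action on $V_j$. This is precisely the claim. The main obstacle I anticipate is the bookkeeping for the third step: rigorously matching the summand labels $e_I$ with the supports of the Lee classes $\fraks_I$ inside the colimit definition of $\Lee(\cl{P_n \otimes T_n^{\otimes k}})$, and verifying that belt-permutation cobordisms act on Lee generators by the naive orientation permutation (in particular, without introducing signs from reversing orientations on vertical strands, which do not occur here because only belts are being permuted).
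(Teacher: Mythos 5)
Your proposal takes essentially the same route as the paper: pass to Lee homology of traces, invoke Grigsby--Licata--Wehrli to see that $\sigma$ permutes the canonical Lee classes $\fraks_I$, and transport this back through the chain homotopy equivalence. However, there are two places where your argument glosses over or deviates from what the paper actually establishes.

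First, you assert --- citing Lemma \ref{lem:Lee-projector} and \eqref{eq:Lee-of-projector} --- that the collection $\{\fraks_I : I \subset [k]\}$ (tensored with the two Lee generators of $\Lee(\cl{P_n})$) \emph{forms a basis} of $\Lee(\cl{P_n \otimes T_n^{\otimes k}})$. This is not automatic from \eqref{eq:Lee-of-projector}, which only defines the object as a colimit; it requires an argument that the nonzero classes $\fraks_I$ remain linearly independent after passing to the colimit over $m$. The paper handles this via a finite-level argument: a hypothetical nonzero relation among the $\fraks_I$ would already hold among the $\fraks_I^m$ for all $m \geq M$ at some finite level $M$, contradicting the fact that the $\{\fraks_I^m\}$ form a subset of a basis of $\Lee(\cl{\FT_n^m \otimes T_n^k})$. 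Your proof needs this step.

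Second, you aim to establish a literally finer claim than needed: that each $\fraks_I$ maps, under the equivalence $\varphi$ of Corollary \ref{cor:Pn-with-belt-split}, precisely into the summand $e_I$, so that the matrix $M_\sigma$ is \emph{literally} the permutation matrix in that basis. You correctly flag this as a potential obstacle, and indeed it is not established and is not clearly true --- $\varphi$ could a priori mix summands $e_I$ and $e_{I'}$ with $|I|=|I'|$. The paper sidesteps this: once one knows that $\{\fraks_I : |I|=i\}$ is \emph{some} basis of the graded piece at homological degree $-2ni$ and that $\sigma$ permutes it via $I \mapsto \sigma I$, the $\symgroup{k}$-representation on $V_i$ is isomorphic to the permutation representation, which is all that ``agrees with'' means in the statement. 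Replacing your final step with this weaker, basis-free conclusion repairs the proof and also avoids the bookkeeping you were worried about; the observation $\Ext^{0,0}(P_n) \cong \BF$ is then background context rather than a load-bearing step.
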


\begin{proof}

For each fixed $m$, 
Grigsby-Licata-Wehrli show 
that the action of $\sigma \in \symgroup{k}$ sends $\fraks^m_{I} \mapsto \fraks^m_{\sigma I}$ (see \cite[Section 7]{GLW-schur-weyl}). 
By functoriality of Lee homology, the $\symgroup{k}$-action on  
$\Lee(\cl{FT^{\otimes m}_n \otimes T_n^{\otimes k}})$ 
is compatible with the action on 
$\Lee(\cl{FT^{\otimes m+1}_n \otimes T_n^{\otimes k}})$.
Thus, in the colimit, the action of $\sigma$ takes $\fraks_I \mapsto \fraks_{\sigma I}$. 

It remains to verify that for each $i$, the set $\{\fraks_I \st |I| = i\}$ forms a basis for the ${k \choose i}$-dimensional vector space at homological grading $-2ni$ in $\Lee(\cl{P_n \otimes T_n^{\otimes k}})$. 
Note that the homological grading is preserved as we pass from $P_n \otimes T_n^{\otimes k} \in \Kom(\TLcat_n)$ to $\cl{P_n \otimes T_n^{\otimes k}} \in \Kom(\BNcat)$ and further to $\Lee(\cl{P_n \otimes T_n^{\otimes k}}) \in \gVect$ (with $\KhR_2$ conventions). 

Since there are ${k \choose i}$ elements in the set, it suffices to show that they are all linearly independent. 
Indeed, since there are only finitely many of these classes, if there were some nonzero linear combination of the $\{\fraks_I \st |I| = i\}$, there would be some finite level $M$ where for all $m \geq M$, the same relation would hold among $\{\fraks^m_I \st |I| = i\}$; this is impossible because the set of all $\{\fraks_I \st I \subset [k] \}$ forms a subset of a basis for $\Lee(\cl{FT^{\otimes m+1}_n \otimes T_n^{\otimes k}})$. 

To summarize, we have shown that the action of $\symgroup{k}$ is standard on the subspace of $\Lee(P_n \otimes T_n^{\otimes k})$ corresponding to orientations of $P_n \otimes T_n^{\otimes k}$ where the vertical strands are oriented upward. 
The same holds for the set of orientations where the vertical strands are anti-braidlike; let $\bar \fraks_I$ denote the Lee generator corresponding to the orientation $\bar o_I$, where the orientation of \textit{all} $n+k$ strands are reversed from their orientation in $o_I$.

Finally, let $\varphi$ denote the chain homotopy equivalence realizing \eqref{eq:Lee-of-projector}. 
Then the images  of $\{\fraks_{I}\}_{I \subset [k]} \cup \{ \bar \fraks_I\}_{I \subset [k]}$ under $\varphi$ form a basis for $\Lee(P_n) \otimes V$ that realizes that the action of $\sigma \in \symgroup{k}$ as the standard permutation matrix on the $2^k$ subsets of $[k]$. 
Therefore $V \cong \mathbf{V}$ as $\symgroup{k}$ representations. 
\end{proof}

In other words, the $2^k$ $P_n$ components in Corollary \ref{cor:Pn-with-belt-split} correspond to the $2^k$ subsets of $[k]$, and the $\symgroup{k}$ action is the one induced by the natural action of $\symgroup{k}$ on $[k]$. This action has $k+1$ orbits, indexed by the subset size $0 \leq i \leq k$. Therefore 
\begin{equation}\label{eq:17}
    \symmetrized{P_n\otimes T_n^{\otimes k}}
    \simeq 
    \bigoplus_{i=0}^k h^{-2ni} q^{(2n+2)i-k} P_n.
\end{equation}

Finally, we add orientations to the computation of the unoriented bracket (which agrees with the $\KhR$ bracket if all crossings are positive). Let $T_{n}^{+}$ (resp. $T_{n}^{-}$) denote the $n$-strand identity braid with a single counterclockwise (resp. clockwise) oriented belt.

\subsection{Dual Projector with Kirby-colored belt}

For our computations, we will actually need the dual projector.
Recall that diagrams in $\TLcat_n$ are drawn in the unit square $[0,1]^2$ in the $xy$-plane, with $n$ endpoints each on the intervals $[0,1] \times \{0\}$ and $[0,1] \times \{1\}$. 
The dualizing functor 
$(\cdot)\dual:
\Kom^-(\TLcat_n) \to \Kom^+(\TLcat_n)$ reflects the diagrams across the line $y = 1/2$, reverses both homological and quantum degree, and is contravariant on morphisms (see more discussion following Theorem 4.12 in \cite{Hog-spin}). 
Observe, for example, that the complexes for a positive and a negative crossing are dual to each other.

\begin{theorem}[\cite{Hog-spin}, Theorem 4.12]
There is a natural isomorphism 
\[
\Hom^\bullet_{\Kom^-(\TLcat_n)}(A,B) \cong 
q^n \Hom^\bullet_{\Kom^\Pi(\TLcat_0)} 
\left (
    \emptyset, \cl{B A\dual} 
    \right )
\]
\end{theorem}

Since $\cl{P_n \otimes P_n\dual} \cong \cl{P_n\dual \otimes P_n}$ (in $\Kom^\Pi(\TLcat_0)$), we immediately deduce the following:

\begin{corollary}
There is a (grading-preserving) isomorphism
    \[ 
        \Ext(P_n, P_n) \cong \Ext(P_n\dual, P_n\dual).
    \]
\end{corollary}

The dual projector $P_n\dual$ satisfies the same properties as $P_n$ (cf.\ Lemma \ref{lem: projector eats crossings}): it is idempotent, eats crossings, and kills turnbacks, and the identity braid appears only at homological degree 0. Of course, the dual projector is bounded below rather than above. 
This difference will become important later, because it guarantees that any endomorphism of $P_n\dual$ of negative homological degree is nilpotent. 

\begin{lemma}
\label{lem:Pn-Tn2-oriented}
Regardless of how the identity braid $\one_n$ is oriented, $\one_n \otimes T_n^+ \otimes T_n^- = T_n^+ \otimes T_n^-$ must have an equal number of positive and negative crossings. Since there are $4n$ total crossings, $n_- = n_+ = 2n$. Thus
\begin{align*}
    P_n \otimes {T_n^+ \otimes T_n^-}
    &= h^{n_-}q^{-n_-} {P_n \otimes T_n \otimes T_n}\\
    &\simeq 
    h^{-2n} q^{2n+2} P_n
    \oplus 2(h^0q^0 P_n)
    \oplus h^{2n}q^{-2n-2} P_n.
\end{align*}
\end{lemma}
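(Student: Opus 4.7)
The plan is to reduce the lemma to Corollary \ref{cor:Pn-with-belt-split} (equivalently Equation (\ref{eq:17})) by combining two ingredients: a crossing count for $T_n^+ \otimes T_n^-$, and the global grading shift $\CKhR_2(D) = \KhRbrack{D}[n_-]\{-n_-\}$ that converts the unoriented bracket used in Corollary \ref{cor:Pn-with-belt-split} to the oriented $\KhR_2$ invariant.

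First I would verify that $n_+ = n_- = 2n$ in $T_n^+ \otimes T_n^-$, independently of the orientation chosen on $\one_n$. Each belt meets each vertical strand in two crossings in the planar diagram, so the total crossing count is $4n$. For a single belt encircling a single strand, the two crossings share a common sign, because the belt and strand form (topologically) a Hopf link; the sign of that common value flips when one reverses the orientation of either component. Now fix any orientation of $\one_n$ and any vertical strand $s$. Since the two belts in $T_n^+ \otimes T_n^-$ carry opposite orientations, one contributes two positive and the other two negative crossings with $s$. Summing over all $n$ strands gives $n_+ = n_- = 2n$ regardless of how the vertical strands are oriented.

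Next, the global shift $\CKhR_2(D) = \KhRbrack{D}[n_-]\{-n_-\}$, applied with $n_- = 2n$, gives
\[
    P_n \otimes T_n^+ \otimes T_n^- \;=\; h^{2n}q^{-2n}\, \bigl(P_n \otimes T_n \otimes T_n\bigr),
\]
where the right-hand product is computed using the unoriented bracket. Finally, I would invoke Corollary \ref{cor:Pn-with-belt-split} (the pre-symmetrization decomposition) with $k=2$ to obtain
\[
    P_n \otimes T_n^{\otimes 2} \;\simeq\; h^0 q^{-2} P_n \;\oplus\; 2\, h^{-2n} q^{2n} P_n \;\oplus\; h^{-4n} q^{4n+2} P_n,
\]
and distribute the shift $h^{2n}q^{-2n}$ through each summand to land on the stated direct sum.

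I do not anticipate any serious obstacle: the whole argument is bookkeeping around grading conventions, built on top of the already-established decomposition of $P_n \otimes T_n^{\otimes k}$. The one spot that merits care is the sign analysis in the first step, where one must check that the cancellation between the two belts works for $n_+$ and $n_-$ \emph{simultaneously} and for \emph{every} choice of orientation of $\one_n$; this is precisely the content of the Hopf-link observation together with the fact that $T_n^+$ and $T_n^-$ carry opposite belt orientations.
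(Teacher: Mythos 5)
Your proposal is correct and follows essentially the same reasoning the paper encodes directly in the lemma statement (no separate proof is given there). The one place you add genuine content is the justification of $n_+ = n_- = 2n$ via the Hopf-link observation that the two crossings between a belt and a strand share a sign, which the paper asserts without argument; the rest is the expected bookkeeping with $\CKhR_2(D) = \KhRbrack{D}[n_-]\{-n_-\}$ and Corollary~\ref{cor:Pn-with-belt-split} at $k=2$, and your grading arithmetic checks out.
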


We now describe the dotted annulus map on $\symmetrized{P_{n}\dual \otimes{T_{n}^{\otimes{k}}}}$ with orientations. 
Abusing notation, denote the morphisms in $\Kom(\TLcat_{n})$ induced by the ribbon cobordisms  which wrap (resp.\ unwrap) two antiparallel rings around $\one_n$ by 
 
\[
     \acup, \acupdot: \cl{\one_n} \to \cl{T_n^+ \otimes T_n^-}
    \qquad \text{and} \qquad
    \acap, \acapdot: \cl{T_n^+ \otimes T_n^-} \to \cl{\one_n}.
\]
Here, we assume $\one_n$ is given some orientation. 

We will need to understand the symmetrized map of bidegree $(0,2)$
\begin{equation}
\label{eq:Pn-to-3Pn}
    \symmetrized{\idmap_{P_n\dual} \otimes \acupdot}:
    P_n\dual 
    \to 
    h^{-2n} q^{2n+2} P_n\dual
    \oplus P_n\dual
    \oplus h^{2n}q^{-2n-2} P_n\dual
\end{equation}
that appear in the cabling system defining $P_n\dual \otimes T_n^{\omega_\alpha}$. 
To do this, we will rely on Hogancamp's computations for morphisms between categorified projectors, specifically the $\Ext$ groups listed below:

\begin{lemma}\label{lem:hogextgroups}\cite[Corollary 3.35]{Hog-polyn-action}
    The group $\Ext^{i,j}(P_{n}\dual,P_{n}\dual)$ of maps $h^iq^jP_n\dual \to P_n\dual$ mod homotopy satisfies the following.
    \begin{enumerate}
        \item[(1)] If $k<0$, then $\Ext^{k-i,i}(P_{n}\dual,P_{n}\dual)=0$ for all $i$.\\
        \item[(2)] $\Ext^{0-i,i}(P_{n}\dual,P_{n}\dual)\cong\mathbb{F}$ when $i=0$ and zero otherwise.\\
        \item[(4)] If $i\in{\{2,4,...,2n\}}$, then $\Ext^{2-i,i}(P_{n}\dual,P_{n}\dual)\cong{\mathbb{F}}$ and zero otherwise.
    \end{enumerate}
\end{lemma}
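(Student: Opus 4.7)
The plan is to compute $\Ext^{i,j}(P_n)=H^{i,j}(\Hom_{\Kom(\TLcat_n)}(P_n,P_n))$ by exploiting the defining features of the categorified Jones-Wenzl projector collected in Lemma \ref{lem: projector eats crossings}. The structural inputs I will need are: $P_n$ is idempotent, it kills turnbacks, it is homologically bounded above, and its homological degree $0$ piece contains exactly one copy of the identity braid $\one_n$ sitting in quantum degree $0$. By idempotency one has $\Hom^{*,*}(P_n,P_n)\simeq \Hom^{*,*}(\one_n,P_n)$, and by the turnback-killing property every morphism must be supported on the through-degree-$n$ summands of $P_n$, i.e.\ on the shifted copies of $\one_n$ appearing in the chain groups.

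The first concrete step is to fix an explicit model for $P_n$ — either Cooper-Krushkal's minimal chain complex or the colimit presentation $P_n\simeq \colim_{m\to\infty}\FT_n^m$ with its canonical subcomplex inclusions. With such a model in hand, I would enumerate in each homological degree the summands that become shifts of $\one_n$ after Bar-Natan delooping and then compute $\Hom^{i,j}(\one_n,P_n)$ as the cohomology of the resulting bigraded chain complex, whose differential is read off from $d_{P_n}$ and the local relations of Figure \ref{fig:BNlocalrels}.

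Statement (1), the vanishing of $\Ext^{k-i,i}(P_n)$ for $k<0$, should follow from degree bookkeeping in the minimal model: each step backwards in homological degree is accompanied by a strictly positive increment in internal quantum degree (this is essentially the convergence input that makes $P_n$ well-defined as a colimit), so no closed morphism of total bidegree $(k,\ast)$ with $k<0$ can exist. Statement (2) is then immediate: the only candidate contributing to $\Ext^{-i,i}(P_n)$ at $i=0$ is the identity morphism $\idmap_{P_n}$, coming from the unique copy of $\one_n$ in bidegree $(0,0)$, and higher quantum-degree summands at homological degree $0$ simply do not occur in $P_n$.

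The main obstacle is statement (3), which amounts to constructing and enumerating the polynomial generators of the endomorphism algebra of $P_n$. For each even $i\in\{2,4,\ldots,2n\}$ I would exhibit a nontrivial closed morphism of bidegree $(2-i,i)$ by writing down an explicit dotted cobordism from $\one_n$ to a specific summand of $P_n$ at homological degree $2-i$, intuitively using $i/2$ dots on carefully chosen components. The delicate point is verifying non-nullity of each such class; I would do this by pairing with the trace and evaluating on a canonical Lee class $\fraks_I$ in the closure, using that dot multiplication on Lee generators is explicit and well understood, or alternatively by identifying these classes with the $n$ known polynomial generators of Hogancamp's action on $P_n$. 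The upper bound on rank, and the bound $i\leq 2n$, follow from the Frobenius-algebra relation (one dot per connected component of a flat resolution, with two dots on the same component vanishing) combined with the degree bookkeeping from (1), which restricts each bigraded chain group of $\Hom^{*,*}(\one_n,P_n)$ to finite rank.
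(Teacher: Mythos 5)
The paper does not prove Lemma~\ref{lem:hogextgroups}: it is a verbatim restatement of \cite[Corollary~3.35]{Hog-polyn-action}, cited for later use, so there is no in-paper argument to compare against. Your sketch is therefore an attempt to reprove Hogancamp's theorem from first principles, which is a much bigger undertaking than what the paper asks of this lemma.

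As a sketch, the opening reduction $\Ext^{*,*}(P_n)\cong \Hom^{*,*}(\one_n,P_n)$ via idempotency is correct and is indeed the right starting point. But there are two substantive gaps. First, the claim that ``every morphism must be supported on the through-degree-$n$ summands of $P_n$, i.e.\ on the shifted copies of $\one_n$'' is not a literal statement about the cochain complex $\Hom^{*,*}(\one_n,P_n)$: for a Temperley--Lieb diagram $D$ with turnbacks, $\Hom_{\Cob_n}(\one_n,D)$ is generally nonzero (it contains saddle cobordisms), so the Hom complex receives contributions from \emph{every} summand of $P_n$, not just the identity braid copies. Reducing to the identity summands requires passing to a carefully chosen minimal model and arguing at the level of homology, not supports; this is exactly the content of the hard work in \cite{Hog-polyn-action}. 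Second, your statement~(3) is where the theorem actually lives, and the proposal leaves both the construction of the generators $U_k^{(n)}$ and the proof that they exhaust $\Ext^{2-i,i}(P_n)$ to a hand-wave. Verifying non-nullity by pairing with Lee classes is a plausible idea but does not obviously give the upper bound on rank, and the constraint $i\le 2n$ does not follow merely from ``one dot per component with $\bullet\bullet=0$'' once one accounts for the full complex of $P_n$ rather than a single flat tangle. In short, for the purposes of this paper you should simply cite Hogancamp as the authors do; if you intend to reprove the result, the sketch needs to be replaced with an argument that actually controls the full Hom complex, not just its identity summands.
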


\begin{proposition}
    \label{prop:projectorkirbyprop}
    For $n>0$, The homotopy colimit $P_n\dual\otimes{T_{n}^{\Omega_\alpha}}$ for $\alpha\in{\{0,1\}}$ is contractible.

\begin{proof}

Let $\alpha\in{\{0,1\}}$ and consider the directed system
    \[  \symmetrized{P_n\dual\otimes{T_{n}^{\pm{\alpha}}}}\rightarrow{\symmetrized{P_n\dual\otimes{T_{n}^{\pm{\alpha}}}\otimes(T_{n}^{+}\otimes{T_{n}^{-})}}}\rightarrow{\symmetrized{P_n\dual\otimes{T_{n}^{\pm{\alpha}}}\otimes(T_{n}^{+\otimes{2}}\otimes{T_{n}^{-\otimes{2}})}}}\rightarrow{\cdots}
    \]
denoted $P_n\dual\otimes{\mathcal{A}_{n}^{\alpha}}$, where the arrows are given by $\symmetrized{\idmap_{P_n\dual}\otimes{\acupdot}}$.
By \eqref{eq:17}, each object 
each object in the directed system $P_n\dual\otimes{\mathcal{A}_{n}^{\alpha}}$ is a sum of shifted projectors,

Let us study the components of the (symmetrized) dotted annulus map in \eqref{eq:Pn-to-3Pn} as shown in the diagram below:
\begin{equation}
\label{eq:Pn-to-3Pn-diagram}
    \begin{tikzcd}
    & P_n\dual 
    \arrow[swap]{dl}{\phi_1} 
    \arrow{d}{\phi_2}
    \arrow{dr}{\phi_3}
    & \\
    h^{-2n} q^{2n+2} P_n\dual
        & P_n\dual
        &h^{2n}q^{-2n-2}P_n\dual.
    \end{tikzcd}
\end{equation}

\begin{enumerate}
\item[($\phi_1$)] The bidegree $(0,2)$ map
$\phi_1: P_n \to h^{-2n} q^{2n+2} P_n$
corresponds to a degree-preserving map
\[
    h^{2n}q^{(-2n-2)+2} P_n \to P_n,
\]
up to homotopy,
i.e.\ an element of $\Ext^{2n, -2n}(P_n, P_n)$.
This falls into Case (2) of Lemma \ref{lem:hogextgroups}, with $k=0$ and $i = -2n \neq 0$. 
Therefore $\phi_1 \simeq 0$. 
\item[($\phi_2$)] By Lemma \ref{lem:hogextgroups}, the space of endomorphisms of $P_n\dual$ in bidegree $(0,2)$, up to homotopy, is isomorphic to $\mathbb{F}$. 
In the same paper (\cite{Hog-polyn-action} Theorem 1.13), Hogancamp shows that $\Ext^{0,2}(P_n, P_n)$ is generated by the dotted identity map $U_1^{(n)}$, depicted below:
\begin{center}
\includestandalone[scale=0.85]{tikz-imgs/JWPk-dotmap}
\end{center}
This is also true for $P_n\dual$ (the dual morphism to dotted identity is still a dotted identity, but on the dual object). 
Thus $\phi_2 \simeq c U_1^{(n)}$ for some $c \in \mathbb{F}$. Since $U_1^{(n)}$ is nilpotent of order 2, the nilpotency order of $\phi_2$ is at most 2. 
\item[($\phi_3$)] Because $P_n\dual$ is bounded below, the map $\phi_3$ is nilpotent on any class in $P_n\dual$. In other words, fixing a class $x$ at homological degree $\gr_h(x)$ in the complex $P_n\dual$, there exists some $N(x) \in \N$ such that for all $k > N(x)$, the map
\[
    \phi_3^k: P_n\dual \to h^{2nk} q^{(-2n-2)k}P_n\dual
\]
sends $x \to 0$, simply because $h^{2nk} q^{(-2n-2)k}P_n\dual$ has chain group $0$ at homological degree $\gr_h(x)$.
\end{enumerate}

Let $x$ denote a class in the directed system of shifted projectors. 
Any sufficiently long path $\rho$ from $x$ through the directed system will satisfy at least one of the following:
\begin{itemize}
    \item $\rho$ contains an instance of $\phi_1$
    \item $\rho$ contains an instance of $(\phi_2)^2$
    \item $\rho$ contains $>N(x)$ instances of $\phi_3$.
\end{itemize}
By the discussion above, we thus have $x \sim 0$ in the colimit.

 Thus, $\colim{(P_n\dual\otimes{\mathcal{A}_{n}^{\alpha}})}=P_n\dual\otimes{T_{n}^{\omega_{\alpha}}}=0$. Finally, by Proposition \ref{prop:contractiblehocolim}, we have that $P_n\dual\otimes{T_{n}^{\Omega_{\alpha}}}\simeq{0}$ as desired.
\end{proof}
\end{proposition}

\subsection{Homological levels with at least one odd term}

In this section, we prove that the complex $T_{n}^{\Omega_{\alpha}}$ is $0$ when $n$ is odd. To do so, we use the resolution of identity (\ref{eq:resofidentity}) and prove that $P^{\vee}_{n,k}\otimes{T_{n}^{\Omega_{\alpha}}}\simeq{0}$ for $k\leq{n}$ and odd. We begin by proving the following commuting property for $T_{n}^{\Omega_{\alpha}}$.

\begin{lemma}
    \label{lem:kirbyTLgenerator}
    Let $\tau_{i}$ denote the Temperley-Lieb generator on the strands at positions $i$ and $i+1$. Then $\tau_{i}\otimes{T_{n}^{\Omega_{\alpha}}}\simeq{T_{n}^{\Omega_{\alpha}}}\otimes{\tau_{i}}$, for $\alpha\in{\{0,1\}}$ (see Figure \ref{fig:swap-Kirby-TL}).
\end{lemma}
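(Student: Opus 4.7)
\emph{Plan.} The plan is to reduce the claim to a levelwise statement on the directed systems defining $T_n^{\Omega_\alpha}$, and then pass to the homotopy colimit via Lemma~\ref{lem:forSwapsandSlides}. Since tensoring with $\tau_i$ on either side is an endofunctor of $\Kom(\TLcat_n)$ that preserves homotopy colimits, we can write
\[
    \tau_i \otimes T_n^{\Omega_\alpha} \simeq \hocolim_k \big(\tau_i \otimes \symmetrized{T_n^{\otimes \alpha + 2k}}\big)
    \quad\text{and}\quad
    T_n^{\Omega_\alpha} \otimes \tau_i \simeq \hocolim_k \big(\symmetrized{T_n^{\otimes \alpha + 2k}} \otimes \tau_i\big),
\]
so it suffices to exhibit a morphism of directed systems consisting of levelwise chain homotopy equivalences.

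\emph{Levelwise construction.} For each $k$, we plan to construct a chain homotopy equivalence
\[
    \gamma_k : \tau_i \otimes \symmetrized{T_n^{\otimes \alpha+2k}} \longrightarrow \symmetrized{T_n^{\otimes \alpha+2k}} \otimes \tau_i
\]
as the symmetrization of a cobordism in $\Mor(\Cob_n)$ that slides the $\alpha+2k$ belts past $\tau_i$. Because the two tangles are not ambient-isotopic rel boundary (the belts link distinct topological features of $\tau_i$ on either side), this cobordism is a sequence of saddle moves localized near the cup/cap region of $\tau_i$; its realization as a chain homotopy equivalence is to be checked using Bar-Natan's local relations (delooping, neck cutting, and the Reidemeister equivalences), with the reverse slide providing the homotopy inverse. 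The underlying cobordism is $\symgroup{\alpha+2k}$-equivariant because the slide treats all belts symmetrically, so it descends to the symmetrized complexes. One then verifies that these maps commute with the symmetrized dotted annulus maps up to homotopy,
\[
    \gamma_{k+1} \circ \big(\idmap_{\tau_i} \otimes \symmetrized{\acupdot}\big)
    \sim
    \big(\symmetrized{\acupdot} \otimes \idmap_{\tau_i}\big) \circ \gamma_k,
\]
which follows because $\acupdot$ introduces two new belts disjoint from $\tau_i$ that participate uniformly in the slide.

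\emph{Conclusion and main obstacle.} By Lemma~\ref{lem:forSwapsandSlides}(a), the family $\{\gamma_k\}$ induces a chain map $\gamma : \tau_i \otimes T_n^{\Omega_\alpha} \to T_n^{\Omega_\alpha} \otimes \tau_i$, and by Lemma~\ref{lem:forSwapsandSlides}(c), since each $\gamma_k$ is a homotopy equivalence, $\cone(\gamma) \simeq 0$ and $\gamma$ itself is a chain homotopy equivalence. The main technical hurdle is the construction and invertibility of $\gamma_k$ at each finite level: the two sides being non-isotopic forces the equivalence to come from a nontrivial cobordism whose compositions with the reverse slide must be reduced to identity by Bar-Natan relations, and additional care is required to ensure the construction is compatible with the homotopy idempotent $e_{\alpha+2k}$. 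One alternative to keep in mind is to combine the resolution of the identity \eqref{eq:resofidentity} with Proposition~\ref{prop:projectorkirbyprop} to break both sides into twisted complexes involving $P_{n,k} \otimes T_n^{\Omega_\alpha}$, reducing the commutation check to lower-through-degree pieces where the slide cobordism is more transparent.
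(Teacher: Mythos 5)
Your overall strategy — construct levelwise cobordism maps, check they commute (up to homotopy) with the symmetrizing and dotted annulus maps, then pass to the homotopy colimit via Lemma~\ref{lem:forSwapsandSlides}(a) and (c) — is exactly the route the paper takes. But there is a concrete error in your characterization of the levelwise map that, if pursued literally, would break the argument.

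You assert that $\tau_i \otimes T_n^{\otimes k}$ and $T_n^{\otimes k} \otimes \tau_i$ are \emph{not} ambient isotopic rel boundary and that the cobordism therefore needs saddle moves. This is false: the two tangles \emph{are} ambient isotopic rel boundary. The belt links each of the $n-2$ through-strands once, but it links the cap arc of $\tau_i$ with total linking number $0$ (the cap enters and exits the belt's disk), so the belt can be slid up over the cap's turnback and then slid onto the cup arc, again without any topology change. This is precisely why the paper is able to realize $\mathcal{R}_{i,k}$ as a composition of Reidemeister II moves, which are ambient isotopies. Your alternative would actually fail: a saddle cobordism changes Euler characteristic by $-1$, so any cobordism built from a nonzero number of saddles is not quantum-degree $0$ (cf.\ \eqref{eq:quantum-degree-of-cobordism}) and hence cannot induce a chain homotopy equivalence in $\Kom(\TLcat_n)$, where morphisms are required to be degree-preserving. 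So the ``main technical hurdle'' you flag is not in fact a hurdle: once you recognize the isotopy, each $\gamma_k$ is a Reidemeister equivalence, automatically invertible up to homotopy, and its equivariance and compatibility with $\symmetrized{\acupdot}$ follow from functoriality exactly as you say. The alternative route you sketch in the last sentence (via the resolution of the identity and Proposition~\ref{prop:projectorkirbyprop}) should be used with care: that machinery is assembled precisely by building on the present lemma via Corollary~\ref{cor:KirbyBSwap}, so one would need to re-derive the relevant commutation independently.
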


\begin{figure}[!h]
    \centering
    \includegraphics[width=.7\linewidth]{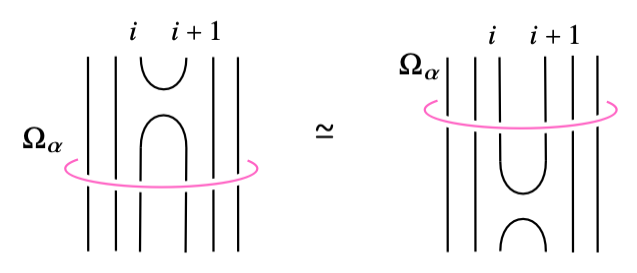}
    \caption{Commuting rule for a Kirby-colored belt and a $\TLcat_{n}$-generator.}
    \label{fig:swap-Kirby-TL}
\end{figure}

\begin{proof}
To obtain the equivalence in Figure \ref{fig:swap-Kirby-TL}, we produce a chain homotopy equivalence map $\Phi:\tau_{i}\otimes{T^{\Omega_{\alpha}}_{n}}\rightarrow{T^{\Omega_{\alpha}}\otimes{\tau_{i}}}$. Note that, for some fixed number of belts, we have a chain homotopy equivalence given by a composition of Reidemeister II moves. The following figure is an example for $\tau_{i}\otimes{T_{n}^{\otimes{2}}}\simeq{T_{n}^{\otimes{2}}\otimes{\tau_{i}}}$.

\begin{center}
    \includegraphics[width=.6\linewidth]{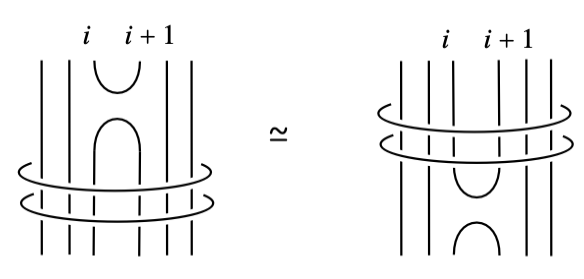}
\end{center}

Let $\mathcal{R}_{i,k}:\tau_{i}\otimes{T_{n}^{\otimes{k}}}\rightarrow{T_{n}^{\otimes{k}}\otimes{\tau_{i}}}$ denote the chain maps associated to the composition of cobordisms that slide the $k$ belts through the diagram $\tau_{i}$. Note that, by functoriality, the maps $\mathcal{R}_{i,k}$ commute with our dotted ribbon maps up to homotopy, and commute as well with the cobordisms associated to the homotopy $\mathfrak{S}_{k}$-action on belts. Hence, we have a homotopy equivalence: 

\begin{center}
    \includegraphics[width=.6\linewidth]{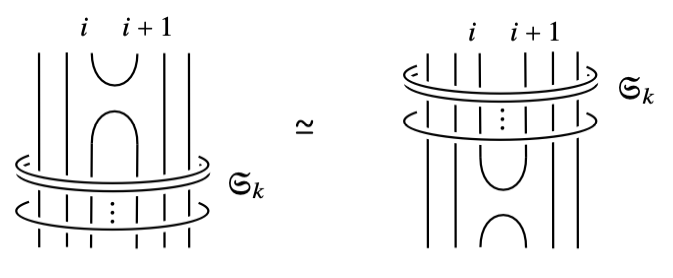}
\end{center}

Let $\alpha_{i,k}:\tau_{i}\otimes{\symmetrized{T_{n}^{\otimes{k}}}}\rightarrow{\symmetrized{T_{n}^{\otimes{k}}}\otimes{\tau_{i}}}$ be the chain map that induces the homotopy equivalence above. Then there is an induced comparison chain map $\alpha:\tau_{i}\otimes{T^{\Omega_{j}}_{n}}\rightarrow{T^{\Omega_{j}}_{n}\otimes{\tau_{i}}}$ that gives a homotopy equivalence of homotopy colimits by Lemma \ref{lem:forSwapsandSlides}(a) and \ref{lem:forSwapsandSlides}(c). 

\end{proof}

\begin{remark}
    The arguments in the proof of Lemma \ref{lem:kirbyTLgenerator} also hold for Temperley-Lieb diagrams with different numbers of endpoints. In particular, if $\tau$ is a chain complex associated to a planar diagram with no crossings in $\Cob_{n,k}$, then by an argument identical to the above, we obtain
    \[
    \tau\otimes{T_{n}^{\Omega_{\alpha}}} \simeq{T_{k}^{\Omega_{\alpha}}} \otimes{\tau}.
    \]
\end{remark}

We will require the following property of a Kirby-colored belt with a cone stacked on top. 

\begin{proposition}\label{prop:kirbycone}
Let $f:A\rightarrow{B}$ be a chain map in $\Kom(\TLcat_{n})$, then there is a well-defined map $f\otimes{\idmap}:A\otimes{T_{n}^{\Omega_{\alpha}}}\rightarrow{B\otimes{T_{n}^{\Omega_{\alpha}}}}$. Furthermore, we have that
    \[
    \cone{(A\otimes{T_{n}^{\Omega_{\alpha}}}\xrightarrow{f\otimes{\idmap}}{B\otimes{T_{n}^{\Omega_{\alpha}}}})}=(\cone{(A\xrightarrow{f}{B}))\otimes{T_{n}^{\Omega_{\alpha}}}}.
    \]
    See Figure \ref{fig:cone-replace-kirby}.
\end{proposition}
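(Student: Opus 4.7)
The plan is to unwind both sides as explicit total complexes of the same underlying double complex and to verify they agree term by term. Recall from Definition 4.14 that $T_n^{\Omega_\alpha} = \Tot(\mathcal{D}^\alpha_{\mathcal{A}_n})$, where $\mathcal{D}^\alpha_{\mathcal{A}_n}$ is the 2-term double complex whose entries $\symmetrized{T_n^{\otimes(\alpha+2k)}}$ sit at homological positions $0$ and $-1$, connected by the $\idmap$ and $-\symmetrized{\acupdot}$ structural maps. Because tensoring with a fixed complex in $\Kom(\TLcat_n)$ distributes over direct sums and commutes with homological shifts, for any $X \in \Kom(\TLcat_n)$ there is a canonical identification
\[
X \otimes T_n^{\Omega_\alpha} \;=\; \Tot\bigl(X \otimes \mathcal{D}^\alpha_{\mathcal{A}_n}\bigr).
\]

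First, I would verify that $f \otimes \idmap$ is a well-defined chain map. On each summand $A \otimes \symmetrized{T_n^{\otimes(\alpha+2k)}}[i]$ with $i \in \{0,-1\}$, define the map to be $f \otimes \idmap_{\symmetrized{T_n^{\otimes(\alpha+2k)}}[i]}$. Since $f$ is a chain map and the structural maps of $\mathcal{D}^\alpha_{\mathcal{A}_n}$ act only on the belt factor (trivially on the $A$-factor), $f \otimes \idmap$ commutes with each component of the total differential, so it is indeed a chain map $A \otimes T_n^{\Omega_\alpha} \to B \otimes T_n^{\Omega_\alpha}$.

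To establish the cone identity, I would expand both sides. The underlying chain groups of $\cone(f \otimes \idmap)$ and of $\cone(f) \otimes T_n^{\Omega_\alpha}$ both decompose as
\[
\bigoplus_{k \geq 0, \, i \in \{0,-1\}} \bigl( A[1] \oplus B \bigr) \otimes \symmetrized{T_n^{\otimes(\alpha+2k)}}[i],
\]
and the total differential on each side decomposes into (i) internal differentials on $A$, $B$, and the belt terms, (ii) the cone component sending $A[1]$-summands to $B$-summands via $f$ (with a sign dictated by the shift), and (iii) the structural maps $\idmap$ and $-\symmetrized{\acupdot}$ between adjacent belt summands. Matching these three pieces summand by summand verifies the equality of chain complexes.

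The main obstacle I anticipate is purely bookkeeping: the homological shift $[1]$ interacts with both the cone construction and the totalization of the 2-term double complex, so one must fix Koszul sign conventions consistently throughout in order to promote the canonical summand-by-summand identification to an equality (rather than just an isomorphism) of chain complexes. Once the sign conventions are chosen uniformly, the proposition reduces to a formal, term-by-term check.
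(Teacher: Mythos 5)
Your proposal is correct but takes a noticeably different route from the paper. The paper's proof never unwinds $T_n^{\Omega_\alpha}$ into an explicit double complex. Instead it observes that the collection of chain maps $\{f \otimes \idmap^k : A \otimes T_n^{\otimes k} \to B \otimes T_n^{\otimes k}\}$ strictly commutes (by cobordism invariance, since $f$ and the ribbon/symmetrizing cobordisms act in disjoint regions) with the structural maps of the two directed systems $A \otimes \mathcal{A}_n^\alpha$ and $B \otimes \mathcal{A}_n^\alpha$, and then invokes the already-established Lemma~\ref{lem:forSwapsandSlides}: part (a) gives well-definedness of the induced map $f \otimes \idmap$ on homotopy colimits, and part (b) gives $\cone(f \otimes \idmap) = \hocolim(\mathcal{C}_n^\alpha)$ where $\mathcal{C}_n^\alpha$ is the directed system of level-wise cones. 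Finally, monoidality identifies each level-wise cone with $\cone(f) \otimes T_n^{\otimes m}$, so the hocolim is $\cone(f) \otimes T_n^{\Omega_\alpha}$. Your version, by contrast, peels off one layer of abstraction: it identifies $X \otimes T_n^{\Omega_\alpha}$ with $\Tot(X \otimes \mathcal{D}^\alpha_{\mathcal{A}_n})$ and then matches chain groups and the three differential components summand by summand. The paper's route buys you reuse of an existing lemma and keeps signs hidden inside that lemma's proof; your route is more elementary and self-contained but puts the entire Koszul sign burden in your lap, as you correctly flag. One small caveat worth stating explicitly in your write-up: the commutation of $f \otimes \idmap$ with the structural maps is not just formal tensor-product nonsense but also uses that those structural maps are $\idmap$ on the $A$ (resp.\ $B$) factor and are cobordism-induced on the belt factor — i.e.\ the disjointness/cobordism-invariance point the paper makes — so that the commutation is on the nose rather than merely up to homotopy; otherwise you would need the homotopy-commutative version, and an equality (as opposed to a homotopy equivalence) of cones would require additional care.
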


\begin{figure}[!h]
    \begin{center}
        \includegraphics[width=\linewidth]{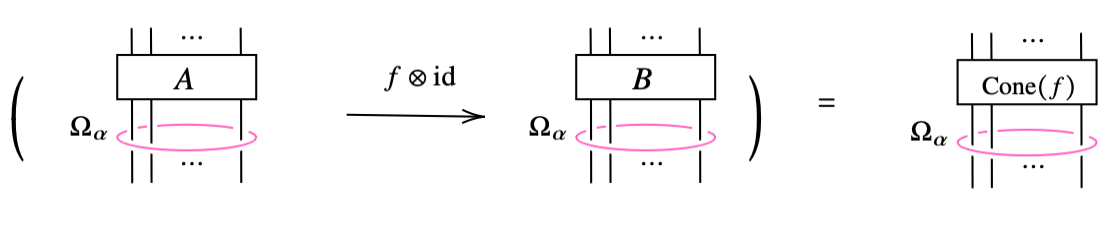}
    \end{center}
    \caption{The cone property for a Kirby-colored belt described in Proposition \ref{prop:kirbycone}.}
    \label{fig:cone-replace-kirby}
\end{figure}

\begin{proof}
    Let $\idmap^{k}:T_{n}^{\otimes{k}}\rightarrow{T_{n}^{\otimes{k}}}$ denote the identity map on $T_{n}^{\otimes{k}}$. Note first that, by cobordism invariance in $\Kom(\TLcat_{n})$, the chain maps $f\otimes{\idmap^{k}}:A\otimes{T_{n}^{\otimes{k}}}\rightarrow{B\otimes{T_{n}^{\otimes{k}}}}$ commute with the dotted ribbon map and the $\mathfrak{S}_{k}$-action permuting the $k$ belts. Thus, the collection of maps $\{f\otimes{\idmap^{k}}\}$ satisfies the hypothesis of Lemma \ref{lem:forSwapsandSlides} for directed systems:
    
    \[  A\otimes{\mathcal{A}_{n}^{\alpha}}:=A\otimes{\symmetrized{T_{n}^{\otimes{\alpha}}}\rightarrow{A\otimes{\symmetrized{T_{n}^{\otimes{\alpha+2}}}}}}\rightarrow{A\otimes{\symmetrized{T_{n}^{\otimes\alpha+4}}}}\rightarrow{   \cdots}
    \]
    \vspace{0.2mm}
    \[ B\otimes\mathcal{A}_{n}^{\alpha}:=B\otimes{\symmetrized{T_{n}^{\otimes{\alpha}}}}\rightarrow{B\otimes{\symmetrized{T_{n}^{\otimes{\alpha+2}}}}}\rightarrow{B\otimes{\symmetrized{T_{n}^{\otimes{\alpha+4}}}}}\rightarrow{\cdots}.
    \]
    \vspace{0.2mm}
    
    By notational abuse, let $f\otimes{\idmap}$ denote the collection $\{f\otimes{\idmap^{k}}\}$, by Lemma \ref{lem:forSwapsandSlides}(a), we have that $f\otimes{\idmap}$ is a well-defined map on the homotopy colimits 
    \[
    f\otimes{\idmap}:\hocolim(A\otimes\mathcal{A}_{n}^{\alpha})\rightarrow{\hocolim(B\otimes\mathcal{A}_{n}^{\alpha}).}
    \]
    
    Let $\mathcal{C}_{n}^{\alpha}$ denote the directed system of cones $\{\cone{(A\otimes{T_{n}^{\otimes\alpha+2m}}\xrightarrow{f\otimes{\idmap^{m}}}B\otimes{T_{n}^{\otimes\alpha+2m}}})\}_{m\in{\mathbb{N}}}$. By Lemma \ref{lem:forSwapsandSlides}(b), we also have the equality
    \begin{equation}\label{eq:cone=hocolim}
    \cone{(f\otimes{\idmap})}=\hocolim{(\mathcal{C}_{n}^{\alpha})}.
    \end{equation}
    
    Since $\cone{(A\otimes{T_{n}^{\otimes\alpha+2m}}\xrightarrow{f\otimes{\idmap^{m}}}{B\otimes{T_{n}^{\otimes\alpha+2m}}}})=\cone{(A\xrightarrow{f}B)}\otimes{T_{n}^{\otimes{m}}}$ by monoidality, we have that $\hocolim{(\CC^{\alpha})}$ is identically the chain complex $\cone{(A\xrightarrow{f}{B})}\otimes{T_{n}^{\Omega_{\alpha}}}$, so  \eqref{eq:cone=hocolim} becomes
    \[
    \cone{(A\otimes{T_{n}^{\Omega_{\alpha}}}\xrightarrow{f\otimes{\idmap}}{B\otimes{T_{n}^{\Omega_{\alpha}}}})}=\cone{(A\xrightarrow{f}{B})\otimes{T_{n}^{\Omega_{\alpha}}}},
    \]
    as desired.
\end{proof}

Since any chain complex $B$ in $\Kom(\TLcat_{n})$ is an iterated mapping cone of chain complexes associated to Temperley-Lieb diagrams, from Proposition \ref{prop:kirbycone} and Lemma \ref{lem:kirbyTLgenerator} we obtain the following corollary.

\begin{corollary}\label{cor:KirbyBSwap}
    Let $B$ be a chain complex in $\Kom(\TLcat_{n}^{k})$. Then $P^{\vee}_{k}\otimes{B}\otimes{T_{n}^{\Omega_{\alpha}}}\simeq{P^{\vee}_{k}\otimes{T_{k}^{\Omega_{\alpha}}}\otimes{B}}\simeq{0}$ for $0<{k}\leq{n}$. See Figure \ref{fig:swap(kirby-TL+projector}. 

\begin{figure}[!h]
    \centering
    \includegraphics[width=.8\linewidth]{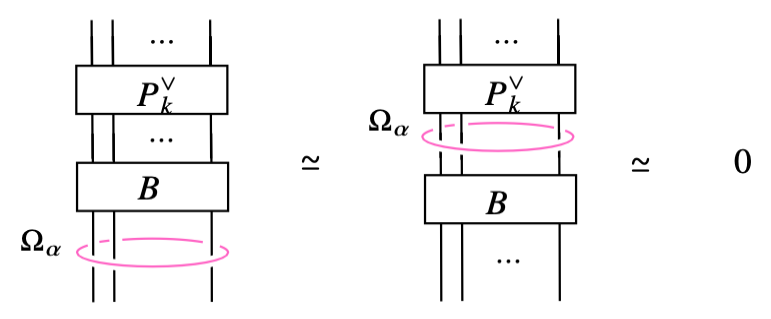}
    \caption{An illustration of Corollary \ref{cor:KirbyBSwap}.}
    \label{fig:swap(kirby-TL+projector}
\end{figure}

\begin{proof}
    Suppose first that the complex $B$ is given by a single $\TLcat_{n}^{k}$-diagram for a positive integer $k\leq{n}$. Then by Lemma \ref{lem:kirbyTLgenerator} we have that $P^{\vee}_{k}\otimes{B}\otimes{T_{n}^{\Omega_{\alpha}}}\simeq{P^{\vee}_{k}\otimes{T_{k}^{\Omega_{\alpha}}}}\otimes{B}$. The desired equivalence then follows as $P^{\vee}_{k}\otimes T_{k}^{\Omega_{\alpha}}\simeq{0}$ by Proposition \ref{prop:projectorkirbyprop}. Next, suppose that $B$ is an arbitrary chain complex in $\Kom(\TLcat_{n}^{k})$, then $P^{\vee}_{k}\otimes{B}\otimes{T_{n}^{\Omega_{\alpha}}}$ decomposes as a multicone where each chain complex is of the form $P^{\vee}_{k}\otimes{\tau}\otimes{T_{n}^{\Omega_{\alpha}}}$ where $\tau$ is a $\TLcat_{n}^{k}$-diagram. By above, each term of $P^{\vee}_{k}\otimes{B}\otimes{T_{n}^{\Omega_{\alpha}}}$ is chain homotopy equivalent to $0$ and therefore $P^{\vee}_{k}\otimes{B}\otimes{T_{n}^{\Omega_{\alpha}}}\simeq{P^{\vee}_{k}\otimes{T_{k}^{\Omega_{\alpha}}\otimes{B}}\simeq{0}}$ as desired.
\end{proof}

\end{corollary}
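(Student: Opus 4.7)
The plan is to first reduce to the case that $B$ is a single Temperley-Lieb diagram, and then use the already-established commutation lemma together with the projector-annihilation proposition to conclude contractibility.

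First I would observe that, by the remark following Lemma \ref{lem:kirbyTLgenerator}, the sliding equivalence $\tau \otimes T_n^{\Omega_\alpha} \simeq T_k^{\Omega_\alpha} \otimes \tau$ holds not just for square TL diagrams but for any crossingless planar diagram $\tau \in \Cob_n^k$. So if $B = \tau$ is a single TL diagram, then
\[
    P_k \otimes \tau \otimes T_n^{\Omega_\alpha} \simeq P_k \otimes T_k^{\Omega_\alpha} \otimes \tau,
\]
and since $0 < k \leq n$, Proposition \ref{prop:projectorkirbyprop} gives $P_k \otimes T_k^{\Omega_\alpha} \simeq 0$, so tensoring on the right with $\tau$ keeps it contractible.

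Next I would upgrade from single diagrams to an arbitrary complex $B \in \Kom(\TLcat_n^k)$. Every such $B$ can be written as an iterated mapping cone built from shifted TL diagrams. Proposition \ref{prop:kirbycone} says that taking a cone commutes (as a chain complex, not just up to homotopy) with tensoring by $T_n^{\Omega_\alpha}$; in particular, for a chain map $f: A_1 \to A_2$ of complexes in $\Kom(\TLcat_n^k)$,
\[
    \cone(f) \otimes T_n^{\Omega_\alpha}
    = \cone(f \otimes \idmap).
\]
Tensoring further on the left with $P_k$ preserves this cone description. So I would induct on the homological length of $B$: at each stage, $P_k \otimes B \otimes T_n^{\Omega_\alpha}$ is realized as the cone of a chain map between two complexes to which the inductive hypothesis applies and which are therefore contractible, making the cone itself contractible.

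The main technical point, and the only place one has to be careful, is that the induction has to be carried out at the level of the \emph{homotopy colimit} rather than the naive colimit, since we have only homotopy equivalences (e.g.\ the sliding equivalences use Reidemeister II moves and are not strict equalities). That is why Proposition \ref{prop:kirbycone} is stated as an equality of cones of homotopy colimits, and why Lemma \ref{lem:forSwapsandSlides} parts (b) and (c) are needed: if each $P_k \otimes \tau \otimes T_n^{\Omega_\alpha}$ is contractible for the TL diagrams $\tau$ appearing as summands of $B$, then the cone of any map between two such contractible objects remains contractible, and this persists through finitely many cone-building steps. Having verified both $P_k \otimes B \otimes T_n^{\Omega_\alpha} \simeq 0$ and $P_k \otimes T_k^{\Omega_\alpha} \otimes B \simeq 0$ this way, the two-sided equivalence claimed in the corollary follows.
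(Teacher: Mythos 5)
Your proposal is correct and follows essentially the same route as the paper: reduce to a single Temperley-Lieb diagram via the remark after Lemma \ref{lem:kirbyTLgenerator}, apply Proposition \ref{prop:projectorkirbyprop}, then extend to general $B$ through the cone decomposition of Proposition \ref{prop:kirbycone}. You spell out the induction on homological length and the role of Lemma \ref{lem:forSwapsandSlides} a bit more explicitly than the paper's brief ``multicone'' remark, which is a clarifying rather than a substantive difference; the only small overstatement is at the very end, since $P_k \otimes T_k^{\Omega_\alpha} \otimes B \simeq 0$ follows directly from $P_k \otimes T_k^{\Omega_\alpha} \simeq 0$ without re-running the cone induction.
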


Recalling that higher order projectors factor as $P^{\vee}_{n,k}=A\otimes{P^{\vee}_{k}}\otimes{B}$, Corollary \ref{cor:KirbyBSwap} and Proposition \ref{prop:projectorkirbyprop} allows us to conclude the following.

\begin{corollary}\label{cor:PnkkillsTnOmega}
    For an integer $0<k\leq{n}$, the complex $P^{\vee}_{n,k}\otimes{T_{n}^{\Omega_{\alpha}}}$ is contractible.
\end{corollary}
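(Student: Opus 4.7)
The plan is to leverage the factorization property of the higher order projectors recalled just before the corollary's statement, due to Cooper--Hogancamp. Specifically, since $\tau(P_k) = k$ when viewed in $\Kom(\TLcat_k)$, we may write
\[
    P_{n,k} \simeq A \otimes P_k \otimes B
\]
for some $A \in \Kom(\TLcat_k^n)$ and $B \in \Kom(\TLcat_n^k)$, both necessarily of through-degree $k$. The factor $B$ has top endpoints matching the bottom of $P_k$ and bottom endpoints matching the top of $T_n^{\Omega_\alpha}$, so we can meaningfully apply the results of the previous subsection to the combined piece $P_k \otimes B \otimes T_n^{\Omega_\alpha}$.

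First I would record the factorization and rewrite
\[
    P_{n,k} \otimes T_n^{\Omega_\alpha}
    \;\simeq\; A \otimes \bigl( P_k \otimes B \otimes T_n^{\Omega_\alpha} \bigr).
\]
Next I would apply Corollary \ref{cor:KirbyBSwap} (with the roles of $n$ and $k$ in the statement of that corollary matching our current $n$ and $k$, noting that the hypothesis $0 < k \leq n$ is exactly what is assumed here) to conclude that
\[
    P_k \otimes B \otimes T_n^{\Omega_\alpha} \;\simeq\; 0.
\]
Finally, I would appeal to the fact that tensoring a contractible complex on the left by any complex $A \in \Kom(\TLcat_k^n)$ yields a contractible complex (the null-homotopy on $P_k \otimes B \otimes T_n^{\Omega_\alpha}$ extends to a null-homotopy on the tensor product by taking $\idmap_A \otimes h$), giving $P_{n,k} \otimes T_n^{\Omega_\alpha} \simeq 0$.

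The argument is genuinely short because the heavy lifting is already done: Proposition \ref{prop:projectorkirbyprop} established contractibility for $P_n \otimes T_n^{\Omega_\alpha}$ using the Ext-group computation and the fact that the symmetrized dotted annulus squares to zero, and Corollary \ref{cor:KirbyBSwap} already upgraded this from $P_n$ alone to $P_k \otimes B$ for arbitrary $B$. The only subtle point to verify cleanly is that the Cooper--Hogancamp factorization $P_{n,k} \simeq A \otimes P_k \otimes B$ can be realized so that the factor adjacent to the belt is $P_k \otimes B$ (as opposed to $A \otimes P_k$); this is exactly the content of Observation 8.8 in \cite{Cooper-Hog-family}, which allows one to choose either side, so no real obstacle arises.
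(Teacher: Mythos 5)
Your proposal is correct and matches the paper's approach: the paper simply asserts the corollary after recalling the factorization $P_{n,k} \simeq A \otimes P_k \otimes B$ and pointing to Corollary \ref{cor:KirbyBSwap} and Proposition \ref{prop:projectorkirbyprop}, which is exactly the argument you spell out. Your expanded version (applying Corollary \ref{cor:KirbyBSwap} to the piece $P_k \otimes B \otimes T_n^{\Omega_\alpha}$ and then tensoring the resulting null-homotopy with $\idmap_A$) is a clean and accurate fleshing-out of the same proof.
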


We are now ready to prove the main result of this section.

\begin{proposition}\label{prop:kirbybeltcontractible} If $n$ is an odd positive integer, then $T_{n}^{\Omega_{\alpha}}\simeq{0}$.

\begin{proof}
    For any $n$, by \eqref{eq:resofidentity}, we can express $T_{n}^{\Omega_{\alpha}}=\one_{n}\otimes{T_{n}^{\Omega_{\alpha}}}$ as 

    \[
    T_{n}^{\Omega_{\alpha}}\simeq
    P^{\vee}_{n,n\text{(mod 2)}}\otimes{T_{n}^{\Omega_{\alpha}}}\rightarrow{...}\rightarrow{P^{\vee}_{n,n-2}\otimes{T_{n}^{\Omega_{\alpha}}}\rightarrow{P_n\dual\otimes{T_{n}^{\Omega_{\alpha}}}}}.
    \]

    If $n$ is odd, then Corollary \ref{cor:PnkkillsTnOmega} implies $T_{n}^{\Omega_{\alpha}}\simeq{0}$. 
\end{proof}
\end{proposition}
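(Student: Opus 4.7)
The plan is to use the resolution of the identity \eqref{eq:resofidentity} to rewrite $\one_n$ as an iterated mapping cone of higher order projectors, then tensor with $T_n^{\Omega_\alpha}$ and argue that every constituent piece is contractible. Concretely, I would first note that $T_n^{\Omega_\alpha} \simeq \one_n \otimes T_n^{\Omega_\alpha}$, and apply Proposition \ref{prop:kirbycone} iteratively so that tensoring the resolution
\[
\one_n \simeq \left(P_{n,n(\mathrm{mod}\ 2)} \to P_{n,n(\mathrm{mod}\ 2)+2} \to \cdots \to P_{n,n-2} \to P_n \right)
\]
with $T_n^{\Omega_\alpha}$ produces a multicone whose successive terms are exactly $P_{n,k}\otimes T_n^{\Omega_\alpha}$ for the appropriate values of $k$.

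The crux of the argument is that each $P_{n,k}\otimes T_n^{\Omega_\alpha}$ in this expression is contractible, which is precisely the content of Corollary \ref{cor:PnkkillsTnOmega}. I would remind the reader that this corollary is obtained by combining the factorization $P_{n,k} \simeq A \otimes P_k \otimes B$ (for $A, B$ of through-degree $\geq k$) with the sliding property of Corollary \ref{cor:KirbyBSwap}: the Kirby belt can be pushed through a planar tangle, converting $B\otimes T_n^{\Omega_\alpha}$ into $T_k^{\Omega_\alpha}\otimes B$, after which Proposition \ref{prop:projectorkirbyprop} gives $P_k \otimes T_k^{\Omega_\alpha}\simeq 0$ for any $k > 0$.

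Now the parity hypothesis does its work: when $n$ is odd, the indices appearing in the resolution of the identity are $\{1, 3, 5, \ldots, n-2, n\}$, which are all \emph{strictly positive} and satisfy $0 < k \leq n$. Therefore \emph{every} entry $P_{n,k}\otimes T_n^{\Omega_\alpha}$ appearing in the multicone is contractible by Corollary \ref{cor:PnkkillsTnOmega}. Since an iterated cone built entirely from contractible pieces is itself contractible, we conclude $T_n^{\Omega_\alpha} \simeq 0$.

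I expect the main obstacle to be purely bookkeeping: one must verify that the multicone obtained by applying $(-)\otimes T_n^{\Omega_\alpha}$ termwise to the resolution of the identity genuinely agrees with $\one_n \otimes T_n^{\Omega_\alpha}$, and that contractibility of each vertex of a finite twisted complex does imply contractibility of the total complex. The former is an application of Proposition \ref{prop:kirbycone} to each constituent mapping cone of the resolution, while the latter is standard homological algebra (Gaussian elimination, or equivalently, induction on the number of nonzero terms). The essential parity-dependent step reduces simply to observing that no even index, in particular $k = 0$, appears in the resolution when $n$ is odd --- which is critical because the Rozansky projector $P_{n,0}$ does not annihilate $T_n^{\Omega_\alpha}$.
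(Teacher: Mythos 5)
Your proposal is correct and follows essentially the same route as the paper: tensor the resolution of the identity \eqref{eq:resofidentity} against $T_n^{\Omega_\alpha}$ via Proposition \ref{prop:kirbycone}, kill each piece $P_{n,k}\otimes T_n^{\Omega_\alpha}$ with Corollary \ref{cor:PnkkillsTnOmega}, and observe that odd $n$ forces all indices $k$ in the resolution to be strictly positive so that the corollary applies throughout. The paper's version is terser (it does not explicitly cite Proposition \ref{prop:kirbycone} or spell out the twisted-complex bookkeeping), but the underlying argument is identical.
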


Since the homology of the trace of $T_{n}^{\Omega_{\alpha}}$ is isomorphic to the skein lasagna module $\skein_{0}^{2}(S^{2}\times{B^{2}};\widetilde{\one}_{n},\alpha)$, Proposition \ref{prop:kirbybeltcontractible} produces the following immediate corollary.

\begin{corollary}\label{cor:S2xB2is0} Let $n$ be odd and let $\alpha\in{H_{2}^{\widetilde{\one}_{n}}(S^{2}\times{B^{2}})}\cong{H_{2}(S^{2}\times{B^{2}})}\cong \Z$. We have that $\skein^{2}_{0}(S^{2}\times{B^{2}};\widetilde{\one}_{n},\alpha)\cong{0}$. 

\begin{proof}
    If $n$ is odd, then $T_{n}^{\Omega_{\alpha}}\simeq{0}$, implying that $H^{*}(\close(T_{n}^{\Omega_{\alpha}}))\cong\skein_{0}^{2}(S^{2}\times{B^{2}};\widetilde{\one}_{n},\alpha)\cong{0}$ by Proposition \ref{prop:everythingthesame}.
\end{proof}
\end{corollary}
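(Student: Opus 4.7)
The plan is to assemble the result directly from two pieces already established earlier in the excerpt: Proposition \ref{prop:everythingthesame}, which identifies the skein lasagna module at issue with the homology of a specific chain complex, and Proposition \ref{prop:kirbybeltcontractible}, which shows that chain complex is contractible in the odd-$n$ case.

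More concretely, I would first invoke Proposition \ref{prop:everythingthesame} to rewrite
\[
    \skein^{2}_{0}(S^{2}\times B^{2};\widetilde{\one}_{n},\alpha)
    \;\cong\; H^{*}\!\left(\close(T_{n}^{\Omega_{\alpha}})\right).
\]
This reduces the problem from a computation inside a module of lasagna fillings to a question purely about chain complexes in $\Kom(\BNcat)$. Next, since the trace functor $\close:\TLcat_{n}\to\BNcat$ (Definition \ref{def:trace}) is an additive functor between dg-categories, it preserves chain homotopy equivalences; therefore any contraction of $T_{n}^{\Omega_{\alpha}}$ inside $\Kom(\TLcat_{n})$ descends, termwise in the double complex $\mathcal{D}^{\alpha}_{\mathcal{A}_{n}}$, to a contraction of $\close(T_{n}^{\Omega_{\alpha}})=\Tot(\widehat{\mathcal{D}}^{\alpha}_{\mathcal{A}_{n}})$.

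Finally, I would apply Proposition \ref{prop:kirbybeltcontractible}: when $n$ is odd, the resolution-of-the-identity argument combined with Corollary \ref{cor:PnkkillsTnOmega} gives $T_{n}^{\Omega_{\alpha}}\simeq 0$. Pushing this through the trace and taking homology yields $H^{*}(\close(T_{n}^{\Omega_{\alpha}}))=0$, which is the claim.

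There is essentially no obstacle here; the corollary is formal once Propositions \ref{prop:everythingthesame} and \ref{prop:kirbybeltcontractible} are in hand. The only mild subtlety worth explicitly noting is that the identification in Proposition \ref{prop:everythingthesame} depends on recognizing $\widehat{\mathcal{D}}^{\alpha}_{\mathcal{A}_{n}}$ as computing the relative cabled Khovanov homology of $(U_{0},\widehat{\one}_{n})$ and then applying Remark \ref{rmk:cabledlasagna}; one must be sure the homological level $\alpha\in H_{2}(S^{2}\times B^{2};\Z)$ on the lasagna side matches the parity index $\alpha$ used to build the directed system $\mathcal{A}_{n}^{\alpha}$, but this has been set up by construction in Section \ref{subsec:kirbybeltconstruct}.
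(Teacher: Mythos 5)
Your proposal is correct and takes essentially the same route as the paper: cite Proposition \ref{prop:everythingthesame} for the identification $\skein^{2}_{0}(S^{2}\times B^{2};\widetilde{\one}_{n},\alpha)\cong H^{*}(\close(T_{n}^{\Omega_{\alpha}}))$, and Proposition \ref{prop:kirbybeltcontractible} for $T_{n}^{\Omega_{\alpha}}\simeq 0$ when $n$ is odd. The extra remark about the trace functor preserving homotopy equivalences is harmless but is already subsumed by Proposition \ref{prop:everythingthesame} (via Lemma \ref{lem:Hcone} and Corollary \ref{cor:Hcone}), which is exactly the step the paper itself cites.
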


We apply this Corollary \ref{cor:S2xB2is0} to compute $\skein_{0}^{2}(S^{2}\times{S^{2}};\emptyset,\underline{\alpha})$ for specific homological levels.

\begin{theorem}\label{thrm:S2xS2lasagna}
Let $\underline{\alpha}=(\alpha_{1},\alpha_{2})\in{H_{2}(S^{2}\times{S^{2}};\mathbb{Z})}\cong{\mathbb{Z}^{2}}$ with at least one $\alpha_{1}$ or $\alpha_{2}$ odd, we have that $\skein_{0}^{2}(S^{2}\times{S^{2}};\emptyset,\underline{\alpha})\cong{0}$.

\begin{proof} Recall that a Kirby diagram of $S^{2}\times{S^{2}}$ is the Hopf link $L=L_{1}\cup{L_{2}}$ with $0$-framing on both components. Let $I=\mathbb{Z}_{\geq{0}}$ and $J=\mathbb{Z}_{\geq{0}}$, both equipped with the usual poset relation. The cabling directed system of $(S^{2}\times{S^{2}},\emptyset)$ at homological level $\underline{\alpha}$, denoted $\mathcal{B}^{\underline{\alpha}}$, lies over the indexing set $I\times{J}$. Let $D^{\underline{\alpha}}(0,0)$ denote the cabling of the Hopf link corresponding to $\underline{\alpha}$ 
and associated to the index $(0,0)$ (that is, the cable of the Hopf link with $|\alpha_{i}|$ parallel strands for the $i$th component $L_{i}$, oriented according to the sign of $\alpha_{i}$. See the left-most diagram in Figure \ref{fig:alpha1alpha2cabledhopf}).

\begin{figure}[!h]
    \begin{center}
        \includegraphics[width=\linewidth]{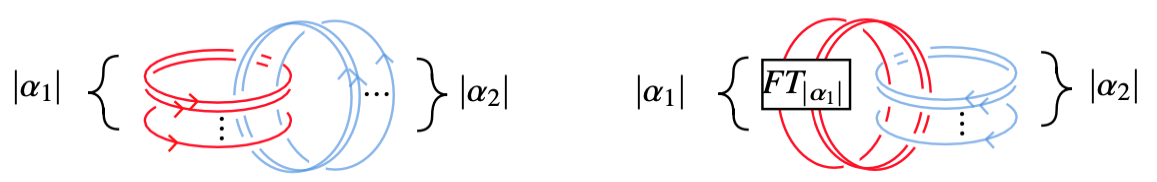}
    \end{center}
    \caption{\textbf{Left:} The diagram $D^{\underline{\alpha}}(0,0)$ for $\underline{\alpha}=(\alpha_{1},\alpha_{2})$, representing the $(0,0)$ object in the cabling directed system of $S^{2}\times{S^{2}}$. \textbf{Right:} The diagram $D^{(\alpha_{1},\alpha_{2})}(0,0)$ representing the $(0,0)$ object in the cabling directed system of $\mathbb{C}P^{2}\# \overline{\mathbb{C}P}^{2}$.}
    \label{fig:alpha1alpha2cabledhopf}
\end{figure}
 
Let $D^{\underline{\alpha}}(i,j)$ be the link diagram obtained from $D^{\underline{\alpha}}(0,0)$ by adding $2i$ parallel strands to the cable of $L_{1}$, with $i$ positively oriented and $i$ negatively oriented, and adding $2j$ parallel strands to the cable of $L_{2}$, with $j$ positively oriented and $j$ negatively oriented.
By Definition (\ref{def:CDS}), the $(i,j)$th object of $\mathcal{B}^{\underline{\alpha}}$ is $\KhR_{2}(\symmetrized{D^{\underline{\alpha}}(i,j)})$ where $\symmetrized{D^{\underline{\alpha}}(i,j)}$ is the complex symmetrized under the $\mathfrak{S}_{|\alpha_{1}|+2i}\times{\mathfrak{S}_{|\alpha_{2}|+2j}}$ action on parallel strands. The morphisms $(i,j)\rightarrow{(i+2,j)}$ (respectively, $(i,j)\rightarrow{(i,j+2)}$) are the symmetrized dotted ribbon maps associated to cables of $L_{1}$ (respectively, $L_{2}$).

Since $\skein_{0}^{2}(S^{2}\times{S^{2}};\emptyset,\underline{\alpha})\cong{\colim_{I\times{J}}(\mathcal{B}^{\underline{\alpha}})}\cong{\colim_{I}\colim_{J}(\mathcal{B}^{\underline{\alpha}})}$, we can compute the skein lasagna module of $(S^{2}\times{S^{2}},\emptyset)$ by computing the colimits of the directed systems given by a fixed $i\in{I}$ (or fixed $j\in{J})$. Without loss of generality, suppose that $\alpha_{1}$ is an odd integer, and fix an $i\in{I}$. The corresponding cabling directed system is of the form
\[
\cdots{\rightarrow{\KhR_{2}(\symmetrized{D^{\underline{\alpha}}(|\alpha_{1}|+2i,|\alpha_{2|}+2j)}}})\rightarrow{\KhR_{2}(\symmetrized{D^{\underline{\alpha}}(|\alpha_{1}|+2i,|\alpha_{2}|+2(j+1)}))}\rightarrow{\cdots}.
\]

Observe that the colimits of these directed systems are precisely $\skein_{0}^{2}(S^{2}\times{B^{2}};\widetilde{\one}_{|\alpha_{1}|+2i},\alpha_{2})$ with the strands of $\widetilde{\one}_{|\alpha_{1}|+2i}$ oriented. In particular, we have that

\[
\colim_{I\times{J}}(\mathcal{B}^{\underline{\alpha}})=\colim_{I}\colim_{J}(\mathcal{B}^{\underline{\alpha}})\cong{\colim_{I}(\skein_{0}^{2}(S^{2}\times{B^{2}};\widetilde{\one}_{|\alpha_{1}|+2i}},\alpha_{2})).
\]

As $\alpha_{1}$ is odd, $|\alpha_{1}|+2i$ is odd for all $i$, so $\skein_{0}^{2}(S^{2}\times{B^{2}};\widetilde{\one}_{|\alpha_{1}|+2i},\alpha_{2})\cong{0}$ for all $i$ by Corollary \ref{cor:S2xB2is0}. Therefore, $\colim_{I\times{J}}(\mathcal{B}^{\underline{\alpha}})\cong{0}$, as desired.
\end{proof}
\end{theorem}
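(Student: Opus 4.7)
The plan is to compute $\skein_{0}^{2}(S^{2}\times{S^{2}};\emptyset,\underline{\alpha})$ via Manolescu-Neithalath's $2$-handle formula applied to the standard $0$-framed Hopf link Kirby diagram $L = L_1 \cup L_2$ for $S^2 \times S^2$. This realizes the skein lasagna module as the colimit of a two-dimensional cabling directed system $\mathcal{D}^{\underline{\alpha}}$ indexed by $\Z_{\geq 0} \times \Z_{\geq 0}$, where the $(i,j)$ entry is the symmetrized $\KhR_2$ of the cable $D^{\underline{\alpha}}(i,j)$ obtained by adding $i$ pairs of oppositely oriented parallel strands to $L_1$ and $j$ pairs to $L_2$, and the structure maps are symmetrized dotted annulus cobordism maps.

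Next, I would exploit the fact that iterated colimits may be computed by fixing one variable and taking the colimit in the other: $\colim_{I\times J} \mathcal{D}^{\underline{\alpha}} \cong \colim_I \colim_J \mathcal{D}^{\underline{\alpha}}$. The key observation is that for each fixed $i$, the inner directed system in the $j$-direction is precisely (up to an explicit braiding of the first component's parallel strands through the second component) the cabling directed system whose colimit is the relative skein lasagna module $\skein_0^2(S^2 \times B^2; \widetilde{\one}_{|\alpha_1|+2i}, \alpha_2)$ considered in Proposition \ref{prop:everythingthesame} and Corollary \ref{cor:everythingthesame(w/braid)}. Geometrically, this is because fixing the first component reduces the Kirby calculus picture to a single $0$-framed unknot (giving an $S^2 \times B^2$ handlebody) with the cabled first component persisting as the geometrically essential boundary link.

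Without loss of generality assume $\alpha_1$ is odd; then $|\alpha_1| + 2i$ is odd for every $i \in \Z_{\geq 0}$. By Corollary \ref{cor:S2xB2is0}, each inner colimit $\skein_0^2(S^2 \times B^2; \widetilde{\one}_{|\alpha_1|+2i}, \alpha_2)$ vanishes. Taking the colimit over $i$ of a directed system of zero vector spaces yields zero, so $\colim_{I \times J} \mathcal{D}^{\underline{\alpha}} \cong 0$, proving $\skein_{0}^{2}(S^{2}\times{S^{2}};\emptyset,\underline{\alpha}) \cong 0$. The symmetric case when $\alpha_2$ is odd follows identically by swapping the roles of $I$ and $J$.

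The main conceptual step is the identification of the one-variable slices of the Hopf link cabling system with the relative cabling systems for $S^2 \times B^2$ with a geometrically essential boundary link; once this identification is made, everything reduces cleanly to the already-established vanishing result of Corollary \ref{cor:S2xB2is0}. The only mild subtlety is making sure that orientations on the essential boundary link $\widetilde{\one}_{|\alpha_1|+2i}$ (mixed positive/negative depending on the sign of $\alpha_1$) do not affect the vanishing argument, but since the proof of Proposition \ref{prop:kirbybeltcontractible} depends only on the parity of the number of strands and not their orientations, this presents no real obstacle.
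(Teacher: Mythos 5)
Your proposal follows essentially the same route as the paper's proof: realize the skein lasagna module as the colimit of the Hopf link cabling directed system, compute it as an iterated colimit $\colim_I \colim_J$, identify the inner ($j$-direction) slices with $\skein_0^2(S^2 \times B^2; \widetilde{\one}_{|\alpha_1|+2i}, \alpha_2)$, and apply Corollary \ref{cor:S2xB2is0} since $|\alpha_1|+2i$ stays odd. The only cosmetic difference is that you flag the orientation subtlety explicitly, which the paper addresses implicitly with its ``strands of $\widetilde{\one}_{|\alpha_1|+2i}$ oriented'' parenthetical.
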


There is a corresponding result for $S^{2}\tilde{\times}S^{2}\cong{\mathbb{C}P^{2}\#{\overline{\mathbb{C}P}^{2}}}$. Recall that a Kirby diagram representing $\mathbb{C}P^{2}\# \overline{\mathbb{C}P}^{2}$ is a Hopf link $L=L_{1}\cup{L_{2}}$ where $L_{1}$ has $+1$ framing and $L_{2}$ has $0$-framing. Although the cabling directed system corresponding to $(\mathbb{C}P^{2}\# \overline{\mathbb{C}P}^{2},\emptyset)$ does not admit the same symmetry of indexing sets, we have the following.

\begin{corollary}\label{cor:twistedS2xS2lasagna}
    Let $L=L_{1}\cup{L_{2}}$ be the framed oriented Hopf link in the Kirby diagram of $\mathbb{C}P^{2}\#{\overline{\mathbb{C}P}^{2}}$, and let $\alpha_{1}$  (respectively $\alpha_{2}$) represent the generator of $H_{2}(\mathbb{C}P^{2}\# \overline{\mathbb{C}P}^{2};\mathbb{Z})$ corresponding to the $(+1)$-framed component $L_{1}$ (respectively, the $0$-framed component $L_{2}$). Then, if $\alpha_{1}$ is odd, we have 
    \[
        \skein_{0}^{2}(\mathbb{C}P^{2}\# \overline{\mathbb{C}P}^{2};\emptyset,(\alpha_{1},\alpha_{2}))\cong{0}.
    \]

    \begin{proof}
        Let $\FT_{n}$ denote the full-twist tangle on $n$ strands. The skein lasagna module of $\mathbb{C}P^{2}\#{\overline{\mathbb{C}P}^{2}}$ at level $(\alpha_{1},\alpha_{2})\in{H_{2}(\mathbb{C}P^{2}\# \overline{\mathbb{C}P}^{2};\mathbb{Z})}$ is isomorphic to the colimit of the cabling directed system of $L$. Denote this cabling directed system by $\widetilde{\mathcal{B}}^{(\alpha_{1},\alpha_{2})}$. Define indexing sets $I$ and $J$ as in the proof of Theorem \ref{thrm:S2xS2lasagna} and observe that, unlike the case for $S^{2}\times{S^{2}}$, cables of the component $L_{1}$ are $T(n,n)$ torus links. Therefore, by fixing $i\in{I}$, the colimit of the corresponding directed system is instead isomorphic to $H^{*}(\close(\FT_{|\alpha_{1}|+2i}\otimes{T_{|\alpha_{1}|+2i}^{\Omega_{\alpha_{2}}})})\cong{\skein_{0}^{2}(S^{2}\times{B^{2}};\widetilde{\FT}_{|\alpha_{1}|+2i},\alpha_{2})}$ by Corollary \ref{cor:everythingthesame(w/braid)}. However, since $\alpha_{1}$ is odd and therefore $|\alpha_{1}|+2i$ is odd for all $i$, we have that $T^{\Omega_{\alpha_{2}}}_{|\alpha_{1}|+2i}\simeq{0}$, and therefore $H^{*}(\close(\FT_{|\alpha_{1}|+2i}\otimes{T_{|\alpha_{1}|+2i}^{\Omega_{\alpha_{2}}})})\cong{0}$ for all $i$. It follows that

        \begin{align*}
        \skein_{0}^{2}(\mathbb{C}P^{2}\# \overline{\mathbb{C}P}^{2};\emptyset,(\alpha_{1},\alpha_{2}))&\cong{\colim_{I\times{J}}(\widetilde{\mathcal{B}}^{(\alpha_{1},\alpha_{2})}})\\
        &\cong{\colim_{I}(\skein_{0}^{2}(S^{2}\times{B^{2}};\widetilde{\FT}_{|\alpha_{1}|+2i},\alpha_{2})})\\
        &\cong{0}.
        \end{align*}

    \end{proof}
\end{corollary}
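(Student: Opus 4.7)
The plan is to mirror the proof of Theorem \ref{thrm:S2xS2lasagna}, with the key modification being that cables of the $(+1)$-framed component now carry a full twist. By the Manolescu-Neithalath $2$-handlebody formula, $\skein_{0}^{2}(\mathbb{C}P^{2}\# \overline{\mathbb{C}P}^{2};\emptyset,(\alpha_{1},\alpha_{2}))$ is isomorphic to the colimit of the cabling directed system $\widetilde{\mathcal{D}}^{(\alpha_{1},\alpha_{2})}$ associated to the Kirby diagram. I would index this system over $I \times J$ with $I = J = \mathbb{Z}_{\geq 0}$, where the $(i,j)$-th term corresponds to adding $2i$ extra antiparallel strands to the cable of $L_1$ and $2j$ extra antiparallel strands to the cable of $L_2$.

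Next, I would use the standard fact that a double colimit can be computed iteratively, so it suffices to compute $\colim_I \colim_J \widetilde{\mathcal{D}}^{(\alpha_1,\alpha_2)}$. I would fix $i \in I$ and analyze the inner colimit over $J$. Because the component $L_1$ has framing $+1$, taking the $(|\alpha_1|+2i)$-cable of $L_1$ produces a full-twist braid $\FT_{|\alpha_1|+2i}$ inside the belted diagram, rather than the identity braid as in the $S^2 \times S^2$ case. Applying Corollary \ref{cor:everythingthesame(w/braid)} with $\beta = \FT_{|\alpha_1|+2i}$, the inner colimit over $J$ is isomorphic to $H^{*}(\close(\FT_{|\alpha_1|+2i} \otimes T_{|\alpha_1|+2i}^{\Omega_{\alpha_2}}))$, which also equals $\skein_{0}^{2}(S^{2}\times{B^{2}};\widetilde{\FT}_{|\alpha_1|+2i},\alpha_{2})$.

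Now I invoke the odd-parity hypothesis: since $\alpha_1$ is odd, the integer $|\alpha_1|+2i$ is odd for every $i \in I$. By Proposition \ref{prop:kirbybeltcontractible}, the Kirby-belted identity $T_{|\alpha_1|+2i}^{\Omega_{\alpha_2}}$ is contractible. Tensoring with the (chain complex associated to the) full-twist braid $\FT_{|\alpha_1|+2i}$ preserves contractibility, so $\FT_{|\alpha_1|+2i} \otimes T_{|\alpha_1|+2i}^{\Omega_{\alpha_2}} \simeq 0$, and taking trace and homology yields $0$ for every fixed $i$.

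Finally, the outer colimit over $I$ is a colimit of zero vector spaces, which is zero. The conclusion $\skein_{0}^{2}(\mathbb{C}P^{2}\# \overline{\mathbb{C}P}^{2};\emptyset,(\alpha_{1},\alpha_{2})) \cong 0$ follows. The main conceptual point is that, unlike in the $S^2 \times S^2$ case where the cabling indexing set is symmetric in the two factors, here one must pick the odd-framed side to do the inner colimit; fortunately, the presence of the full twist is harmless because the Kirby belt already annihilates any tangle stacked on top of it in the odd-strand case, by the contractibility of $T_{n}^{\Omega_\alpha}$ for odd $n$. I do not anticipate any significant obstacle beyond correctly setting up the iterated colimit and citing the right form of Corollary \ref{cor:everythingthesame(w/braid)}.
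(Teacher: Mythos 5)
Your proposal is correct and follows essentially the same route as the paper: iterate the colimit over $I \times J$, fix the $L_1$-cabling index $i$, identify the inner colimit with $H^{*}(\close(\FT_{|\alpha_1|+2i} \otimes T_{|\alpha_1|+2i}^{\Omega_{\alpha_2}}))$ via Corollary \ref{cor:everythingthesame(w/braid)}, and then invoke Proposition \ref{prop:kirbybeltcontractible} using the oddness of $|\alpha_1|+2i$. Your extra remark that tensoring the contractible $T_{n}^{\Omega_{\alpha_2}}$ with the bounded complex $\FT_{n}$ preserves contractibility is correct and makes explicit a step the paper leaves implicit.
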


Theorem \ref{thrm:S2xS2lasagna} and Corollary \ref{cor:twistedS2xS2lasagna} provide a partial picture of the skein lasagna modules of $S^{2}\times{S^{2}}$ and $S^{2}\tilde{\times}S^{2}$. To complete this picture, we now comment on the case where the homological levels have only even values.

\subsection{Even homological levels}

If the number of strands $n$ is even, by the resolution of the $n$ strand identity braid, by the argument used in the proof of Proposition \ref{prop:kirbybeltcontractible}, we have instead that $T_{n}^{\Omega_{\alpha}}\simeq{P^{\vee}_{n,0}\otimes{T_{n}^{\Omega_{\alpha}}}}$. The higher order projector $P^{\vee}_{n,0}$ has through-degree $0$. Cobordisms between tangles with through-degree $0$ have a certain splitting property.

\begin{definition}
    Let $T=T_{0}\cup{T_{1}}$ be a split tangle (so the connected components $T_{i}$ may each be placed in a $3$-ball $B^{3}_{i}$ such that $B_{0}^{3}\cap{B_{1}^{3}}=\emptyset)$. A cobordism between split tangles $C:T\rightarrow{T^{\prime}}$ is a \emph{split cobordism} if it can be written as $C=C_{0}\cup{C_{1}}$, where each $C_{i}:T_{i}\rightarrow{T^{\prime}_{i}}$ is a tangle cobordism entirely contained in $B_{i}\times{[0,1]}$.
\end{definition}

By neck-cutting, cobordism maps in a Bar-Natan cobordism category between through-degree $0$ tangles can be reinterpreted as a sum of split cobordism maps.
Hence, the differentials of a chain complex of through-degree $0$ tangles can be realized as linear combinations of split cobordism maps. With this in mind, we prove the following sliding-off property for the Kirby-colored belt on through-degree $0$ chain complexes.

\begin{theorem}\label{thrm:kirbyslideoff}
        Let $A$ be a chain complex in $\Kom(\TLcat_{n})$ of through-degree $0$, let $U^{k}$ denote the $k$ component unlink, and let $U^{\Omega_{\alpha}}$ denote the Kirby colored $0$-framed unknot. Then there is a chain homotopy equivalence $A\otimes{T_{n}^{\Omega_{\alpha}}}\simeq{A\sqcup{U^{\Omega_{\alpha}}}}$ (using the notation from Remark \ref{rmk:sqcupforcomplexes}); see Figure \ref{fig:kirbyslideoff}.
\end{theorem}

\begin{proof}
    We begin by showing that the chain complex $A\otimes{T_{n}^{\otimes{k}}}$ is chain homotopy equivalent to the complex $A\sqcup{U^{k}}$ for each $k$. By assumption, each chain group of $A$ is given by a direct sums of shifted flat tangles of through-degree $0$, denoted $A_{i}$. Furthermore, the differentials of $A$ are matrices of linear combinations of chain maps induced by split cobordisms. Let $A_{i}=\bigoplus_{j}q^{k_{j}}\tau_{j}^{i}$, where each $\tau_{j}^{i}$ is a through-degree $0$ tangle as above. Observe that there exist natural cobordism maps $\Sigma_{j}^{i}:\tau_{j}^{i}\otimes{T_{n}^{\otimes{k}}}\rightarrow{\tau_{j}^{i}\sqcup{U^{k}}}$ given simply by a composition of isotopies that slide the belts off of $\tau_{j}^{i}$ (see Figure \ref{fig:greenphone}). These $\Sigma_{j}^{i}$ maps are given by compositions of Reidemeister II moves and are therefore homotopy equivalence maps $\tau_{j}^{i}\otimes{T_{n}^{\otimes{k}}}\simeq{\tau_{j}^{i}\sqcup{U^{k}}}$. Then, letting $\Sigma^{i}:=\bigoplus_{j}\Sigma_{j}^{i}$, these maps are also chain homotopy equivalence maps $A_{i}\otimes{T_{n}^{\otimes{k}}}\simeq{A_{i}\sqcup{U^{k}}}$.

    By Proposition \ref{prop:tot=twistedcomplex}, we have
    \[
        A\otimes{T_{n}^{\otimes{k}}}
        \simeq
        \Tot(
        \{A_{i}\sqcup{U^{k}},g_{i,j}\},
        \qquad
        g_{i,i+1} = \Sigma^{i+1}\circ{(\partial_{i}\otimes{\idmap})}\circ{(\Sigma^{i})^{-1}}
    \]
    where 
     $\partial_{i}: A_i \to A_{i+1}$ is a differential of $A$
    and  $g_{i,j}$ are morphisms of homological degree $j-i-1$
    satisfying Equation \eqref{eq:twisted-condition} of Definition \ref{def:twistedcomplex} (see Figure \ref{fig:AsqcupUk}). Note that if $j-i>1$ for $g_{i,j}:A_{i}\sqcup{U^{k}}\rightarrow{A_{j}\sqcup{U^{k}}}$, then $g_{i,j}$ is the zero map, as $A_{j}\sqcup{U^{k}}$ is itself a chain complex supported only in homological degree $0$ (it is a direct sum of flat tangles). Thus, the twisted complex $\{A_{i}\sqcup{U^{k}},g_{i,j}\}$ is an actual chain complex. 
    {
    Furthermore, by functoriality, $g_{i,i+1}$ and $\partial_{i}\sqcup{\idmap}$ are homotopic maps, since the cobordisms they represent are isotopic.
    }
    {
    In other words, the following diagram homotopy commutes:
    }
    \begin{center}
    \begin{tikzcd}
        A_i \otimes T^{\otimes k}_n \arrow{r}{\partial_i \otimes \idmap} \arrow[swap]{d}{\Sigma^i}
            & A_{i+1} \otimes T^{\otimes k}_n  \arrow{d}{\Sigma^{i+1}} \\
        A_i \sqcup U^k \arrow{r}{\partial_i \sqcup \idmap} 
            & A_{i+1} \sqcup U^{k}\\
    \end{tikzcd}
    \end{center}
    {
    Hence $A\sqcup U^k = (\bigoplus_i A_i \sqcup U_k, \bigoplus_i \partial_i \sqcup \idmap)$ is chain homotopy equivalent to the complex $A \otimes T_n^{\otimes k}$.
    }

    Now let $\Sigma^{(k)}:A\otimes{T_{n}^{\otimes{k}}}\rightarrow{A\sqcup{U^{k}}}$ denote the chain homotopy equivalence map provided by Proposition \ref{prop:tot=twistedcomplex}. 
    Then, the cobordism maps $\Sigma^{(k)}$ commute with the symmetrizing cobordisms and dotted cup cobordisms. We may then define the following directed systems:

    \[
    A\otimes \mathcal{A}^{\alpha}_{n}:=A\otimes{\symmetrized{T_{n}^{\otimes{|\alpha|}}}}\xrightarrow{\idmap\otimes{\symmetrized{\acupdot}}}{A\otimes{\symmetrized{T_{n}^{\otimes{(|\alpha|+2)}}}}}\xrightarrow{\idmap\otimes{\symmetrized{\acupdot}}}{A\otimes{\symmetrized{T_{n}^{\otimes{|\alpha|+4}}}}}\rightarrow{\cdots}
    \]
    \vspace{0.1mm}
    \[
    A\sqcup{\mathcal{A}^{\alpha}_{n}}:=A\sqcup{\symmetrized{T_{n}^{\otimes{|\alpha|}}}}\xrightarrow{\idmap\sqcup{\symmetrized{\acupdot}}}{A\sqcup{\symmetrized{T_{n}^{\otimes{(|\alpha|+2)}}}}}\xrightarrow{\idmap\sqcup{\symmetrized{\acupdot}}}{A\sqcup{\symmetrized{T_{n}^{\otimes{|\alpha|+4}}}}}\rightarrow{\cdots}
    \]
    \vspace{0.3mm}

    Observe also that $A\otimes{T_{n}^{\Omega_{\alpha}}}=\hocolim{(A\otimes{\mathcal{A}^{\alpha}_{n}})}$ and $A\sqcup{U^{\Omega_{\alpha}}}=\hocolim(A\sqcup{\mathcal{A}_{n}^{\alpha}})$. Since we have homotopy equivalence maps $\Sigma^{(m)}$ between each object in our directed systems, by Lemma \ref{lem:forSwapsandSlides}(a), there is a well-defined chain map $\Sigma:A\otimes{T_{n}^{\Omega_{\alpha}}}\rightarrow{A\sqcup{U^{\Omega_{\alpha}}}}$ on homotopy colimits. Furthermore, by Lemma \ref{lem:forSwapsandSlides}(c), we immediately have that $A\otimes{T_{n}^{\Omega_{\alpha}}}\simeq{A\sqcup{U^{\Omega_{\alpha}}}}$, as desired.
\end{proof}

\begin{figure}[!h]
    \centering
        \includegraphics[width=.5\linewidth]{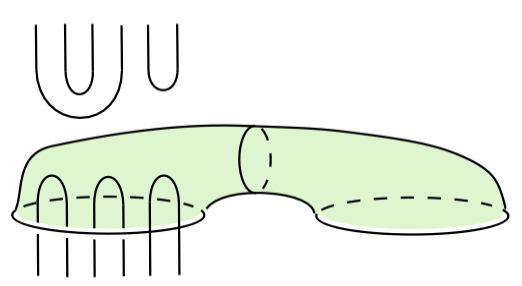}
    \caption{Belt slide-off cobordism from Reidemeister II moves. The split cobordisms $A_{i}\rightarrow{A_{j}}$ between split tangles do not intersect the shaded surface.}
    \label{fig:greenphone}
\end{figure}

A Kirby diagram of $S^{2}\times{B^{2}}$ is the $0$-framed unknot. In \cite{MN22}, the $N=2$ skein lasagna module of $(S^{2}\times{B^{2}},\emptyset)$, equivalent to the cabled Khovanov homology of the $0$-framed unknot, was shown to be isomorphic to $\mathbb{F}[A_{0},A_{0}^{-1},A_{1}]$ for formal variables $A_{0}$ and $A_{1}$ in $q$-degrees $0$ and $-2$ respectively.
At homological level $\alpha$, the skein lasagna module $\skein_{0}^{2}(S^{2}\times{B^{2};\emptyset,\alpha)}$ is isomorphic to the subgroup of $\mathbb{F}[A_{0},A_{0}^{-1},A_{1}]$ generated by homogeneous polynomials of degree $\alpha$. 
Denote this subgroup by $\mathbb{F}_{|\alpha|}[A_{0},A_{0}^{-1},A_{1}]$.

    \begin{figure}[!h]
        \begin{center}
            \includegraphics[width=\linewidth]{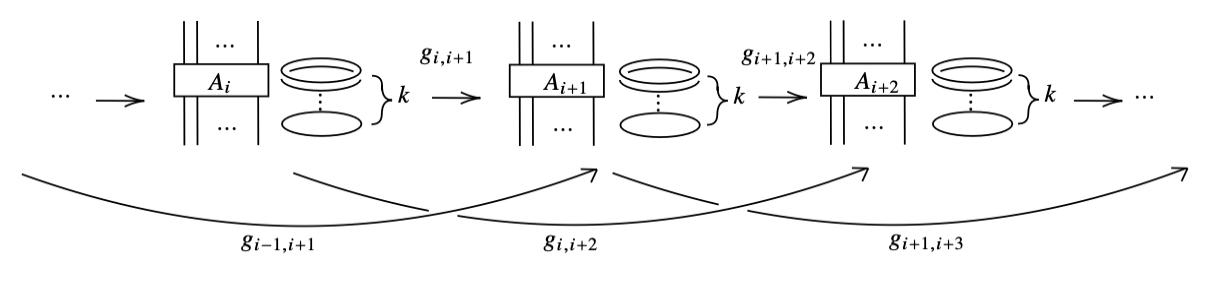}
        \end{center}
        \caption{The twisted complex $\{A_{i}\sqcup{T_{0}^{\otimes{k}}},g_{i,j}$\}}
        \label{fig:AsqcupUk}
    \end{figure}

Theorem \ref{thrm:kirbyslideoff} then has the immediate corollary for pairs $(S^{2}\times{B^{2}},\widetilde{\one}_{n})$ for an even integer $n$.

\begin{corollary}\label{cor:even-evensplit}
    Let $\alpha\in{H_{2}(S^{2}\times{B^{2}};\Z)\cong{\Z}}$ and let $k\in{\mathbb{N}}$. Then there is an isomorphism $\skein_{0}^{2}(S^{2}\times{B^{2}};\widetilde{\one}_{2k},\alpha)\cong{H^{*}(\close(P^{\vee}_{2k,0}))}\otimes{\mathbb{F}_{|\alpha|}[A_{0},A_{0}^{-1},A_{1}]}$.
\end{corollary}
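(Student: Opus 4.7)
The plan is to assemble the earlier machinery --- Proposition \ref{prop:everythingthesame}, the resolution of identity \eqref{eq:resofidentity}, Corollary \ref{cor:PnkkillsTnOmega}, and the slide-off Theorem \ref{thrm:kirbyslideoff} --- into a short chain of homotopy equivalences that peels the Kirby belt off of the identity braid on $2k$ strands, reducing the computation to the trace of the Rozansky projector $P_{2k,0}$ disjoint union with the empty-strand Kirby belt $U^{\Omega_\alpha}$.

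First, Proposition \ref{prop:everythingthesame} identifies $\skein_{0}^{2}(S^{2}\times B^{2};\widetilde{\one}_{2k},\alpha)$ with $H^{*}(\close(T_{2k}^{\Omega_\alpha}))$. Since $2k$ is even, the resolution of identity \eqref{eq:resofidentity} presents $\one_{2k}$ as an iterated mapping cone with layers $P_{2k,0}, P_{2k,2},\ldots, P_{2k,2k-2}, P_{2k}$; tensoring this presentation on the right with $T_{2k}^{\Omega_\alpha}$ (legitimate because $(-)\otimes T_{2k}^{\Omega_\alpha}$ commutes with cones by Proposition \ref{prop:kirbycone}) and using Corollary \ref{cor:PnkkillsTnOmega} to contract every layer $P_{2k,j}\otimes T_{2k}^{\Omega_\alpha}$ with $0<j\leq 2k$, iterated Gaussian elimination collapses the multicone to its bottom layer, yielding $T_{2k}^{\Omega_\alpha}\simeq P_{2k,0}\otimes T_{2k}^{\Omega_\alpha}$. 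Since $P_{2k,0}$ has through-degree $0$ by definition, Theorem \ref{thrm:kirbyslideoff} then produces $P_{2k,0}\otimes T_{2k}^{\Omega_\alpha}\simeq P_{2k,0}\sqcup U^{\Omega_\alpha}$. Applying the trace functor, which carries horizontal disjoint unions to disjoint unions of closed diagrams, and then passing to homology (which is monoidal under disjoint union for $\KhR_2$) gives
$$H^{*}(\close(T_{2k}^{\Omega_\alpha}))\cong H^{*}(\close(P_{2k,0}))\otimes H^{*}(\close(U^{\Omega_\alpha})).$$
A second application of Proposition \ref{prop:everythingthesame} at $n=0$ --- or equivalently the Manolescu--Neithalath identification quoted in the introduction --- identifies $H^{*}(\close(U^{\Omega_\alpha}))$ with $\skein_{0}^{2}(S^{2}\times B^{2};\emptyset,\alpha)\cong \mathbb{F}_{\alpha}[A_0,A_0^{-1},A_1]$, which assembles into the stated isomorphism.

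The main technical obstacle is justifying the iterated cone cancellation in the second step: the resolution of identity is a multicone of complexes $P_{2k,j}$ that are unbounded above, so after tensoring with $T_{2k}^{\Omega_\alpha}$ we are working inside the Ind-completion $\underline{\TLcat}^{\oplus}$. However, Proposition \ref{prop:kirbycone} together with Corollary \ref{cor:PnkkillsTnOmega} lets one peel off the contractible layers $P_{2k,j}\otimes T_{2k}^{\Omega_\alpha}$ ($j>0$) one at a time via Gaussian elimination, so no convergence issue actually arises. A secondary sanity check is that Theorem \ref{thrm:kirbyslideoff} applies to the unbounded-above $P_{2k,0}$; rereading its proof, no finiteness hypothesis on $A$ is used, so this is automatic.
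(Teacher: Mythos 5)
Your proof is correct and follows essentially the same route as the paper's: identify the skein lasagna module with $H^*(\close(T_{2k}^{\Omega_\alpha}))$, apply the resolution of identity together with Corollary \ref{cor:PnkkillsTnOmega} to reduce to $P_{2k,0}\otimes T_{2k}^{\Omega_\alpha}$, invoke the slide-off Theorem \ref{thrm:kirbyslideoff}, and finish by taking trace and homology. The only cosmetic difference is that the paper cites Corollary \ref{cor:everythingthesame(w/braid)} (the braid-decorated version) rather than Proposition \ref{prop:everythingthesame} for the first identification, and your extra paragraph spelling out why the iterated cone cancellation is legitimate in the Ind-completion is a helpful explication of a step the paper leaves implicit (it reuses the argument from Proposition \ref{prop:kirbybeltcontractible}).
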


\begin{proof}
    The skein lasagna module of the pair $(S^{2}\times{B^{2}},\widetilde{\one}_{2k})$ is isomorphic to $H^{*}(\close(T_{2k}^{\Omega_{\alpha}}))$ by Corollary \ref{cor:everythingthesame(w/braid)}. After tensoring $T_{2k}^{\Omega_{\alpha}}$ with the resolution of the identity $\one_{2k}$, we obtain a chain homotopy equivalence
    \[    T_{2k}^{\Omega_{\alpha}}\simeq{P^{\vee}_{2k,0}\otimes{T_{2k}^{\Omega_{\alpha}}}}.
    \]
    However, since $P^{\vee}_{2k,0}$ is a through-degree $0$ complex in $\Kom(\TLcat_{n})$, by Theorem \ref{thrm:kirbyslideoff}, we have that $T_{2k}^{\Omega_{\alpha}}\simeq{P^{\vee}_{2k,0}\sqcup{U^{\Omega_{\alpha}}}}$. Note that $\close(P^{\vee}_{2k,0}\sqcup{U^{\Omega_{\alpha}}})=\close(P^{\vee}_{2k,0})\sqcup{U^{\Omega_{\alpha}}}$, implying
    
    \begin{align*}
    \skein_{0}^{2}(S^{2}\times{B^{2}};\widetilde{\one}_{2k},\alpha)&\cong{H^{*}(\close(P^{\vee}_{2k,0}\otimes{T_{2k}^{\Omega_{\alpha}}}))}\\
    &\cong{H^{*}(\close(P^{\vee}_{2k,0})\sqcup{U^{\Omega_{\alpha}}})}\\
    &\cong{H^{*}(\close(P^{\vee}_{2k,0})})\otimes{\mathbb{F}_{|\alpha|}[A_{0},A_{0}^{-1},A_{1}]}.
    \end{align*}
    proving the claim.
\end{proof}

We can similarly extend the result of Corollary \ref{cor:even-evensplit} to the pair $(S^{2}\times{S^{2}},\emptyset)$, and may now complete the proof of Corollary \ref{cor:main}. 

\begin{proof}[Proof of Corollary \ref{cor:main}]
By Theorem \ref{thrm:S2xS2lasagna}, it remains to show that the skein lasagna module of $S^{2}\times{S^{2}}$ vanishes for $(\alpha_{1},\alpha_{2})\in{H_{2}(S^{2}\times{S^{2})}}$ where both entries are even. Let $\acupdot^{*}$ denote the morphisms on colimits induced by $\acupdot$ and let $\alpha_{1}$ and $\alpha_{2}$ be even integers. The skein lasagna module $\skein^{2}_{0}(S^{2}\times{S^{2}};\emptyset,(\alpha_{1},\alpha_{2}))$ is isomorphic to $\colim(\mathcal{V})\otimes{\mathbb{F}_{|\alpha_{2}|}[A_{0},A_{0}^{-1},A_{1}]}$, where $\mathcal{V}$ is the directed system

\[
\mathcal{V}:=H^{*}(\close(P^{\vee}_{|\alpha_{1}|,0}))\xrightarrow{{\acupdot}^{*}}H^{*}(\close(P^{\vee}_{|\alpha_{1}|+2,0}))\xrightarrow{{\acupdot}^{*}}{H^{*}(\close(P^{\vee}_{|\alpha_{1}|+4,0}))\xrightarrow{{\acupdot}^{*}}\cdots}
\]
However, if $\alpha_{2}$ is taken to be odd instead, we have that $\colim(\mathcal{V})\otimes{\mathbb{F}_{|\alpha_{2}|}}[A_{0},A_{0}^{-1},A_{1}]\cong{0}$, implying that $\colim(\mathcal{V})=0$. Therefore, $\skein_{0}^{2}(S^{2}\times{S^{2}};\emptyset,(\alpha_{1},\alpha_{2}))\cong{0}$ for all $(\alpha_{1},\alpha_{2})\in{H_{2}(S^{2}\times{S^{2}})}$.

\end{proof}


\begin{ack}
We wholeheartedly thank Eugene Gorsky for his continued guidance and encouragement. We are heavily indebted to Matt Hogancamp, Paul Wedrich, Ciprian Manolescu, and Dave Rose for their enlightening ideas and conversations. We thank Qiuyu Ren for their careful reading of a first draft of this article and for pointing out to us the argument at the end of Section \ref{sec:mainsec} which shows that the skein lasagna module of $S^2 \times S^2$ at (even, even) homological levels is also trivial. 
We are also thankful to Shanon Rubin and Trevor Oliveira-Smith for some very helpful conversations. This work was partially supported by the NSF grant  DMS-2302305.
{Finally, we thank the referee for their constructive comments which helped greatly improve the readability of this article.}
\end{ack}

\begin{funding}
This work was partially supported by the U.S. National Science Foundation grant DMS-2302305.
\end{funding}


\bibliographystyle{emss}
\bibliography{main}









\end{document}